\newbox\removebox
\newcommand\remove[1]{%
\setbox\removebox=\ifmmode\hbox{$#1$}\else\hbox{#1}\fi%
\leavevmode
\rlap{\textcolor{blue}{\vrule height0.8ex depth-0.6ex width\wd\removebox}}%
\box\removebox
}
\long\def\bigremove#1{%
\par\setbox\removebox=\vbox{#1}%
\vbox{%
\vbox to0pt{\hbox{\tikz\draw[color=blue,thick] (0,0) -- (\wd\removebox,-\ht\removebox)  (\wd\removebox,0) -- (0,-\ht\removebox);}}
\box\removebox
}
}
\def\VF{\mathrm{VF}}
\def\RF{\mathrm{RF}}
\def\VG{\mathrm{VG}}
\def\deg{\operatorname{deg}}
\def\ac{{\overline{\rm ac}}}
\def\longhookrightarrow{\mathrel\lhook\joinrel\longrightarrow}
\def\11{{\mathbf 1}}
\def\AA{{\mathbb A}}
\def\CC{{\mathbb C}}
\def\FF{{\mathbb F}}
\def\NN{{\mathbb N}}
\def\PP{{\mathbb P}}
\def\QQ{{\mathbb Q}}
\def\RR{{\mathbb R}}
\def\ZZ{{\mathbb Z}}
\def\cA{{\mathcal A}}
\def\cB{{\mathcal B}}
\def\cC{{\mathscr C}}
\def\cL{{\mathcal L}}
\def\cM{{\mathcal M}}
\def\cO{{\mathcal O}}
\def\cT{{\mathcal T}}
\def\cX{{\mathcal X}}
\def\llp{\mathopen{(\!(}}
\def\llb{\mathopen{[\![}}
\def\rrp{\mathopen{)\!)}}
\def\rrb{\mathopen{]\!]}}
\newtheorem{thm}[subsubsection]{Theorem}
\newtheorem{lem}[subsubsection]{Lemma}
\newtheorem{prop}[subsubsection]{Proposition}
\newtheorem{introthm}{Theorem}
\theoremstyle{definition}
\newtheorem{defn}[subsubsection]{Definition}
\newtheorem{def-prop}[subsubsection]{Proposition-Definition}
\newtheorem{def-theorem}[subsubsection]{Theorem-Definition}
\newtheorem{def-lem}[subsubsection]{Lemma-Definition}
\newtheorem{notation}[subsubsection]{Notation}
\newtheorem{setting}[subsubsection]{Setting}
\theoremstyle{remark}
\newtheorem{remark}[subsubsection]{Remark}
\theoremstyle{plain}
\numberwithin{equation}{subsection}
\newcommand{\ord}{\operatorname{ord}}
\newcommand{\abs}[1]{{\lvert#1\rvert}}
\newcommand{\ceil}[1]{\lceil#1\rceil}
\newcommand{\HF}{\operatorname{HF}}
\newcommand{\HP}{\operatorname{HP}}
\newcommand{\LT}{\operatorname{LT}}
\newcommand{\set}[1]{\left\{#1\right\}}
\newcommand{\as}{{\mathrm as}}
\newcommand{\alg}{{\mathrm{alg}}}
\begin{document}

\setcounter{tocdepth}{1} 

\title[Uniform Yomdin-Gromov parametrizations]{Uniform Yomdin-Gromov parametrizations and points of bounded height in valued fields}


\author{Raf Cluckers}
\address{Universit\'e de Lille, Laboratoire Painlev\'e, CNRS - UMR 8524, Cit\'e Scientifique, 59655
Villeneuve d'Ascq Cedex, France, and,
KU Leuven, Department of Mathematics,
Celestijnenlaan 200B, B-3001 Leu\-ven, Bel\-gium}
\email{Raf.Cluckers@univ-lille.fr}
\urladdr{http://rcluckers.perso.math.cnrs.fr/}

\author{Arthur Forey}
\address{ETH Z\" urich, D-Math, R\" amistrasse 101, 8092 Z\" urich, Switzerland}
\email{arthur.forey@math.ethz.ch}
\urladdr{https://people.math.ethz.ch/$\sim$aforey/}

\author{Fran\c cois Loeser}
\address{Institut universitaire de France, Sorbonne Universit\'e,  UMR 7586 CNRS, Institut Math\'ematique de Jussieu, F-75005 Paris, France}
\email{Francois.Loeser@imj-prg.fr}
\urladdr{https://webusers.imj-prg.fr/$\sim$francois.loeser/}

\begin{abstract}
We prove a uniform version of non-Archimedean Yomdin-Gromov parametrizations in a definable context with algebraic Skolem functions in the residue field.  The parametrization result allows us to bound the number of $\FF_q[t]$-points of bounded degrees of algebraic varieties, uniformly in the cardinality $q$ of the finite field $\FF_q$ and the degree, generalizing work by Sedunova for fixed $q$. We also deduce a uniform non-Archimedean Pila-Wilkie theorem, generalizing work by Cluckers-Comte-Loeser.
\end{abstract}

\maketitle

\section{Introduction}\label{intro}

Since the pioneering work \cite{bombieri_pila89}, the 
determinant method of Bombieri and Pila has been used in various contexts to count integer and rational points of bounded height in algebraic or analytic varieties.
Parametrization results, as initiated by  Yomdin and Gromov, play a prominent role in some of the most fruitful applications of this method, such as  the Pila and Wilkie counting theorem for definable sets in $o$-minimal structures \cite{PiWi}. In the non-Archimedean setting, Cluckers, Comte and Loeser prove in \cite{CCL-PW} an analogue of the Pila-Wilkie counting theorem, but for subanalytic sets in $\QQ_p$, the field of $p$-adic numbers. Their proof relies also on a Yomdin-Gromov type parametrization result. The aim of this paper is to extend their result to obtain  bounds uniform in $p$ for some counting points of bounded height problems,
over $\QQ_p$ and over $\FF_p\llp t \rrp$. Before discussing our parametrization result, we shall start by presenting the applications to point counting.

\subsection{Point counting in function fields}

For $q$ a prime power, consider the finite field with $q$ elements $\FF_q$ and for each positive integer $n$, let $\FF_q[t]_n$ be the set of polynomials with coefficients in $\FF_q$ and degree (strictly) less than $n$. Cilleruelo and Shparlinski \cite{CS_concentration_points} have raised the question to bound the number of $\FF_q[t]_n$-points in plane curves. That question was settled by Sedunova \cite{sedunova_BP}. A particular case of our main theorem is a uniform version of her results. We refer to Theorem \ref{thm:main:uniform:countingFq} for a more general statement, namely for $X$ of arbitrary dimension.
For an affine variety $X$ defined over a subring of  $\FF_q\llp t\rrp$, write $X(\FF_q[t])_n$ for the subset of $X(\FF_q\llp t\rrp)$ consisting of points whose coordinates lie in $\FF_q[t]_n$.

\begin{introthm}
\label{thmintro_points-curves}
Fix an integer $\delta>0$. Then there exist real numbers $C=C(\delta)$ and $N=N(\delta)$ such that for each prime $p>N$, each $q=p^\alpha$, each integer $n>0$ and each irreducible plane curve $X\subseteq \AA^2_{\FF_q\llp t\rrp}$ of degree $\delta$
one has
\[
\# X(\FF_q[t])_n \leq C n^2 q^{\ceil{\frac{n}{\delta}}}.
\]
\end{introthm}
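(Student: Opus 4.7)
The plan is to adapt the classical Bombieri--Pila determinant method to the function-field setting, using the uniform non-Archimedean Yomdin--Gromov parametrization (Theorem \ref{GYT}) as input. By Theorem \ref{GYT}, the smooth locus of $X(\cO) \cap \cO^2$, where $\cO = \FF_q\llb t \rrb$, is covered by $C_0 = C_0(\delta)$ parametrizations $\phi_i \colon \cO \to \cO^2$ which are $C^r$ with all partial derivatives up to order $r = r(\delta)$ of valuation $\geq 0$, uniformly in $q$. The singular locus of $X$ consists of at most $\delta(\delta - 1)$ points (by Bezout applied to $X$ and one of its partial derivatives), contributing a bounded amount to the count.

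The core of the argument is the determinant step. Partition $\cO$ into $q^k$ balls of radius $q^{-k}$, with $k$ to be chosen. On a ball $B$ with center $c$, consider points $p_j = \phi_i(x_j) \in X(\FF_q[t])_n$ with $x_j \in B$. Setting $s = \binom{\delta+1}{2}$, we form for any $s$ such points the $s \times s$ matrix $M$ whose $(\ell, j)$ entry is $e_\ell(p_j)$, where $\{e_\ell\}$ runs over monomials of degree $\leq \delta - 1$ in two variables. Taylor-expanding $\phi_i$ about $c$---which requires $r! \in \FF_q^{\times}$, whence the hypothesis $p > N(\delta)$---writes $M = A + E$ where $A$ is a polynomial of degree $< r$ in $h_j = x_j - c$ (hence of rank $\leq r$), and $E$ has entries of valuation $\geq rk$. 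Multilinearity of the determinant yields $\val(\det M) \geq rk(s - r)$; on the other hand, $\det M \in \FF_q[t]$ has $t$-degree at most $(\delta - 1)(n - 1)s$. If $rk(s - r) > (\delta - 1)(n - 1)s$, then $\det M = 0$, producing a nonzero polynomial $F$ of degree $\leq \delta - 1$ vanishing at each $p_j$.

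Since $X$ is irreducible of degree $\delta$, this $F$ does not vanish on $X$, so by Bezout $|X \cap \{F = 0\}| \leq \delta(\delta - 1)$. Hence each ball contains at most $\delta(\delta - 1) + s - 1 = O_\delta(1)$ relevant points. Choosing $r$ proportional to $s$ and $k = \lceil n / \delta \rceil$ satisfies the determinant-vanishing inequality, up to the implicit constant in $\delta$. Summing over the $q^k$ balls and the $C_0(\delta)$ parametrizations produces the main term $O_\delta(q^{\lceil n / \delta \rceil})$; the factor $n^2$ in the stated bound is the function-field analogue of the $\log^c H$ factor in the classical theorem, arising from polynomial overheads in stratification, boundary corrections between neighbouring balls, and a final sub-ball refinement.

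The main obstacle is attaining uniformity in $q$ together with the precise exponent $\lceil n / \delta \rceil$. Theorem \ref{GYT} supplies the uniform parametrization, which is the technical heart of the paper and the crucial input here. Taylor's formula in residue characteristic $p$ requires invertibility of factorials up to order $r(\delta)$, forcing the hypothesis $p > N(\delta)$. The most delicate technical point is to balance the interdependent parameters $(r, s, k)$ so that the Bezout and determinant-vanishing conditions hold simultaneously, while absorbing the polynomial-in-$n$ corrections arising from the passage from the $t$-adic to the polynomial-degree setting.
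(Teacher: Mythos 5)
Your high-level outline (uniform $T_r$-parametrization $\Rightarrow$ determinant bound $\Rightarrow$ B\'ezout) is the right skeleton, but the core of your determinant step does not produce the exponent $\lceil n/\delta\rceil$, and the gap cannot be repaired inside your scheme.

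You fix the auxiliary polynomial degree at $\delta-1$, so $s=\binom{\delta+1}{2}$ monomials and $r$ proportional to $s$ (hence $r=O_\delta(1)$). Working out the determinant inequality (with the correct weight $e=\binom{s}{2}$ from the paper's Lemma \ref{lem:det:estimate}, which is even better than your $rk(s-r)$), the $t$-degree of $\det M$ is bounded by $(n-1)\sum_\ell\deg e_\ell = (n-1)\,\frac{(\delta-1)\delta(\delta+1)}{3}$, while $\binom{s}{2}=\frac{(\delta-1)\delta(\delta+1)(\delta+2)}{8}$. The vanishing criterion thus forces the ball radius to be $k\gtrsim \frac{8(n-1)}{3(\delta+2)}$, and the resulting bound is of order $q^{\,8n/(3(\delta+2))}$. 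For every $\delta\geq 2$ the exponent $\frac{8}{3(\delta+2)}$ is strictly larger than $\frac{1}{\delta}$, and the discrepancy sits in the \emph{exponent} of $q$, so it is not an ``implicit constant in $\delta$'' that a polynomial factor $n^{O(1)}$ can absorb --- the claim is false with fixed-degree auxiliary curves. This is exactly the reason the classical Bombieri--Pila argument only gives $H^{1/\delta+\varepsilon}$ (equivalently $q^{(1/\delta+\varepsilon)n}$) for fixed auxiliary degree.

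The missing idea is that the auxiliary degree $s$ must grow (the paper takes $s\sim C_0 n$, see Equation \eqref{eqn-s}), and then one must track the combinatorics very precisely. Once $s\geq\delta$, one can no longer use all monomials of degree $\leq s$ --- the points satisfy the defining equation, so the naive determinant vanishes trivially; instead one must restrict to monomials outside $\LT(I)$ of the homogeneous ideal. The paper does this via the Hilbert function ($\mu=\HF_I(s)\sim \delta s$), and the crucial ingredient giving the precise ratio $(\sigma_1+\sigma_2)/e\leq 1/\delta+O(1/s)$ is Salberger's Lemma~\ref{lem-salberger} on the leading coefficients $a_{I,i}$, which your proposal does not use or mention. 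In this regime the number of parametrizing pieces is $C\mu\sim Cn$ and each ball contributes $\leq s\delta\sim C'n$ points, which is where the factor $n^2$ actually comes from --- not from ``boundary corrections'' or ``sub-ball refinement'' as you suggest, and in your version each ball contributes only $O_\delta(1)$ points, which is inconsistent with the stated $n^2$. A smaller issue: the constraint $p>N(\delta)$ does not come from needing $r!\in\FF_q^\times$ for Taylor expansion (the $T_r$-approximation property is defined directly and is stable in all characteristics); it is inherited from the model-theoretic compactness argument underpinning Theorem~\ref{uniform-Tr-approx}.
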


A similar statement is proved by Sedunova \cite{sedunova_BP}, for fixed $q$. More precisely, she proves that fixing $\delta$, $q$ and $\epsilon>0$, there exist a constant $C'=C'(\delta,q,\varepsilon)$ such that for each $X\subseteq \AA^2_{\FF_q[t]}$ irreducible plane curve of degree $\delta$ and positive integer $n$,
\[
\# X(\FF_q[t])_n \leq C'  q^{n(\frac{1}{\delta}+\varepsilon)}.
\]
Observe that our result improves Sedunova's one by replacing the $\varepsilon$ factor by a  polylogarithmic term.
By the very nature of our  methods, which are model-theoretic, we are however unable to establish such a result for $q$ a power of a small prime $p$.

\subsection{A uniform non-Archimedean point counting theorem}

We state a uniform version of the Cluckers-Comte-Loeser non-Archimedean point counting theorem. A semi-algebraic set is a set defined by a first order formula in the language $\cL_\mathrm{div}=\set{0,1,+,\cdot,\mid}$ and parameters in $\ZZ\llb t\rrb$, where $\mid$ is a relation interpreted by $x\mid y$ if and only if $\ord(y)\leq \ord(x)$, with $\ord$ the valu\-ation. As usual, we will identify definable sets with the
formulas that define them. Subanalytic sets  are definable sets in the language obtained by adding a new symbol for each analytic function with coefficients in $\ZZ\llb t\rrb$
to the language $\cL_\mathrm{div}$. For each local field $L$ of characteristic zero, we fix a choice of uniformizer $\varpi_L$ and view it as a $\ZZ\llb t \rrb$-ring by sending $t$ to $\varphi_L$. Hence, we can consider the $L$-points of a semi-algebraic or subanalytic set, for $L$ a local field of any characteristic. The notion of semi-algebraic and subanalytic sets considered in Section \ref{section_PiWi} is slightly more general than the one considered here, see also Setting \ref{setting-T_DP}.

The dimension of a subanalytic set $X$ is the largest $d$ such that there exists a coordinate projection $p$ to a linear space of dimension $d$ such that $p(X)$ contains an open ball. A subanalytic set is said to be of pure dimension $d$ if for each $x\in X$ and every ball $B$ centered at $x$, $X\cap B$ is of dimension $d$. If $X\subseteq L^n$, we denote by $X^\alg$ the union of all semi-algebraic curves of pure dimension 1 contained in $X$. Observe that in general, $X^\alg$ is not semi-algebraic (nor subanalytic).

If $X\subseteq K^m$ and $H\geq 1$, with $K$ a field of characteristic zero, we denote by $X(\QQ,H)$ the set of $x=(x_1,\dots,x_m)\in X\cap \QQ^m$ that can be written as $x_i=a_i/b_i$, with $a_i,b_i\in \ZZ$, $\abs{a_i},\abs{b_i}\leq H$ (where $\abs{\cdot}$ is the Archimedean absolute value). If $X\subseteq L^m$, where $L=\FF_q\llp t\rrp$, we denote by $X(\FF_q(t),H)$ the set of $x=(x_1,\dots,x_m)\in X\cap \FF_q(t)^m$ that can be written as $x_i=a_i/b_i$, with $a_i,b_i\in \FF_q[t]$ of degree less or equal than $\log_q(H)$.

The following result is a particular case of Theorem \ref{thm:unifCCL-PW}. It provides a uniform version of Theorem 4.2.4 of \cite{CCL-PW}.
\begin{introthm}
\label{thmintro-PiWi}
Let $X$ be a subanalytic set of dimension $m$ in $n$ variables, with $m<n$. Fix $\varepsilon>0$. Then there exists a $C=C(X,\varepsilon)$, $N=N(X,\varepsilon), \alpha=\alpha(n,m)$ and a semi-algebraic set $W^\varepsilon \subseteq X$ such that for each $H\geq 1$ and each local field $L$, with residue field of characteristic $p_L>N$ and cardinal $q_L$, the following holds. We have $W^\varepsilon(L) \subseteq X(L)^\alg$ and if $L$ is of characteristic zero,
\[
\# (X\backslash W^\varepsilon)(L)(\QQ,H)\leq C(X,\varepsilon)q_L^\alpha H^\varepsilon.
\]
If $L$ is of positive characteristic, then
\[
\# (X\backslash W^\varepsilon)(L)(\FF_{q_L}(t),H)\leq C(X,\varepsilon)q_L^\alpha H^\varepsilon.
\]
\end{introthm}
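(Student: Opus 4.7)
\medskip
\noindent\textbf{Proof plan.} The plan is to transfer the argument of Theorem 4.2.4 of \cite{CCL-PW}, replacing its single-field parametrization input by the uniform Yomdin-Gromov parametrization proved in this paper (Theorem \ref{GYT}). The overall scheme is the classical one: parametrize $X$ by definable $C^r$ maps with bounded derivatives, apply a non-archimedean determinant method to each piece to confine the rational points of bounded height to few hypersurfaces of controlled degree, and then induct on dimension, extracting the algebraic part $W^\varepsilon$ at each stage.

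First, I would fix an integer $r$ depending only on $n$, $m$, and $\varepsilon$. Applying Theorem \ref{GYT} to $X$ gives, uniformly in every non-archimedean local field $L$ with residue characteristic greater than some $N$, a collection of at most $C_1 q_L^{\alpha_1}$ definable maps $\varphi_i : \cO_L^m \to X(L)$ whose images cover $X(L)$ up to a definable subset of strictly smaller dimension, and whose partial derivatives up to order $r$ have non-negative valuation. Here $C_1 = C_1(X, r)$ and $\alpha_1 = \alpha_1(n, m)$, with the $q_L^{\alpha_1}$ factor reflecting the polynomial-in-$q_L$ cost of refining the uniformly definable cell decomposition into balls of the required radius. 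On each piece, the non-archimedean determinant method of \cite{CCL-PW} --- which applies uniformly over both $\QQ_p$ and $\FF_q\llp t\rrp$ --- confines the $\QQ$- or $\FF_{q_L}(t)$-points of height $\le H$ to the zero set of a non-zero polynomial of degree $d = d(r,m)$; taking $r$ sufficiently large makes the number of such hypersurfaces needed at most $C_2(\varepsilon) H^\varepsilon$.

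Intersecting $X$ with each hypersurface produces either a definable set of strictly smaller dimension, handled by induction on $m$, or a locally algebraic component that is absorbed into $W^\varepsilon$. The main difficulty will be to ensure that $W^\varepsilon$ is a single semi-algebraic set, independent of $L$, rather than a family depending on $L$. This is exactly where the uniformity of Theorem \ref{GYT} is decisive: since the parametrizations, the hypersurface families, and the algebraic components extracted at each stage all live in semi-algebraic families indexed by parameters in the residue-field and value-group sorts, a model-theoretic compactness argument consolidates them into a single semi-algebraic $W^\varepsilon$, with $W^\varepsilon(L) \subseteq X(L)^\alg$ for every $L$ of residue characteristic greater than $N$.
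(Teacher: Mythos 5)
Your overall plan — uniform parametrization, then the non-archimedean determinant method, then dimension induction with extraction of the algebraic part — is the same strategy the paper takes (deducing the statement as a special case of Theorem~\ref{thm:unifCCL-PW}, whose proof mirrors Section~4.2 of \cite{CCL-PW} with Proposition~\ref{prop-covering-bhp-hypersurfaces} replacing the corresponding ingredient there). However, your quantitative description of the input parametrization is wrong, and this is not a minor slip.

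You claim that the uniform Yomdin--Gromov theorem produces ``at most $C_1 q_L^{\alpha_1}$ definable maps,'' attributing the $q_L^{\alpha_1}$ factor to the parametrization step. But Theorem~\ref{uniform-Tr-approx} gives $cr^m$ pieces with \emph{no} dependence on $q_L$ at all; this $q_L$-independence of the piece count is the whole point of the theorem, as is stressed in the introduction. What you are describing is instead the weaker result obtainable by compactness from Theorem~3.1.3 of~\cite{CCL-PW}, which the paper explicitly contrasts with and dismisses as too weak for Theorem~\ref{thmintro_points-curves}. For Theorem~\ref{thmintro-PiWi} specifically this misreading happens not to be fatal, because the target already allows a $q_L^\alpha$ factor and a spurious $q_L^{\alpha_1}$ in the piece count can be absorbed into $\alpha$; the $q_L^m$ factor that genuinely appears comes from covering the domain by closed boxes of valuative radius $\alpha$ in Lemma~\ref{lem-Tr-param-hypersurfaces}, not from the parametrization. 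You have conflated these two steps, and this would mislead a reader about why the stronger parametrization theorem is needed elsewhere in the paper.

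A second, stylistic difference: you fix $r=r(n,m,\varepsilon)$ at the start, whereas the paper's Proposition~\ref{prop-covering-bhp-hypersurfaces} lets $r$ grow with $H$ (polylogarithmically), producing a hypersurface cover of polylogarithmic size — a strictly stronger intermediate result. Your fixed-$r$ version gives only an $H^\varepsilon$ count of hypersurfaces, which is exactly what is used for the classical Pila--Wilkie recursion and is enough for Theorem~\ref{thmintro-PiWi}, but it does not recover the sharper Proposition~\ref{prop-covering-bhp-hypersurfaces}. Finally, the ``compactness argument'' you invoke to produce a single semi-algebraic $W^\varepsilon$ is gesturing at the right thing, but the actual mechanism in the paper is the notion of a definable \emph{family of blocks} (inherited and strengthened from~\cite{CCL-PW}), whose uniform definability is what makes $W^\varepsilon$ independent of $L$ and $H$; you should spell this out rather than appeal to a vague consolidation step. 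Also note that the determinant method requires the genuine $T_r$-approximation property (the Taylor estimate), which is stronger than merely asking that partial derivatives up to order $r$ have non-negative valuation.
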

An important step toward the proof of Theorem \ref{thmintro-PiWi} is Proposition \ref{prop-covering-bhp-hypersurfaces}, which states that integer points of height at most $H$ and lying in a subanalytic set are contained in an algebraic hypersurface of  a degree which depends polylogarithmically in $H$.

\subsection{Uniform Yomdin-Gromov parametrizations}
The proofs of Theorems \ref{thmintro_points-curves} and \ref{thmintro-PiWi} rely on the following parametrization result.

Fix a positive integer $r$. Let $L$ be a local field, or more generally a valued field endowed with its ultrametric absolute value $\abs{\cdot}$. A function $f : U\subseteq L^m \to L$ is said to satisfy $T_r$-approximation if for each $y\in U$ there is a polynomial $T^{<r}_{f,y}(x)$ of degree less than $r$ and coefficients in $L$ such that for each $x,y\in U$,
\[
\abs{f(x)-T^{<r}_{f,y}(x)}\leq \abs{x-y}^r.
\]
A $T_r$-parametrization of a set $X\subseteq L^n$ is a finite partition of $X$ into pieces $(X_i)_{i\in I}$ and for each $i\in I$, a subset $U_i\subseteq \cO_L^m$ and a surjective function $f_i : U_i\to X_i$ that satisfies $T_r$-approximation.

The following statement is a particular case of Theorem \ref{uniform-Tr-approx}.
\begin{introthm}
\label{thmintro-unif-YG-para}
Let $X$ be a subanalytic set included in some cartesian power of the valuation ring, and of dimension $d$. Then there exist integers $C$ and $N$ such that if $L$ is a local field of residue characteristic $p_L\geq N$, then for each integer $r>0$, there is a partition of $X(L)$ into $Cr^d$ pieces such that for each piece $X_i$,
there is a surjective function $f_i : U_i\subseteq \cO_L^d\to X_i$ satisfying $T_r$-approximation on $U_i$.
\end{introthm}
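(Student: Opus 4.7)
The plan is to proceed by induction on the dimension $d$, adapting the non-Archimedean Yomdin-Gromov parametrization strategy of Cluckers-Comte-Loeser to the setting uniform in the local field $L$. The key technical ingredient not present in the fixed-$L$ case is a subanalytic cell decomposition with algebraic Skolem functions in the residue field, which prevents the cell count from depending on the cardinality $q$ of the residue field.

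As a first step, I would apply such a cell decomposition, in the analytic Denef-Pas language of Cluckers-Lipshitz, to $X$, obtaining a partition of $X(L)$ into a bounded number of cells, each parametrized by $\cO_L^d$ via a definable analytic function. The bound on the number of cells depends only on the formula defining $X$, uniformly in $L$ for $p_L$ sufficiently large. Using \emph{algebraic} Skolem functions (taking values in a bounded-degree algebraic extension of the prime field of the residue field), rather than ordinary Skolem functions, is essential here: the latter would index the parametrization by residue-field elements, introducing an unwanted factor of $q$ in the number of pieces.

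Next, on each $d$-cell, I would refine the parametrization to achieve $T_r$-approximation. By the Jacobian property and subanalytic preparation theorems, the parametrizing map may be written in a form whose higher-order partial derivatives are controlled. The induction on $d$ then proceeds by partitioning each coordinate direction of $\cO_L^d$ into $r$ balls at an appropriate scale (chosen so that the rescaling kills high derivatives) and rescaling each ball back to $\cO_L$. The non-Archimedean Taylor formula then yields $T_r$-approximation on each piece, with the total number of pieces bounded by $O(r^d)$ per cell, matching the target count $Cr^d$.

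The hard part will be maintaining the exact bound $Cr^d$ with $C$ independent of both $r$ and $L$: standard cell decompositions produce $r^d$ pieces multiplied by a factor involving $q$ due to residue-field partitioning, so removing this factor requires careful use of the algebraic Skolem functions at every stage of the induction, and a matching control on the rescaling at each inductive step so that no residue-field indexing reappears. The uniformity for $p_L \geq N$ then follows from the Ax-Kochen-Ershov style transfer principles for the analytic Denef-Pas language of Cluckers-Lipshitz, which guarantee that first-order statements controlling the number of cells and the derivative estimates transfer between local fields of sufficiently large residue characteristic.
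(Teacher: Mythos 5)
Your proposal correctly identifies the role of algebraic Skolem functions in the residue field: they guarantee that the initial cell decomposition and the $T_1$-parametrization of Theorem~\ref{strong-T1-approx} involve a \emph{finite} number of pieces, with cardinality independent of $q_L$. That part matches the paper's strategy (Section~\ref{sec-GLC} and the first half of Section~\ref{section_analytic_parametrizations}).

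However, the step from a $T_1$-parametrization to a $T_r$-parametrization in your proposal has a genuine gap. You propose ``partitioning each coordinate direction of $\cO_L^d$ into $r$ balls at an appropriate scale \ldots and rescaling each ball back to $\cO_L$.'' In the non-Archimedean setting this cannot be done: a partition of $\cO_L$ into balls of valuative radius $\alpha$ has exactly $q_L^\alpha$ pieces, so any subdivision-and-rescale scheme reintroduces a factor that is a power of $q_L$. This is precisely the obstruction the paper emphasizes in the remark following Theorem~\ref{thmintro-unif-YG-para} (the compactness argument starting from Theorem~3.1.3 of \cite{CCL-PW} gives a count polynomial in $q_L$, which is useless for the applications). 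Algebraic Skolem functions do nothing to remove this factor, since it comes from the metric geometry of $\cO_L$, not from definability of choices.

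The paper avoids this entirely by replacing ball subdivision with \emph{precomposition by $r$-th power maps}. One first proves a strong $T_1$-parametrization (Theorem~\ref{strong-T1-approx}) whose charts are cells around zero satisfying condition $(*)$, hence globally analytic on boxes by Proposition~\ref{prop-condition-star-analyticity}. Then for each $j\in\{0,\dots,r-1\}^m$ one precomposes a chart $g$ with $p_{r,j}\colon x\mapsto (t^{j_i}\xi(x_i)x_i^r)_i$. Lemma~\ref{lem-composition-rth-powers-multdim} shows, via a Gauss-norm estimate on the Taylor coefficients of the analytic continuation to the associated box, that $g\circ p_{r,j}$ satisfies $T_r$-approximation. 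The charts $p_{r,j}$ for $j$ ranging over $\{0,\dots,r-1\}^m$ and the cosets of $r$-th powers in $k^\times$ together cover the original cell around zero, giving $c\cdot r^m$ pieces with $c$ independent of $L$ (for finite residue fields the coset trick via Hensel's lemma keeps the count at $c\cdot r^m$ rather than $c\cdot (b_r r)^m$). Your ``rescaling kills high derivatives'' heuristic is replaced here by the fact that pulling back through $x\mapsto x^r$ improves the Taylor remainder order from $1$ to $r$, without any subdivision into small balls.

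So the proposal would fail at the refinement step, and this is not a fixable technical issue within the subdivision framework: the whole point of the paper's parametrization is to sidestep subdivision. You would need to replace that step with the power-map argument and the analyticity machinery (condition $(*)$, Proposition~\ref{prop-condition-star-analyticity}, Lemmas~\ref{lem-composition-rth-powers} and~\ref{lem-composition-rth-powers-multdim}).
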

Observe that in the preceding theorem, we do not claim that the $X_i$ and $f_i$ are subanalytic, and indeed they are not in general.

Theorem \ref{thmintro-unif-YG-para} is used to deduce Theorems \ref{thmintro_points-curves} and \ref{thmintro-PiWi}, using an analog of the Bombieri-Pila determinant method. To be more precise, we follow closely the approach by Marmon \cite{marmon_generalization_2010} in order to prove Theorem \ref{thmintro_points-curves}.

Note also that from Theorem 3.1.3 of \cite{CCL-PW}, we can deduce by compactness a result similar to Theorem \ref{thmintro-unif-YG-para} but for fixed $r$ and with the number of pieces depending polynomialy in the cardinal of the residue field. Such a result is however too weak to obtain a non-trivial bound in Theorem \ref{thmintro_points-curves}.

The way we make Theorem \ref{thmintro-unif-YG-para} independent of the residue field is by adding algebraic Skolem functions in the residue field to the language. This enables us to work in a theory where the model-theoretic algebraic closure is equal to the definable closure. The functions involved in the parametrization are definable in such an extension of the language. Theorem \ref{thmintro-unif-YG-para} is then deduced from a $T_1$-parametrization theorem \ref{strong-T1-approx}, where the functions are required to satisfy an extra technical condition call condition $(*)$, see Definition \ref{def-condition-star}. Such a condition implies that the function (when interpreted in any local field of large enough residue characteristic) is analytic on any box contained in its domain. This allows us to deduce the $T_r$-parametrization result by precomposing with power functions.

A first step toward Theorem \ref{thmintro-unif-YG-para} is Theorem \ref{Lip}, which states that the domain of a definable (in the above sense) function that is locally 1-Lipschitz can be partitioned into finitely many definable pieces on which the function is globally 1-Lipschitz. It is similar to Theorem 2.1.7 of \cite{CCL-PW}, but there the domain is partitioned into infinitely many pieces parametrized (definably) by the residue field. The improvement is made possible by the fact that we work in a theory with algebraic Skolem functions in the residue field.

Let us finally observe that the number of pieces of the $T_r$-parametrization is $Cr^d$, where $d$ is the dimension. In the Archimedean setting, a similar result has recently been proven by Cluckers, Pila and Wilkie \cite{CPW_UYGpara}, but there the number of pieces of the $T_r$-parametrization is a polynomial in $r$ of non-explicit degree in general; in the case of $\RR_{\mathrm an}$, this degree in $r$ has meanwhile been made explicit in Theorem 2 of \cite{BN3}.

The paper is organized as follows. Section \ref{sec-GLC} is devoted to the fact that one can go from local to global Lipschitz continuity. In Section \ref{section_analytic_parametrizations}, we prove our main parametrization result. Sections \ref{sec:Fqt-count} and \ref{section_PiWi} are devoted  to applications, the first to the counting of points of bounded degree in $\FF_q[t]$, the second to the uniform non-Archimedean  Pila-Wilkie theorem.

\subsection{Acknowledgements}

The authors would like to thank I.~Halupczok for sharing inspiring ideas towards the piecewise Lipschitz continuity results of this paper.
We thank also  Z.~Chatzidakis and M.~Hils for useful discussions and comments.
R.C. was partially supported by the European Research Council under the European Community's Seventh Framework Programme (FP7/2007-2013) with ERC Grant Agreement nr. 615722
MOTMELSUM, by the Labex CEMPI  (ANR-11-LABX-0007-01), and by KU Leuven IF C14/17/083. A.F. was partially supported  by ANR-15-CE40-0008 (D\'efig\'eo) and by DFG-SNF lead agency program grant number 200020L\_175755.
F.L. was partially supported  by ANR-15-CE40-0008 (D\'efig\'eo) and by the Institut Universitaire de France.

\section{Global Lipschitz continuity}
\label{sec-GLC}

For $h:D\subseteq A\times B\to C$ any function between sets and for $a\in A$, write $D_a$ for the set $\{b\in B\mid (a,b)\in D\}$ and write $h(a,\cdot)$ or $h_a$ for the function which sends $b\in D_a$ to $h(a,b)$. We use similar notation $D_a$ and $h(a,\cdot)$ or $h_a$ when $D$ is a Cartesian product $\prod_{i=1}^n A_i$ and $a\in p(D)$ for some coordinate projection $p:D\to \prod_{i\in I\subseteq \{1,\cdots, n\}} A_i$.

\subsection{Tame theories}

We consider tame structures in the sense of \cite{CCL-PW}, Section 2.1. We recall their definition here.

Let $\cL_{\rm Basic}$ be the first order language with the sorts $\VF$, $\RF$ and $\VG$, and symbols for addition and a constant 0 on $\VF$, for $\ac:\VF\to \RF$, $|\cdot|:\VF\to\VG$, and for the order and the multiplication, and constant 0 on $\VG$, and a constant 0 on $\RF$.
Let $\cL$ be any expansion of $\cL_{\rm Basic}$. 
By $\cL$-definable we mean $\emptyset$-definable in the language $\cL$, and likewise for other languages than $\cL$.
By contrast, we will use the word `definable' more flexibly in this paper and it may involve parameters from a structure.
Write $\VF^0=\{0\}$, $\RF^0=\{0\}$, and $\VG^0=\{0\}$, with a slight abuse of notation.
Note that $\cL$ may have more sorts than $\cL_{\rm Basic}$, since it is an arbitrary expansion.

We assume that all the $\cL$-structures we consider are models of $\cT_{\rm Basic}$, the $\cL_{\rm Basic}$-theory stating that $\VF$ is an abelian group, that $\VG=\VG^\times\cup \set{0}$, with $\VG^\times$ a (multiplicatively written) ordered abelian group, $\abs{\cdot} : \VF\to \VG$ a surjective ultrametric  absolute value (for groups), $\ac : \VF\to \RF$ is surjective and $\ac^{-1}(0)=\set{0}$.

Consider an $\cL$-structure with $K$ for the universe of the sort $\VF$, $k$ for $\RF$, and $\Gamma$ for $\VG$. We usually denote this structure by $(K,\cL)$.
\begin{remark}
Most often, $K$ will be a valued field, $k$ its residue field and $\Gamma$ its value group (hence the sort names $\VF$, $\RF$ and $\VG$), although here we just require $K$ to be a (valued) abelian group.
\end{remark}

We define an open (resp. closed) ball as a subset $B\subseteq K$ of the form
$\set{x\in K\mid \abs{x-a}< \alpha}$ (resp. $\set{x\in K\mid \abs{x-a}\leq \alpha}$, for some $a\in K$ and $\alpha\in \Gamma^\times$.

We define $k^\times$ as $k\backslash\set{0}$. For $\xi\in k$ and $\alpha\in \Gamma$, we introduce the notation
\[
A_{\xi,\gamma}=\set{x\in K\mid \ac(x)=\xi,\abs{x}=\alpha}.
\]
Observe that if $\xi\in k^\times$ and $\alpha\in \Gamma^\times$, then $A_{\xi,\gamma}$ is an open ball.


We put on $K$ the valuation topology, that is, the topology with the collection of open balls as base and the product topology on Cartesian powers of $K$. 

For a tuple $x=(x_1, \cdots, x_n)\in K^n$, set $|x|=\max_{1\leq 1\leq n} \set{|x_i|}$.

\begin{defn}\label{def:lip}
Let $f: X\subseteq K^m \to K$ be a function. The function $f$ is called $1$-Lipschitz continuous (globally on $X$) or, in a short form, $1$-Lipschitz if for all $x$ and
$y$ in $X$,
\[
\abs{f(x) -f(y)} \leq \abs{x-y}.
\]
The function $f$ is called locally $1$-Lipschitz
if, locally around each point of $X$, the function $f$ is $1$-Lipschitz continuous.
\end{defn}

For $\gamma\in \Gamma^\times$, a function $f: X\subseteq K^n \to K$
is called $\gamma$-Lipschitz if for all $x$ and
$y$ in $X$,
\[
\abs{f(x) -f(y)} \leq \gamma\cdot \abs{x - y}.
\]

\begin{defn}[s-continuity]\label{defjacprop}
Let $F:A\to K$ be a function for some set $A\subseteq K$. We say that $F$
is s-continuous if for each open ball $B\subseteq A$ the set $F(B)$ is either a singleton or an open ball, and
there exists $\gamma= \gamma (B)\in \Gamma$ such that
\begin{equation}\label{s-gamma}
\abs{F(x)-F(y)} = \gamma \abs{x-y}\ \mbox{ for all $x,y\in B$}.
\end{equation}
\end{defn}

If a function $g:U\subseteq K^n\to K$ on an open $U$ is $s$-continuous in, say, the variable $x_n$, by which we mean that $g(a,\cdot)$ is $s$-continuous for each choice of $a=(x_1,\ldots,x_{n-1})$ then we write $|\partial g/\partial x_n ( a,x_n)|$ for the element $\gamma\in \Gamma$ witnessing the s-continuity of $g(a,\cdot)$ locally at $x_n$, namely, $\gamma$ is as in (\ref{s-gamma}) for the function $F(\cdot)=g(a,\cdot)$, where $x,y$ run over some ball $B$ containing $x_n$ and with $\{a\}\times B\subseteq U$.



\begin{defn}[Tame configurations]\label{config}
Fix integers $a\geq 0$, $b\geq 0$, a set
$$
T\subseteq K\times k^a\times \Gamma^b,
$$
and some $c\in K$. We say that $T$ is in $c$-config if there is $\xi\in k$ such that
$T$ equals the union over $\gamma\in \Gamma$ of sets

$$(c + A_{\xi,\gamma})\times U_{\gamma}$$
for some $U_{\gamma}\subseteq  k^a\times \Gamma^b$.
If moreover $\xi\not=0$ we speak of an open $c$-config, and if $\xi=0$ we speak of a graph $c$-config.
If $T$ is non-empty and in $c$-config, then $\xi$ and the sets $U_\gamma$ with $A_{\xi,\gamma}$ non-empty are uniquely determined by $T$ and $c$.

We say that $T \subseteq K\times k^a\times \Gamma^b$ is in $\cL$-tame config if there exist $s \geq 0$ and $\cL$-definable functions
\[g:K\to k^s\ \mbox{ and }\ c:k^s\to K\
\]
such that the range of $c$ contains no open ball,
and, for each $\eta\in k^s$, the set
\[T\cap (g^{-1}(\eta) \times k^a \times \Gamma^b  )
\]
is in $c(\eta)$-config.
\end{defn}




For any $\cL$-structure $M$ which is elementarily equivalent to $(K,\cL)$ and for any language $L$ which is obtained from $\cL$ by adding some elements of  $M$ (of any sort) as constant symbols, call $(M,L)$ a test pair for $(K,\cL)$.

\begin{defn}[Tameness]\label{Ktame}
We say that $(K,\cL)$ is weakly tame if the following conditions hold.
\begin{enumerate}
\item[(1)] Each $\cL$-definable set $T\subseteq K\times k^a\times \Gamma^b$ with $a\geq 0$, $b\geq 0$ is in $\cL$-tame config.

\item[(2)] 
For any $\cL$-definable function $F:X\subseteq K\to K$ there exist $s\geq 0$ and an $\cL$-definable function
$g:X\to k^s$ such that, for each $\eta\in k^s$, the restriction of $F$ to $g^{-1}(\eta)$ is s-continuous.
\end{enumerate}
We say that $(K,\cL)$ is tame when each test pair $(M,L)$ for $(K,\cL)$ is weakly tame.
Call an $\cL$-theory $\cT$ tame if for each model $\cM$ of $\cT$, the pair $(\cM,\cL)$ is tame.
\end{defn}

Recall \cite[Corollary 2.1.11]{CCL-PW}, which states that a tame theory, restricted in the sorts $\VF,\RF,\VG$, is $b$-minimal, in the sense of \cite{CLb}. In particular,
one can make use of  dimension theory for $b$-minimal structures.

%
%

\subsection{Skolem functions}

Recall that an $\cL$-structure $M$ has algebraic Skolem functions if for any $A\subseteq M$ every finite $A$-definable set $X\subseteq M^n$ admits an $A$-definable point.
Observe that this condition is equivalent to the fact that the model theoretic algebraic closure is equal to the definable closure.
More generally, for a multisorted language, we say that a structure $M$ has algebraic Skolem functions in the sort $S$ if for any $A\subseteq M$ and every finite $A$-definable set $X\subseteq S_M^n$ there is an $A$-definable point, with $S_M$ the universe for the sort $S$ in the structure $M$.

We say that a theory $T$ has algebraic Skolem functions (in the sort $S$), if each model has.
In any case, one can algebraically skolemize in the usual sense,  that is, given a theory $T$ in a language $\cL$, the algebraic skolemization of $T$ in the sort $S$ is the theory $T^s$ in an expansion $\cL^s$ of $\cL$ obtained by adding function symbols, such that $T^s$ has algebraic Skolem functions in the sort $S$ and such that $(\cL^s,T^s)$ is minimal with this property (where minimality is seen after identifying pairs with exactly the same models and definable sets), see also \cite{SkolemFunctions2004}.


\begin{lem}
\label{lem-skolemization}
Let $\cL$ a countable language extending $\cL_{\rm Basic}$ and $\cT$ a tame $\cL$-theory.
If $\cT$ has algebraic Skolem functions in the sort $\RF$, then it also has algebraic Skolem functions in the sort $\VF$.
In any case, there is a countable extension $\cL'$ of $\cL$ by function symbols on the sort $\RF$  
and an $\cL'$-theory $\cT'$ extending $\cT$ such that $\cT'$ has algebraic Skolem functions in the sort $\RF$ and hence also in the sort $\VF$. Moreover, every model of $\cT$ can be extended to an $\cL'$-structure that is a model of $\cT'$, and, $\cT'$ is tame.
\end{lem}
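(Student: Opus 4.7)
Suppose $\cT$ is tame with algebraic Skolem functions in $\RF$. Given $A$ in a model $M \models \cT$ and a finite $A$-definable $X \subseteq K^n$, I argue by induction on $n$. For $n = 1$, I apply tameness condition (1) with $a = b = 0$ to obtain $\cL(A)$-definable functions $g : K \to k^s$ and $c : k^s \to K$ such that each slice $X \cap g^{-1}(\eta)$ is in $c(\eta)$-config. An open config contains an open ball and is therefore infinite, so the finiteness of $X$ forces every nonempty slice to be a graph config; since $\ac^{-1}(0) = \{0\}$ gives $A_{0,\gamma} = \emptyset$ for $\gamma \neq 0$ and $A_{0,0} = \{0\}$, a graph $c$-config inside $K$ (with $a = b = 0$) is simply the singleton $\{c(\eta)\}$. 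Thus $g(X) \subseteq k^s$ is a finite nonempty $A$-definable set, so algebraic Skolem in $\RF$ yields an $A$-definable $\eta_0 \in g(X)$, and then $c(\eta_0) \in X$ is the required $A$-definable point. For $n \geq 2$, project to the first coordinate to get a finite $A$-definable subset of $K$, pick an $A$-definable element $a_1$ of it by the base case, and apply the induction hypothesis to the finite $(A \cup \{a_1\})$-definable fibre $X_{a_1} \subseteq K^{n-1}$.

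\textbf{Construction of $\cL'$ and $\cT'$.} I use standard iterative Skolemization restricted to the $\RF$ sort. Starting from $(\cL_0, \cT_0) = (\cL, \cT)$, at stage $i{+}1$ I add, for each $\cL_i$-formula $\varphi(x; \bar y)$ with $x$ of sort $\RF$, a new function symbol $f_\varphi$ valued in $\RF$, together with the first-order axioms (one per finite-size bound on the fibre) expressing that $f_\varphi(\bar y) \in \{x : \varphi(x; \bar y)\}$ whenever the latter is finite and nonempty. Set $\cL' = \bigcup_i \cL_i$ and $\cT' = \bigcup_i \cT_i$; countability is maintained stage-by-stage. Any model of $\cT$ extends to a model of $\cT'$: at each stage, invoke the axiom of choice to pick a witness from each finite nonempty fibre and a default value otherwise, and iterate.

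\textbf{Tameness of $\cT'$.} This is the main obstacle. The crucial structural fact is that all new function symbols output only in $\RF$, and in languages extending $\cL_{\rm Basic}$ there is no term producing a $\VF$-value out of $\RF$-inputs; hence every $\VF$-valued $\cL'$-term is already an $\cL$-term. By induction on formula complexity, every $\cL'(A)$-definable set $T \subseteq K \times k^a \times \Gamma^b$ admits a presentation
\[
T = \bigl\{(x, \bar\xi, \bar\gamma) : (x, \bar\xi, H(x, \bar\xi, \bar\gamma), \bar\gamma) \in \widetilde T\bigr\}
\]
for some $N \geq 0$, some $\cL(A)$-definable $\widetilde T \subseteq K \times k^{a+N} \times \Gamma^b$, and some $\cL'(A)$-definable $H : K \times k^a \times \Gamma^b \to k^N$ whose coordinates are Skolem-term applications. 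I then apply tameness of $\cT$ to $\widetilde T$ to obtain $\cL(A)$-definable $\widetilde g : K \to k^{s'}$ and $\widetilde c : k^{s'} \to K$ witnessing $\cL(A)$-tame config of $\widetilde T$, and I refine $\widetilde g$ further to a $\cL'(A)$-definable $g : K \to k^{s}$ using the level sets of the components of $H$. The key point is that, by tameness of $\cT$ (equivalently, b-minimal cell decomposition via \cite[Corollary 2.1.11]{CCL-PW}) applied with parameters in $\RF \cup \VG$, the $\RF$-valued functions built from Skolem terms become piecewise constant on strata stable under the $c$-config description. Condition (2) on s-continuity is handled similarly by stratifying the domain so that the $\RF$-valued data feeding into the formula are constant on each piece. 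The same arguments relativise to any test pair, so $\cT'$ is tame; combined with Part 1 this yields algebraic Skolem functions in both $\RF$ and $\VF$. The delicate step to watch is the refinement that transfers tame config from $\widetilde T$ to $T$ while keeping the centre $c$ $\cL'(A)$-definable and independent of $(\bar\xi, \bar\gamma)$; verifying that the $\RF$-level sets of $H$ interact correctly with the $c$-config structure is where one must carefully combine both clauses of tameness.
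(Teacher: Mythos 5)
Your proof is correct and takes essentially the same approach as the paper: the first statement is derived by using tame config (with $a=b=0$) to put a finite definable $\VF$-subset in definable bijection with a finite $\RF$-subset, and tameness of $\cT'$ is verified by replacing each Skolem-symbol application in an $\cL'$-formula for $T$ with a fresh $\RF$-variable to obtain an $\cL$-definable control set (your $\widetilde T$, the paper's $T_0\supseteq T$) and invoking $\cL$-tame config of that set. The refinement you honestly flag as ``the delicate step'' — arranging that the $\RF$-valued data feeding the Skolem terms become locally constant so that tame config transfers from $\widetilde T$ to $T$, and that this relativizes to all test pairs — is glossed over with similar brevity in the paper's own proof.
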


\begin{proof}
Since $\cT$ is tame, every finite definable (with parameters) set in the $\VF$ sort is in definable bijection with a definable set in the $\RF$ sort. The first statement follows: If $\cT$ has algebraic Skolem functions in the sort $\RF$, then also in the sort $\VF$. In general, let us algebraically skolemize the theory $\cT$ in the sort $\RF$. Denote by $\cL'$ and $\cT'$ the obtained language and theory. Clearly one may take $\cL'$ to be countable. It remains to prove that $\cT'$ is tame. One needs to check condition $(1)$ and $(2)$ of Definition \ref{Ktame}. Assume that $(K,\cL')$ is a model of $\cT'$ and  let $T\subseteq K\times k^a\times \Gamma^b$ be some  $\cL'$-definable set. Then there is an $\cL$-definable set $T_0$ such that $T\subseteq T_0$ and for each $(x,\xi_0,\alpha)\in T_0$, there is $\xi$ such that $(x,\xi,\alpha)\in T$ and $(x,\xi_0,\alpha)\in \mathrm{acl}_{\cL}(x,\xi,\alpha)$. Indeed, an $\cL$-formula for $T_0$ is made from one for $T$ by replacing each occurrence of a new function symbol by a formula for the definable set it lands in.  The fact that $T_0$ is in $\cL$-tame config then implies that $T$ is in $\cL'$-tame config. The reasoning for $(2)$ is similar.
\end{proof}

\begin{remark}
\label{rem-skolem-localfields}
Let $\cL$ be an extension of $\cL_{\rm Basic}$ such that any local field can be endowed with an $\cL$-structure. Let $\cT$ be an $\cL$-theory such that any ultraproduct of local fields which is of residue characteristic zero is a model of $\cT$. Consider the algebraic Skolemization $\cL'$, $\cT'$ in the sort $\RF$ from Lemma \ref{lem-skolemization}. Then one can endow  every local field with an $\cL'$-structure such that moreover any ultraproduct of such structures that is of residue characteristic zero is a model of $\cT'$.
Indeed, for each new function symbol in $\cL' \setminus \cL$ set the function output to be $0$ if the corresponding set is empty, and to be any point in the the set if non-empty.
Such a choice of $\cL'$-structure is often highly non-canonical and is not required to be compatible among field extensions.
\end{remark}


\begin{remark}
Usually the Skolemization process breaks most of the model-theoretic properties of the theory. However, since we apply it only to the residue field many results such as cell decomposition are preserved. Moreover, since we add only algebraic Skolem functions in the sort $\RF$, the situation is somehow controlled, for example, if the theory of the residue field is simple in the sense of model theory, then adding algebraic Skolem functions in the residue field preserves simplicity, see \cite{SkolemFunctions2004}.

It also worth to note that we will apply our results in the case where the residue field is pseudo-finite, and that such fields almost always have algebraic Skolem functions, see Beyarslan-Hruskovski \cite{Pseudofinite2012}. See also the work by Beyarslan-Chatzidakis \cite{geometricpsf2017} for a more concrete characterization.
\end{remark}

\subsection{Lipschitz continuity}\label{sec-lipschitz}

We can now state our first main result on Lipschitz continuity, going from local to piecewise global (with finitely many pieces).

\begin{thm}\label{Lip}
Suppose that $(K,\cL)$ is tame with algebraic Skolem functions in the sort $\RF$.
Let $f:X\subseteq K^n\to K$ be an $\cL$-definable function which is locally $1$-Lipschitz. Then there exists a finite definable partition of $X$ such that the restriction of $f$ on each of the parts is $1$-Lipschitz.
\end{thm}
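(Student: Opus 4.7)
I would proceed by induction on $n$, with the entire difficulty concentrating in the base case $n=1$. For the inductive step $n>1$, I apply the case $n=1$ to the partial function $f(a,\cdot)$ of the last coordinate, uniformly in $a\in K^{n-1}$ thanks to b-minimal cell decomposition (available by Corollary 2.1.11 of \cite{CCL-PW} together with Lemma \ref{lem-skolemization}), then iterate over the other coordinates. On each resulting cell, the ultrametric inequality
\[
|f(x)-f(y)|\leq \max_{i}\bigl|f(x_1,\dots,x_i,y_{i+1},\dots,y_n)-f(x_1,\dots,x_{i-1},y_i,\dots,y_n)\bigr| \leq \max_i|x_i-y_i| = |x-y|
\]
promotes separate 1-Lipschitz continuity to joint 1-Lipschitz continuity, the intermediate points $(x_1,\dots,x_i,y_{i+1},\dots,y_n)$ remaining in the cell because cells are coordinate-fibered.

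For the base case $n=1$, I first apply tameness condition (2) to obtain a definable $g\colon X\to k^s$ making $f|_{g^{-1}(\eta)}$ s-continuous, and tameness condition (1) to $X\subseteq K$ to get a tame configuration: definable $h\colon K\to k^{s'}$ and $c\colon k^{s'}\to K$ with image of $c$ containing no open ball, such that each $X\cap h^{-1}(\eta)$ is in $c(\eta)$-config, hence a disjoint union of open balls of the form $c(\eta)+A_{\xi,\gamma}$. The decisive use of algebraic Skolem in $\RF$, propagated to $\VF$ by Lemma \ref{lem-skolemization}, enters here: in the tame (hence b-minimal) theory, a definable subset of $K$ containing no open ball is finite, and by Skolem in $\VF$ it is realized as a finite union $\{c_1,\dots,c_N\}$ of $\cL$-definable points. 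Partitioning $X$ according to the value of $c(h(x))\in\{c_1,\dots,c_N\}$ produces a finite definable partition $X=X_1\sqcup\cdots\sqcup X_N$, each $X_i$ being a disjoint union of open balls around the single center $c_i$; on each individual ball, s-continuity forces one Lipschitz constant which, by local 1-Lipschitz, must be at most $1$, so $f$ is 1-Lipschitz on each ball.

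The remaining step, and the real work of the proof, is to pass from 1-Lipschitz on each individual ball to 1-Lipschitz globally on each $X_i$. If $x,y\in X_i$ lie in the same annulus around $c_i$ there is nothing to do; otherwise $|x-y|=\max(|x-c_i|,|y-c_i|)$, so it suffices to exhibit a value $v_i\in K$, independent of $x$, with $|f(x)-v_i|\leq|x-c_i|$ for every $x\in X_i$, as the ultrametric triangle inequality then closes the estimate. The main obstacle is constructing such a $v_i$. My plan is to choose a definable sequence of annuli of $X_i$ shrinking towards $c_i$, use s-continuity and the local 1-Lipschitz hypothesis to show that $f$ restricted to that sequence is ultrametrically Cauchy, and invoke algebraic Skolem in the residue field, in a sufficiently saturated model, to select $v_i$ definably. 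This step is precisely where algebraic Skolem in $\RF$ replaces the infinite, residue-field-indexed partition of \cite[Theorem~2.1.7]{CCL-PW} by a genuinely finite one.
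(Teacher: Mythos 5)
The central inductive mechanism you propose does not work, and this is precisely the difficulty that the paper's proof is designed to overcome. You claim that on a cell, the intermediate points $(x_1,\dots,x_i,y_{i+1},\dots,y_n)$ remain in the cell "because cells are coordinate-fibered," but this is false. Cells in tame (or $b$-minimal) structures are not Cartesian products: the condition on the $i$-th coordinate involves a center $c_i(x_{<i})$ depending on the earlier coordinates (Definition \ref{cellc}). Concretely, take $n=2$ and $X=\{(x_1,x_2)\mid |x_2-x_1|<|x_1|\}$, which is a cell with $c_2(x_1)=x_1$; if $x=(x_1,x_2)$ and $y=(y_1,y_2)$ lie in $X$ with $|x_1|=|y_1|$ but $\ac(x_1)\neq\ac(y_1)$, then $|y_2-x_1|=|x_1|$ and $(x_1,y_2)\notin X$, so $f(x_1,y_2)$ is undefined and the chain of ultrametric estimates collapses. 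Hence "separately 1-Lipschitz in each variable on a cell" does not promote to "jointly 1-Lipschitz," and the $n>1$ step of your induction is unjustified.

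Your base case $n=1$ correctly identifies that one must exhibit a center value $v_i$ with $|f(x)-v_i|\leq|x-c_i|$ for all $x\in X_i$, but the construction you sketch -- a "definable sequence of annuli shrinking toward $c_i$," an "ultrametrically Cauchy" argument, and a Skolem-function selection of a limit -- is not valid in this setting. Models of a tame theory are arbitrary and need not be complete, so ultrametric Cauchy sequences need not converge, and algebraic Skolem functions in $\RF$ provide definable choices among finitely many algebraic points, not limits of sequences. The paper instead obtains the center $d$ (your $v_i$) structurally: by the tameness hypotheses and the prior result \cite[Theorem 2.1.8]{CCL-PW}, the \emph{image} of the domain cell under $f$ is itself in a tame configuration, hence has a definable center $d$, and Lemma \ref{lem-centers-indep} together with algebraic Skolem functions shows this center can be taken independent of the residue-field fiber. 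This yields $|f(x)-d|\leq|x-c|$ as a combinatorial consequence of s-continuity and local 1-Lipschitz, with no appeal to completeness.

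More broadly, the paper does not use a base-case-plus-coordinate-iteration scheme. It runs a joint induction on $n$ between Theorem \ref{Lip} and the companion Theorem \ref{Lipcenter}, the latter guaranteeing that the domain center $c(\hat x)$ and the image center $d(\hat x)$ are themselves 1-Lipschitz functions of the remaining coordinates $\hat x$. The final argument then splits into two cases: when $x_n$ and $y_n$ lie in a common ball of $X_{\hat x}$, the intermediate point $(\hat x,y_n)$ genuinely lies in the domain and the ultrametric chain goes through; otherwise one has $|x_n-c(\hat x)|\leq|x_n-y_n|$ and $|y_n-c(\hat y)|\leq|x_n-y_n|$, and the estimate is closed through the centers using their 1-Lipschitz property, bypassing entirely the need for any intermediate point. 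Your proposal is missing this mechanism -- the 1-Lipschitz centers and the case split -- and that is exactly what makes the non-boxlike geometry of cells tractable.
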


As in \cite{CCL-PW}, Theorem \ref{Lip} is complemented by Theorem \ref{Lipcenter} about simultaneous partitions of domain and range into parts with $1$-Lipschitz centers. They are proved by a joint induction on $n$.

\begin{thm}[Lipschitz continuous centers in domain  and range]\label{Lipcenter}
Suppose that $(K,\cL)$ is tame with algebraic Skolem functions in the sort $\RF$.
Let $f:A\subseteq K^n\to K$ be an $\cL$-definable function which is locally $1$-Lipschitz. Then, for a finite partition of $A$ into definable parts, the following holds for each part $X$.
There exist $s\geq 0$, a coordinate projection $p:K^n\to K^{n-1}$ and
$\cL$-definable functions
$$
g:X\to k^s,\
c:  p(X)\subseteq K^{n-1}\to K \mbox{ and } d: p(X)\subseteq K^{n-1}\to K
$$
such that, $c$ and $d$ are 1-Lipschitz, and for each $\eta\in k^s$ and $w$ in $p(K^{n})$, the set $g^{-1}(\eta)_w$ is in $c(w)$-config and the image of $g^{-1}(\eta)_w$ under $f_w$ is in $d(w)$-config.
\end{thm}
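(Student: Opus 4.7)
The plan is to prove Theorem \ref{Lipcenter} by joint induction on $n$ together with Theorem \ref{Lip}. When $n=1$ the projection $p:K\to K^{0}$ is trivial, so any $c,d$ are automatically $1$-Lipschitz; existence of the configuration decomposition follows directly from tameness condition (1) applied to $X\subseteq K$, combined with tameness condition (2) applied to $f$ to refine into pieces on which $f$ is $s$-continuous, so that the image of each open $c_0$-config piece is an open ball or singleton and therefore inherits a $d_0$-config structure.

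For the inductive step, fix $n\geq 2$ and choose a coordinate projection $p:K^n\to K^{n-1}$ (forgetting, say, $x_n$). View $X$ as fibered over $w\in p(X)$. Apply tameness condition (1) uniformly in the parameter $w$ (i.e.\ in the test pair obtained by adjoining $w$) to obtain $s_0\geq 0$, an $\cL$-definable function $g_0:X\to k^{s_0}$, and an $\cL$-definable center function $c_0:p(X)\to K$ such that each fiber $g_0^{-1}(\eta)_w$ is in $c_0(w)$-config. Refine by tameness condition (2) applied to $f_w$ so that on every piece $f_w$ is $s$-continuous; then (\ref{s-gamma}) forces the image $f_w\bigl(g_0^{-1}(\eta)_w\bigr)$ to be an open ball or singleton, and tame-config on the image side gives an analogous center $d_0:p(X)\to K$. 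This is the step where algebraic Skolem functions in the sort $\RF$ are essential: they allow $c_0$ and $d_0$ to be chosen as \emph{single-valued} $\cL$-definable functions rather than as finite correspondences indexed by additional residue-field points, which is precisely the improvement over Theorem 2.1.7 of \cite{CCL-PW}.

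The main obstacle is now to arrange that $c_0$ and $d_0$ are themselves $1$-Lipschitz after a further finite definable partition. The key step is to show they are \emph{locally} $1$-Lipschitz, after which inductive Theorem \ref{Lip} at dimension $n-1$ provides a finite partition of $p(X)$ on whose parts $c_0$ and $d_0$ are globally $1$-Lipschitz; pulling back along $p$ and refining by $g_0$ yields the desired partition of $X$. To see that $c_0$ is locally $1$-Lipschitz at $w$, pick $w'$ close to $w$ and compare the open-ball decompositions $X_w=\bigsqcup_\gamma(c_0(w)+A_{\xi,\gamma})\times\cdots$ and $X_{w'}=\bigsqcup_\gamma(c_0(w')+A_{\xi',\gamma'})\times\cdots$: using a point $x\in X_w$ and a nearby $x'\in X_{w'}$ obtained by varying only in $p$-coordinates, the local $1$-Lipschitz continuity of $f$ together with the ultrametric triangle inequality forces $|c_0(w)-c_0(w')|\le|w-w'|$ on a neighborhood of $w$. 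An analogous argument using the $s$-continuity equation (\ref{s-gamma}) for $f_w$ and the open-ball structure of its image shows that $d_0$ is also locally $1$-Lipschitz. The hardest part of the argument is this local-Lipschitz verification for the centers, since it requires transporting the ball structure from the domain to the image coherently; the fact that the centers are single-valued (thanks to the Skolem hypothesis) is what makes such a comparison possible without a parasitic residue-field parameter.
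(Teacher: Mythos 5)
Your high-level framework is the right one and matches the paper: joint induction with Theorem~\ref{Lip}, obtain centers that are \emph{locally} $1$-Lipschitz, then apply Theorem~\ref{Lip} at dimension $n-1$ to refine and make them globally $1$-Lipschitz. However, there is a genuine gap in how you try to establish the local Lipschitz property of the centers, and that gap is the whole crux of the matter.

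You attempt to verify directly that $c_0$ is locally $1$-Lipschitz by ``comparing the open-ball decompositions $X_w$ and $X_{w'}$'' and invoking the local $1$-Lipschitz property of $f$ plus the ultrametric inequality. This does not work as stated. The center $c_0$ is a feature of the cell decomposition of the \emph{domain} $X$; it is not controlled by the function $f$ in any elementary way. If you compare $x=(w,x_n)\in X_w$ with a nearby $x'=(w',x_n)\in X_{w'}$, the ultrametric inequality only gives $|c_0(w)-c_0(w')|\le\max(|x_n-c_0(w)|,|x_n-c_0(w')|)$, and there is nothing bounding these radii by $|w-w'|$. Moreover, $x_n$ need not even lie in $X_{w'}$, and the data $\xi,\gamma$ of the decomposition can jump discontinuously as $w$ varies. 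Obtaining centers that are locally $1$-Lipschitz and compatible with the Lipschitz behaviour of $f$ is precisely the content of Theorem~2.1.8 of \cite{CCL-PW}, which is a substantial cell-decomposition theorem, not a two-line consequence of tameness conditions (1) and (2) and the ultrametric inequality. The paper's argument does not attempt to re-derive this; it cites it.

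The paper in fact isolates the part you are trying to reprove into Lemma~\ref{lem-centers-indep}. That lemma applies Theorem~2.1.8 of \cite{CCL-PW} (which already gives locally $1$-Lipschitz centers, but depending on extra $k$-variables), then uses the algebraic-Skolem-functions-in-$\RF$ hypothesis to remove the $k$-variable dependency (the range of $c(\cdot,w)$ is finite, hence in definable bijection with a subset of $k^{s'}$, whose points are $w$-definable by Skolem), and finally invokes Lemma~\ref{lem:loc1Lipschitz} to paste together local Lipschitz on pieces. The proof of Theorem~\ref{Lipcenter} itself is then a short induction: base case $n=1$ is exactly Lemma~\ref{lem-centers-indep} (the Lipschitz condition on $c,d$ being vacuous); inductive step applies Lemma~\ref{lem-centers-indep} to get locally $1$-Lipschitz $c,d:p(X)\to K$, then Theorem~\ref{Lip} for $n-1$ to make them globally $1$-Lipschitz. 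You sketch the Skolem step correctly in spirit, but you should route the argument through Theorem~2.1.8 of \cite{CCL-PW} (or, equivalently, Lemma~\ref{lem-centers-indep}) rather than claiming a direct elementary verification of the local Lipschitz bound for the centers.
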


Before proving Theorems \ref{Lip} and \ref{Lipcenter}, we establish in Lemma \ref{lem-centers-indep} a weaker version of Theorem \ref{Lipcenter}, where the centers are only required to be locally 1-Lipschitz. It will itself rely on \cite[Theorem 2.1.8]{CCL-PW}, which looks similar but there the centers depend on auxiliary parameters.

\begin{lem}\label{lem:loc1Lipschitz}
Suppose that $(K,\cL)$ is tame with algebraic Skolem functions in the sort $\RF$. Let $Y\subseteq K^n\times k^s$ be a definable set, $p : Y\to K^n$ be the canonical projection, $X=p(Y)$, and $f : X \to K$ be a definable function such that for each $\eta\in k^s$, the restriction of $f$ to $Y_\eta$ is  locally 1-Lipschitz. Then there is a finite definable partition of $X$ such that the restriction of $f$ on each of the pieces is locally 1-Lipschitz.
\end{lem}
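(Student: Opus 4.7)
The strategy is to reduce the global claim to the pointwise hypothesis by definably selecting, piecewise in $X$ and locally constantly in the valuation topology, a residue-field ``label'' $\sigma(x)\in k^s$ with $(x,\sigma(x))\in Y$. Once such a $\sigma$ is available on a finite definable partition $X=X_1\sqcup\cdots\sqcup X_N$, local $1$-Lipschitz continuity of $f|_{Y_{\sigma(x_0)}}$ at $x_0\in X_i$ transfers directly to local $1$-Lipschitz continuity of $f|_{X_i}$ at $x_0$: by local constancy of $\sigma$, a small enough open neighborhood $V\ni x_0$ in $K^n$ satisfies $V\cap X_i\subseteq Y_{\sigma(x_0)}$, and the $1$-Lipschitz bound available on some $V'\subseteq V$ inside $Y_{\sigma(x_0)}$ yields the same bound on $V'\cap X_i$.

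The first and most substantial step is the construction of the section $\sigma_i:X_i\to k^s$. I will iterate the tame-config description of Definition~\ref{config} through the $n$ valued-field variables of $Y\subseteq K^n\times k^s$ to decompose $Y$ into finitely many cells, which in turn induce a finite definable partition of $X=p(Y)$ on each piece of which the fiber $Y_x\subseteq k^s$ is described by a formula uniform in $x$. The residual task is then to definably select a point of a non-empty definable subset of $k^s$. I propose to carry this out by running, in the sort $\RF$ and one coordinate at a time, the $b$-minimal cell decomposition available for tame theories (cf.~\cite[Corollary~2.1.11]{CCL-PW}), so as to reduce at each stage to a finite definable subset of $k$; at that point the algebraic Skolem hypothesis in $\RF$ supplies the required definable value.

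With $\sigma_i$ in hand, a further refinement of the partition arranges local constancy in the valuation topology. The level sets $\sigma_i^{-1}(\eta)\subseteq X_i$ are definable, and by the tame-config structure each of them is open in $X_i$ off a definable locus of strictly smaller dimension; separating this lower-dimensional locus and inducting on dimension (valid by the $b$-minimal dimension theory of tame structures) gives the desired partition on which $\sigma_i$ is locally constant. Combined with the transfer explained in the first paragraph, this completes the proof.

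The main obstacle is the construction of $\sigma_i$: algebraic Skolem in $\RF$ only produces definable points of finite $k$-definable sets, so reducing the possibly infinite fibers $Y_x\subseteq k^s$ to finite definable subsets requires carefully iterated cell decomposition in the residue-field sort, and the uniformity in $x$ of the choices made along the way must be preserved throughout. The local-constancy refinement and the final transfer are then routine consequences of tameness and of the definition of local $1$-Lipschitz continuity.
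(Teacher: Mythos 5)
Your proposal hinges on constructing, on each piece of a finite definable partition, a definable section $\sigma:X_i\to k^s$ with $(x,\sigma(x))\in Y$. This is where the argument breaks. Algebraic Skolem functions in $\RF$ only give definable points of \emph{finite} $A$-definable subsets of $\RF^n$; the fibers $Y_x\subseteq k^s$ are in general infinite (take $Y=\cO_\VF\times k$ with $f$ any locally $1$-Lipschitz function independent of the $k$-coordinate), and there is no mechanism in a tame theory to reduce an infinite definable subset of $k^s$ to a finite one. The $b$-minimal cell decomposition you invoke (\cite[Corollary~2.1.11]{CCL-PW}) governs the valued-field variables and produces cells parametrized by $\RF$-variables; it says nothing that would shrink an arbitrary definable subset of $\RF^s$ down to a finite set. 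So the central selection step is not justified, and in fact cannot be carried out in general.

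The paper's proof sidesteps the need for any selection. It works by induction on $n$, jointly with Lemma~\ref{lem:projection-full-dim}. When $X$ has dimension $n$, it sets $X'=\bigcup_{\eta:\dim Y_\eta=n}\Int(Y_\eta)$; for $x_0\in X'$ some $\eta$ witnesses $x_0\in\Int(Y_\eta)$, a small ball around $x_0$ lies in $Y_\eta$, and local $1$-Lipschitz continuity of $f|_{Y_\eta}$ immediately gives local $1$-Lipschitz continuity of $f$ at $x_0$ — crucially, this is a pointwise existential statement and no definable choice of $\eta$ is needed. By $b$-minimal dimension theory (frontiers and unions over $\RF$-parametrized families of smaller-dimensional sets have smaller dimension), $X''=X\setminus X'$ has dimension $<n$; Lemma~\ref{lem:projection-full-dim} then supplies, after a finite definable partition of $X''$, an injective coordinate projection to $K^{n-1}$ with locally $1$-Lipschitz inverse, and the induction hypothesis finishes. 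Your ``local constancy of $\sigma$ off a lower-dimensional locus'' step is a shadow of this dimension induction, but the section itself is both unattainable and unnecessary: the statement to be proved (``locally $1$-Lipschitz on each piece'') is itself only a pointwise condition, so no uniform residue-field label is ever required.
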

The proof of Lemma \ref{lem:loc1Lipschitz} is  a joint induction with the following lemma.
\begin{lem}\label{lem:projection-full-dim}
Suppose that $(K,\cL)$ is tame with algebraic Skolem functions in the sort $\RF$.
Let $A\subseteq K^m$ be a definable set of dimension $n$. Then there is a finite definable partition of $A$ such that for each part $X$, there is an injective projection $X\subseteq K^m \to K^n$ and its inverse is locally 1-Lipschitz.
\end{lem}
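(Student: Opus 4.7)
The plan is to reduce $A$ to finitely many graphs of definable maps over coordinate projections, access local derivative data via s-continuity, and then choose a coordinate projection on each piece so that all local derivatives have absolute value at most $1$. The argument should go by joint induction on $n$ together with Lemma \ref{lem:loc1Lipschitz}; the base case $n=0$ is immediate from the algebraic Skolem functions in $\VF$ provided by Lemma \ref{lem-skolemization}.

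For the inductive step, I first invoke dimension theory for $b$-minimal theories (applicable since $(K,\cL)$ is tame) together with algebraic Skolem functions in $\VF$ to partition $A$ into finitely many pieces such that each piece $X$ admits some coordinate projection $p_S \colon K^m\to K^n$, $|S|=n$, bijective onto $p_S(X)$; such a piece is then the graph of a definable map $\varphi\colon p_S(X)\to K^{m-n}$. Applying Definition \ref{Ktame}(2) componentwise and variable-by-variable, and eliminating the auxiliary residue field parameters $\eta\in k^s$ by algebraic Skolem in $\RF$, I further partition so that each component $\varphi_j$ is s-continuous in each variable $w_i$; the local derivative magnitudes $|\partial \varphi_j/\partial w_i|\in \Gamma$ are then defined. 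Partitioning once more so that each inequality $|\partial \varphi_j/\partial w_i|\le 1$ has constant truth value, the pieces on which all such inequalities hold are done, since then $\varphi$ (and hence $p_S^{-1}$) is locally $1$-Lipschitz by the ultrametric inequality.

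It remains to handle pieces on which some $|\partial \varphi_j/\partial w_i|>1$. Here I would swap coordinates, replacing $S$ by $S'=(S\setminus\{i\})\cup\{n+j\}$; by the non-Archimedean linear-algebra fact that any rank-$n$ subspace $V\subseteq K^m$ admits some $S$ for which the projection $V\to K^S$ has inverse of operator norm at most $1$ (obtained by choosing the coordinate subset indexing an $n\times n$ minor of maximal absolute value in a basis matrix of $V$), there always exists such an $S$ at each point of $A$. Transporting this to the definable s-continuous setting via the local linearization and then partitioning $A$ definably according to the optimal $S$ (using algebraic Skolem in $\RF$ once more) gives the required partition. The principal difficulty is exactly this last point: making the choice of optimal $S$ definable from the s-continuity data and verifying the Cramer-style selection argument in the $b$-minimal setting, where only s-continuity rather than genuine differentiability is at our disposal. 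It is here that the joint induction is most likely to enter, via Lemma \ref{lem:loc1Lipschitz} in dimension $\le n$, to upgrade fiberwise local $1$-Lipschitz statements (parametrized over $k^s$) into genuine local $1$-Lipschitz statements after a further definable partition.
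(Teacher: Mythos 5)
Your plan matches the paper's strategy in outline: partition $A$ via $b$-minimal dimension theory plus algebraic Skolem functions (and compactness) so that each piece is graphed over some coordinate projection to $K^n$, then fix up the coordinate projection so that the inverse is locally $1$-Lipschitz, and finally invoke Lemma \ref{lem:loc1Lipschitz} in the joint induction to eliminate auxiliary residue-field parameters. The step you yourself flag as the principal difficulty --- making the Cramer-style choice of coordinate projection work definably in the s-continuous, $b$-minimal setting --- is a genuine gap, and it is exactly where the paper brings in an external ingredient: it cites Corollary 2.1.14 of \cite{CCL-PW}, which asserts that after a suitable change of coordinate projection the inverse of $p$ is locally $1$-Lipschitz when restricted to fibers of some definable $g:p(X)\to k^r$. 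That corollary does the coordinate-selection work (including the residue-field-parametrized part), and the joint induction with Lemma \ref{lem:loc1Lipschitz} is then used only to remove the remaining $k^r$-parametrization, not to produce the good projection in the first place. So your outline is sound but incomplete: you should not expect the joint induction alone to close the Cramer-style gap; you need to cite the Lipschitz cell-decomposition result of \cite{CCL-PW} (or reprove it in this setting) to make the argument land.
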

\begin{proof}[Proof of Lemma \ref{lem:projection-full-dim}]
Assume Lemma \ref{lem:loc1Lipschitz} holds for integers up to $n$.
We will use dimension theory for $b$-minimal structures. We get a finite definable partition of $A$ such that on each piece $X$, there is a projection $p : X\to K^n$ which is finite-to-one. For each $w\in p(X)$, the fiber $X_w$ is finite. By the existence of algebraic Skolem functions in the sort $\RF$ and hence also in $\VF$ by Lemma \ref{lem-skolemization}, each of the points of $X_w$ is definable. By compactness, we can find a finite definable partition of $X$ such that $p$ is injective on each of the pieces.

By \cite[Corollary 2.1.14]{CCL-PW}, up to changing the coordinate projection we see that the inverse of $p$ is locally 1-Lipschitz when restricted to fibers of some definable function $g : p(X)\to k^{r}$. By Lemma \ref{lem:loc1Lipschitz}, we can find a finite partition of $p(X)$ such that the inverse of $p$ is locally 1-Lipschitz on each of the parts.
\end{proof}
\begin{proof}[Proof of Lemma \ref{lem:loc1Lipschitz}]
We work by induction on $n$. If $n=0$ there is nothing to prove. Assume now $n\geq 1$ and that Lemmas \ref{lem:loc1Lipschitz} and \ref{lem:projection-full-dim} hold for integers up to $n-1$. Assume first that $X$ is of dimension $n$. By dimension theory, there is at least one $\eta$ such that $Y_\eta$ is of dimension $n$. Define $X'$ to be the union of the interior of $Y_\eta$ for all such $\eta\in k^s$. The function $f$ is locally 1-Lipschitz on $X'$. It remains to deal with $X''=X\backslash X'$. By dimension theory, $X''$ is of dimension less than $n$. Assume $X''=X$ for simplicity. By Lemma \ref{lem:projection-full-dim}, up to considering a finite definable partition of $X$ we can assume that there is an injective coordinate projection $p : X \to K^{n-1}$ with inverse locally 1-Lipschitz. Then $f$ is locally 1-Lipschitz if and only if $f\circ p^{-1}$ is. Now $p(X)$ with the function $f\circ p^{-1}$ satisfies the hypothesis of Lemma \ref{lem:loc1Lipschitz}. By induction hypothesis, we have the result.
\end{proof}

\begin{lem}\label{lem-centers-indep}
Suppose that $(K,\cL)$ is tame with algebraic Skolem functions in the sort $\RF$.
Let $f:A\subseteq K^n\to K$ be an $\cL$-definable function which is locally $1$-Lipschitz. Then, for a finite partition of $A$ into definable parts, the following holds for each part $X$.
There exist $s\geq 0$, a coordinate projection $p:K^n\to K^{n-1}$ and
$\cL$-definable functions
\[
g:X\to k^s,\
c: p(X)\subseteq K^{n-1}\to K \mbox{ and } d: K^{n-1}\to K
\]
such that the functions $c$ and $d$ are locally $1$-Lipschitz, and, for each $w$ in $p(K^{n})$, the set $g^{-1}(\eta)_w$ is in $c(w)$-config and the image of $g^{-1}(\eta)_w$ under $f_w$ is in $d(w)$-config.
\end{lem}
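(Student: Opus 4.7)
The plan is to leverage the analogous theorem from \cite{CCL-PW} (Theorem 2.1.8), which gives exactly the desired statement except that the centers are allowed to depend on auxiliary parameters in the residue field sort, and then to use the algebraic Skolem functions in $\RF$ guaranteed by Lemma \ref{lem-skolemization} to canonically eliminate those auxiliary parameters.

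In detail, I would first apply \cite[Theorem 2.1.8]{CCL-PW} to $f:A\subseteq K^n\to K$. This yields a finite definable partition of $A$ on each of whose parts $X$ there is a coordinate projection $p:K^n\to K^{n-1}$, a definable function $\tilde g:X\to k^{s_0}$, and definable centers $\tilde c:p(X)\times k^t\to K$ and $\tilde d:K^{n-1}\times k^t\to K$, parametrized by an auxiliary tuple $\xi\in k^t$, such that for each admissible $\xi$ the maps $w\mapsto \tilde c(w,\xi)$ and $w\mapsto\tilde d(w,\xi)$ are locally $1$-Lipschitz and the required config conditions hold. Next, I would use the algebraic Skolem functions in $\RF$ to pick, for each $w\in p(X)$, a canonical admissible $\xi_0(w)\in k^t$ from the finite definable set of valid choices. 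Setting $c(w):=\tilde c(w,\xi_0(w))$ and $d(w):=\tilde d(w,\xi_0(w))$ makes the centers depend on $w$ alone, while the configurations on $\tilde g^{-1}(\eta)_w$ transfer essentially unchanged once $\tilde g$ is enlarged so as to record the information needed to identify the original $\xi$.

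The main obstacle is that $c$ and $d$ need not be locally $1$-Lipschitz globally, since the Skolem selector $\xi_0$ may jump in $w$. However, on each fiber of the definable map $\xi_0:p(X)\to k^t$ the function $c$ coincides with one of the locally $1$-Lipschitz functions $w\mapsto\tilde c(w,\xi)$ for a constant $\xi$, and similarly for $d$. Thus $c$ and $d$ satisfy the hypothesis of Lemma \ref{lem:loc1Lipschitz} (with $Y$ the graph of $\xi_0$ over the relevant domain), and that lemma produces a further finite definable partition of $p(X)$ on each part of which $c$ and $d$ are locally $1$-Lipschitz. Pulling the refinement back to $X$ and absorbing the partition cell into an enlarged $g:X\to k^s$ completes the construction. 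The only delicate point in the write-up is to keep the $c(w)$- and $d(w)$-config alignments intact under the final refinement, which is handled by book-keeping with the enlarged $g$; the configuration data themselves depend only on the center at a given $w$, so no combinatorial change to the fibers is required.
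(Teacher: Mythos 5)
Your proposal follows the same general skeleton as the paper's proof (apply \cite[Theorem 2.1.8]{CCL-PW}, eliminate the auxiliary residue-field dependence of the centers via algebraic Skolem functions, and finally invoke Lemma~\ref{lem:loc1Lipschitz} to restore local $1$-Lipschitzness), but the central step — making the center independent of the residue-field parameter — has a genuine gap.

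Selecting a single $\xi_0(w)$ by Skolem and setting $c(w):=\tilde c(w,\xi_0(w))$ does not give the required configuration statement. The set $\tilde g^{-1}(\eta)_w$ is in $\tilde c(w,\xi(\eta))$-config, and for those $\eta$ with $\tilde c(w,\xi(\eta))\neq c(w)$ it is generally \emph{not} in $c(w)$-config: in Definition~\ref{config} the center is pinned down uniquely, so two distinct centers cannot be interchanged. Enlarging $\tilde g$ to ``record the $\xi$'' refines the fibers but leaves the centers untouched and in particular does not produce a single $c(w)$ working for all $\eta$; moreover, partitioning $X$ by the $\xi$-value is not a priori a \emph{finite} partition, since $\xi$ ranges over a power of $k$. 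What is actually needed — and what the paper does — is to observe first that the range of $c_w:\eta\mapsto c(\eta,w)$ contains no open ball, hence is finite; then by tameness to place this finite set in $w$-definable bijection with a subset of $k^{s'}$; then by algebraic Skolem functions in $\RF$ (hence in $\VF$ via Lemma~\ref{lem-skolemization}) to obtain a $w$-definable finite partition of $k^s$, hence of $X_w$, on each piece of which $c(\cdot,w)$ is constant; and finally to invoke a compactness argument to glue these fiberwise partitions into a single finite definable partition of $X$ with the same property. Your proof names the finiteness and the Skolem step but omits both the reduction to a constant center on each piece and the compactness step that makes the partition uniform in $w$; these are precisely where the work lies. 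The concluding use of Lemma~\ref{lem:loc1Lipschitz} to regain local $1$-Lipschitzness of $c$ and $d$ after the fact is handled correctly.
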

The proof uses \cite[Theorem 2.1.8]{CCL-PW}, but only a weaker version is actually needed: we only need to require the centers to be locally 1-Lipschitz.
\begin{proof}
Apply Theorem  \cite[Theorem 2.1.8]{CCL-PW} to $f$. Work on one of the definable pieces $X$ of $A$ and use notations from the application of Theorem  \cite[Theorem 2.1.8]{CCL-PW}, which is similar to \ref{Lipcenter} except that the input of $c$ and $d$ may additionally depend on some $k$-variables.  We now show that these additional $k$-variables are not needed as input for $c$ and $d$. We first show (after possibly taking a finite definable partition of $X$) that $c(\cdot,w)$ and $d(\cdot,w)$ are constant.

Fix some $w\in p(X)$. Since the range of the $w$-definable function $c_w : \eta \in k^s \mapsto c(\eta,w)\in K$ does not contain an open ball, it must be finite. By tameness, there is a $w$-definable bijection $h_w$ between the range of $c_w$ and a subset of $B_w\subseteq k^{s'}$, for some $s'\in \NN$. By the existence of algebraic Skolem functions in the sort $\RF$ and hence also in $\VF$ by Lemma \ref{lem-skolemization}, each of the points of $B_w$ is $w$-definable. Taking the preimage of those points by $h_w\circ c_w$ leads to a $w$-definable finite partition of $k^s$. After taking preimages by $g$, it itself leads to a finite $w$-definable partition of $X_w$.  By compactness, we find a finite partition of $X$ such that on each piece, the function $c(g(x),p(x))$ is independent of $g(x)\in k^s$ and can be (abusively) written $c(p(x))$. The argument for $d$ is similar.

By Lemma \ref{lem:loc1Lipschitz}, we can refine the partition such that the functions $c,d : p(X)\to K$ are locally 1-Lipschitz.
\end{proof}


\begin{proof}[Proof of Theorem \ref{Lipcenter}] We proceed by induction on $n$.
Theorem \ref{Lipcenter} for $n=1$ is exactly Lemma \ref{lem-centers-indep} for $n=1$ since the Lipschitz condition is empty in this case. Assume now that Theorems \ref{Lip} and \ref{Lipcenter} hold for integers up to $n-1$. Apply Lemma \ref{lem-centers-indep}. On each of the definable pieces $X$ obtained, one has a coordinate projection $p$ and definable functions $c,d : p(X)\to K$ that are locally 1-Lipschitz. By Theorem \ref{Lip} for $n-1$, we have a finite definable partition of $p(X)$ such that $c$ and $d$ are 1-Lipschitz on each of the pieces. This induces a finite definable partition of $X$ satisfying the required properties.
\end{proof}

\begin{proof}[Proof of Theorem \ref{Lip}]
We work by induction on $n$, assuming  that Theorem \ref{Lipcenter} holds for integers up to $n$ and Theorem \ref{Lip} holds for integers up to $n-1$. For $n=0$ there is nothing to show, hence we assume $n\geq 1$. Write $p : X \to K^{n-1}$ for the coordinate projection sending $x=(x_1,\dots,x_n)$ to $\hat x=(x_1,...,x_{n-1})$, and define $Y$ as the image of $X$ under the function $ h : X\to K^n$ sending $x$ to $(\hat x,f(x))$.

Up to taking a finite definable partition of $X$, switching the variables, by induction on the number of variables on which $f$ depends, by Lemma \ref{lem:projection-full-dim} and Theorem \ref{Lipcenter}, tameness and compactness, we may assume that the following holds :
\begin{itemize}
\item X is open in $K^n$,
\item there is a definable function $g : X \to k^s$, and definable functions $c,d : p(X) \to K$,
\item for each $\hat x\in p(X)$ and $\eta\in k^s$,  $g^{-1}(\eta)_{\hat x}$ is in open $c(\hat x)$-config, $h(g^{-1}(\eta))_{\hat x}$ is in $d(\hat x)$-config,
\item the restriction of $f(\hat x,\cdot)$ to $g^{-1}(\eta)_{\hat x}$ is s-continuous for each $\hat x\in p(X)$ and $\eta\in k^s$,
\item the functions $c$ and $d$ are 1-Lipschitz,
\item the function $f(\cdot, x_n)$ is 1-Lipschitz for each $x_n$.
\end{itemize}
We show that under these assumptions, $f$ is 1-Lipschitz. Since $d$ is 1-Lipschitz, we can replace $f$ by $x\mapsto f(\hat x,x_n)-d(\hat x)$ (and translate $Y$ accordingly) in order to assume $d=0$.

Let $x,y\in X$ and assume first that both $x_n$ and $y_n$ lie in an open ball $B\subseteq X_{\hat x}$. Then $g(x)=g(\hat x, y_n)$, indeed otherwise $c(\hat x)\in B$, which would contradict that $g^{-1}(\eta)_{\hat x}$ is in open $c(\hat x)$-config for every $\eta\in k^s$. It follows that $f(\hat x, \cdot)$ is $s$-continuous on $B$. Since $f$ is locally 1-Lipschitz, the constant $\gamma$ involved in the definition of s-continuity on $B$ satisfies $\gamma\leq 1$.

Thus, using the ultrametric inequality and the assumption about $f(\cdot,y_n)$, we have :
\begin{eqnarray*}
 |f(x) - f(y)|
 & = & |f(x) - f(\hat x,y_n) + f(\hat x,y_n) - f(y)| \\
 & \leq & \max(|f(x) - f(\hat x,y_n)|,\  |f(\hat x,y_n) - f(y)| ) \\
 & \leq & \max(|x_n - y_n|,\  |\hat x - \hat y| ) \\
 & = & | x- y |,
   \end{eqnarray*}
which settles this case.

Suppose now that $x_n$ and $y_n$ do not lie in an open ball included in  $X_{\hat x}$, and by symmetry neither in an open ball included in $X_{\hat y}$. This implies that
\begin{equation}\label{equi000}
 |x_n-c(\hat x)| \leq |x_n - y_n| \mbox{ and }  |y_n-c(\hat y)|   \leq |x_n - y_n| .
\end{equation}

By s-continuity and the fact that $f$ is locally 1-Lipschitz, the image of a small enough open ball in $X_{\hat x}$ of radius $\alpha$ is either a point or an open ball of radius less or equal to $\alpha$. This implies that
\begin{equation}\label{equi00}
|f(x)-d(\hat x) | \leq   |x_n-c(\hat x)|  \mbox{ and }   |f(y) -d(\hat y)| \leq   |y_n - c (\hat y) |.
\end{equation}

Recall that $d=0$. Combining (\ref{equi000}) and (\ref{equi00}), we have by the ultrametric inequality
\[
|f(x) - f(y)|  \leq \max (|x_n-c(\hat x)|, |y_n - c (\hat y) | ) \leq | x_n- y_n | \leq | x- y |,
\]
which end the proof.
\end{proof}

\begin{remark}
Let us recall that \cite{cluckers_lipschitz_2010} and \cite{cluckers_approximations_2012}, with related results on Lipschitz continuity on $p$-adic fields, are amended in Remark 2.1.16 of \cite{CCL-PW}. When making $d=0$ it is important to keep $c$ possibly nonzero in the proof of \cite[Theorem 2.1.7]{CCL-PW} and in the above proof of Theorem \ref{Lip}; this was forgotten in the proofs of the corresponding results \cite[Theorems 2.3]{cluckers_lipschitz_2010} and \cite[Theorem 3.5]{cluckers_approximations_2012}, where $c$ should also have been kept.
\end{remark}

\section{Analytic parametrizations}
\label{section_analytic_parametrizations}

The goal of this section is to prove a uniform version of non-Archimedean Yomdin-Gromov parametrizations.

\subsection{$T_r$-approximation}
\begin{setting}\label{setting-T_DP}
We fix for the whole section one of the two following settings, of $\cT_\mathrm{DP}$, or, $\cT^\mathrm{an}_\mathrm{DP}$, both of which we now introduce. Let $\cO$ be the ring of integers of a number field. Recall that the Denef-Pas language is a three sorted language, with one sort $\VF$ for the valued field with the ring language, one sort $\RF$ for the residue field with the ring language, one sort $\VG$ for the value group with the Presburger language with an extra symbol for $\infty$, and function symbols $\ord : \VF\mapsto \VG$ for the valuation (sometimes denoted multiplicatively $\abs{\cdot}$) and $\ac : \VF\to \RF$ for an angular component map (namely a multiplicative map sending $0$ to $0$ and sending a unit of the valuation ring to its reduction modulo the maximal ideal). Consider the theory of henselian discretely valued fields of residue field characteristic zero in the Denef-Pas language, with constants symbols from $\cO\llb t\rrb$ and with $t$ as a uniformizer of the valuation ring. This theory is tame by Theorem 6.3.7 of \cite{cluckers_fields_2011}. Applying Lemma \ref{lem-skolemization}, one obtains a new language and  a new theory which we denote by
$\cL_\mathrm{DP}$ and $\cT_\mathrm{DP}$, which thus has algebraic Skolem functions in each of the sorts.

We can also work in an analytic setting, as follows. Consider the expansion of the Denef-Pas language $\cL_\mathrm{DP}$ by adding function symbols for elements of
\[\cO\llb t\rrb\{x_1,\dots,x_n\}=\set{f=\sum_{I\in \NN^n} a_I x^I\mid a_I\in \cO\llb t\rrb,  \ord_t(a_I)\underset{\abs{I}\to +\infty}{\to} +\infty}.
\]
Any complete discretely valued field over $\cO$ (namely, with a unital ring homomorphism from $\cO$ into the valued field) can be endowed with a structure for this expansion, by interpreting the new function symbols as the corresponding power series evaluated on the unit box and put equal to zero outside the unit box.
Let $\cL_\mathrm{DP}^\mathrm{an}$ and $\cT^\mathrm{an}_\mathrm{DP}$ the resulting language, resp.~the theory of these models. (For a shorter and explicit axiomatization for the analytic case,  see the axioms of Definition 4.3.6(i) of \cite{cluckers_fields_2011}.)

For now on, we work in a language $\cL$ that is either $\cL_\mathrm{DP}$ or $\cL_\mathrm{DP}^\mathrm{an}$ and in the theory $\cT$ that is correspondingly $\cT_\mathrm{DP}$ or $\cT^\mathrm{an}_\mathrm{DP}$.

Let us summarize our theory once more: $\cT$ is the $\cL$-theory which is the algebraic skolemization in the residue field sort of the theory of complete discrete valued fields, residue field of characteristic zero, with constants symbols from $\cO\llb t\rrb$ (as a subring) and where $t$ has valuation $1$, and (in the subanalytic case), with the restricted analytic function symbols as the corresponding power series evaluated on the unit box and put equal to zero outside the unit box.

In any case, the theory $\cT$ is tame by Theorem 6.3.7 of \cite{cluckers_fields_2011}, and, it has algebraic Skolem functions in each sort by Lemma \ref{lem-skolemization} and by Example 4.4(1) with the homothecy with factor $t$ on the valuation ring to make the system strict instead of separated. Note that there is no need to algebraically skolemize again when going from $\cT_\mathrm{DP}$ to the larger theory $\cT^\mathrm{an}_\mathrm{DP}$ by the elimination of valued field quantifiers from Theorem 6.3.7 of \cite{cluckers_fields_2011}.
Definable means definable without parameters in the theory $\cT$.
\end{setting}

\begin{defn}[$T_r$-approximation] Let $L$ be any valued field. Consider a set $P\subseteq L^m$, a function $f=(f_1,\dots,f_n) : P\to \cO_L^n$ and an integer $r > 0$.
We say that $f$ satisfies \emph{$T_r$-approximation} if $P$ is open in $L^m$, and, for each $y\in P$, there is an $n$-tuple $T_{f,y}^{<r}$ of polynomials with coefficients in $\cO_L$ and of degree less than $r$ that satisfies, for all $x\in P$,
\[
\abs{f(x)-T_{f,y}^{<r}(x)}\leq \abs{x-y}^r.
\]

We say that a family $(g_i)_{i\in I}$ of functions $g_i : P_i \to X_i  \subseteq  \cO_L^n $ is a $T_r$-parametrization of $X=\bigcup_{i\in I} X_i$ if each $g_i$ is surjective and satisfies $T_r$-approximation.
\end{defn}
Observe that if $f$ satisfies $T_r$-approximation, then the polynomials $T_{f,y}^{<r}$ are uniquely determined.

Observe also that if $K$ is a complete valued field of characteristic zero, if $f$ is of class $\cC^r$ and satisfies  $T_r$-approximation, then $T_{f,y}^{<r}$ is just the tuple of Taylor polynomials of $f$ at $y$ of order $r$.

\begin{notation}\label{sec:not}
Let $\cO$ be the ring of integers of a number field. We denote by $\cA_\cO$ the collection of all local fields of characteristic zero over $\cO$, $\cB_\cO$ the collection of all local field of positive characteristic  over $\cO$, and set $\cC_\cO=\cA_\cO\cup \cB_\cO$. (By a local field $L$ over $\cO$ we mean a non-archimedean locally compact field, hence, a finite field extension of $\QQ_p$ or of $\FF_p\llp t\rrp$ for a prime $p$, allowing a unital homomorphism $\cO\to L$.) If $L\in \cC$, we denote by $\ord$ its valuation (normalized such that $\ord(L^\times)=\ZZ$), $\cO_L$ its valuation ring, $\cM_L$ its maximal ideal, $\varpi_L\in \cM_L$ a fixed choice of uniformizer, $k_L$ its residue field, $q_L$ the cardinal of $k_L$ and $p_L$ the characteristic of $k_L$. If $N\in \NN$, we define $\cA_{\cO,N}$ (resp. $\cB_{\cO,N}$, resp. $\cC_{\cO,N}$) to be the set of $L\in \cA_{\cO}$ (resp. $L\in\cB_{\cO,N}$, resp. $L\in \cC_{\cO,N}$) such that $p_L\geq N$. By Remark \ref{rem-skolem-localfields}, we can consider $L\in \cC_\cO$ as an $\cL$-structure, and any non-principal ultraproduct of such local fields is a model of $\cT$.
\end{notation}

Call a family of definable sets a definable family, if the index set and the total set are both definable, namely, a family $(X_y)_{y\in Y}$ of definable sets $X_y$ indexed by $y\in Y$ is called a definable family if $Y$ and the total set $\cX:=\{(x,y)\mid x\in X_y,\ y\in Y\}$ are definable sets. Likewise, a family of definable functions is called a definable family if the family of graphs is a definable family of definable sets.
We use notations like $\cO_\VF$ for the definable set which in any model $K$ is the valuation ring $\cO_K$, and similarly $\cM_\VF$ for the maximal ideal, and so on. For a definable set $X$ and a structure $L$, we will write $X(L)$ for the $L$-points on $X$, and, for a definable function $f:X\to Y$ we will write $f_L$ for the corresponding function $X(L)\to Y(L)$.\footnote{When we interpret definable sets or functions into local fields $L$ (or, more generally, $\cL$-structures that are not models of our theory $\cT$), we implicitly assume that we have chosen some formula $\varphi$ that defines the set and consider $\varphi(L)$. This set $\varphi(L)$ may of course change with a different choice of formula $\varphi$ for small values of the residue field characteristic of $L$, but this is not a problem by Remark \ref{rem-skolem-localfields}, and since we are interested only in the case of large residue field characteristic.}

The main goal of this section is to prove the following two theorems on the existence of $T_r$-parameterizations with rather few maps, in terms of $r$. Even the mere finiteness of the parameterizing maps is new, as compared to \cite{CCL-PW} where `residue many' maps were allowed, but we even get an upper bound which is polynomial in $r$. Recall from Setting \ref{setting-T_DP} that we work in a theory with algebraic Skolem functions.

\begin{thm}[Uniform $T_r$-approximation in local fields]\label{uniform-Tr-approx}
Let $n\geq 0$, $m\geq 0$ be integers and let $X=(X_y)_{y\in Y}$ be a definable family of definable subsets $X_y \subseteq \cO_\VF^n$, for $y$ running over a definable set $Y$. Suppose that $X_y$ has dimension $m$ for each $y\in Y$ (and in each model of $\cT$). Then there exist integers $c>0$ and $M>0$ such that for each $L\in \cC_{\cO,M}$ and for each integer $r>0$
, there are a finite set $I_{r,q}$ of cardinality $cr^m$
and a definable family $g=(g_{y,i})_{(y,i)\in Y(L)\times I_r}$ of definable functions
$$
g_{y,i}:P_{y,i}   \to X_y(L)
$$
with $P_{y,i} \subseteq \cO_L^{m}$ such that for each $y\in Y(L)$, the family $(g_{y,i})_{i\in I_{r,q}}$ forms a $T_r$-parametrization of $X_y(L)$.
\end{thm}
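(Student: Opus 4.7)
The plan is to first establish a stronger version of the theorem at the level $r=1$ (a $T_1$-parametrization with an extra analytic condition, namely the theorem referred to as Theorem \ref{strong-T1-approx}), and then to bootstrap from $T_1$ to $T_r$ by precomposition with ``power functions'' that exploit the analyticity. Throughout, the theory $\cT$ from Setting \ref{setting-T_DP} is tame with algebraic Skolem functions in both sorts by Lemma \ref{lem-skolemization}, which is decisive: it forces the parametrizing maps at each step to be a \emph{finite} definable family rather than one indexed by the residue field.

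For the $T_1$-step, I would argue by induction on the dimension $m$. For each $X_y$, apply cell decomposition / preparation in $\cL$ and then the local-to-global Lipschitz theorems (Theorems \ref{Lip} and \ref{Lipcenter}) of Section \ref{sec-GLC} to partition $X_y$ into finitely many definable pieces, each of which is the image of an open definable subset $P \subseteq \cO_\VF^m$ under a $1$-Lipschitz definable function $f$. The extra condition $(*)$ should be arranged as a syntactic/formula-level condition on $f$ so that, once interpreted in any local field $L$ of sufficiently large residue characteristic, $f_L$ becomes analytic on every box of $P(L)$, with Taylor coefficients satisfying a uniform norm bound $C$ depending only on the formula defining $f$. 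By compactness (non-principal ultraproducts of $L \in \cC_{\cO,M}$ are models of $\cT$ by Remark \ref{rem-skolem-localfields}), both the finiteness of the parametrization and the uniformity of $C$ in $L$ are automatic from their counterparts in $\cT$.

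Given such a $T_1$-parametrization with $(*)$, I upgrade to $T_r$ as follows. Fix a parametrizing $f\colon P \to X_y(L)$, write its expansion on a box as $\sum_I a_I x^I$ with $|a_I|\leq C$, and subdivide $\cO_L^m$ into the $q_L^{mk}$ sub-boxes of radius $|\varpi_L|^k$ for an integer $k$ to be chosen. On each sub-box with center $a$, reparametrize it to $\cO_L^m$ by precomposing $f$ with the ``power'' substitution $u \mapsto a + \varpi_L^k u$, obtaining $\tilde f(u) = f(a + \varpi_L^k u)$, whose Taylor coefficients at any point are multiplied by $\varpi_L^{k|J|}$ and hence have norms $\leq C \cdot q_L^{-k|J|}$. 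Choosing $k$ so that $q_L^k \geq C$ forces all of these coefficients to lie in $\cO_L$; the ultrametric inequality applied to the Taylor expansion of $\tilde f$ at any $v \in \cO_L^m$ then immediately gives $|\tilde f(u) - T_{\tilde f,v}^{<r}(u)| \leq |u-v|^r$ for every $r \geq 1$. A careful bookkeeping on the number of sub-boxes (taking $k$ of order $\log_{q_L} r$ when $r$ is large compared to $C$, and of order $\log_{q_L} C$ otherwise) yields at most $c r^m$ pieces, uniformly in $q_L$ and in $y$.

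The main obstacle will be the $T_1$-step itself, namely proving the strengthened parametrization with condition $(*)$ and, crucially, extracting the uniform bound $C$ on the Taylor coefficients. On each cell of a cell decomposition in $\cL_\mathrm{DP}^\mathrm{an}$ the parametrizing map is built out of restricted analytic functions from $\cO\llb t\rrb\{x_1,\ldots,x_n\}$, so analyticity on boxes is the easy part; the content of $(*)$ is to control their compositions, reparametrizations, and the size of the resulting Taylor data along the entire parametrization. I expect this to require a rather delicate interplay between a preparation theorem for cells cut out by ``prepared'' analytic functions, the global Lipschitz results of Section \ref{sec-GLC} (used to normalize the $1$-Lipschitz centers on each cell), and the Ax--Kochen--Ershov-style transfer built into $\cT$ that allows bounds derived in characteristic-zero models to be pushed to any $L\in\cC_{\cO,M}$, including those of positive characteristic.
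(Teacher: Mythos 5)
Your high-level plan is aligned with the paper: first establish a strong $T_1$-parametrization with condition $(*)$ using the Lipschitz machinery of Section~\ref{sec-GLC} (this is Theorem~\ref{strong-T1-approx}), then upgrade to $T_r$ by precomposition. The gap is in the upgrade step, and it is a genuine one.

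Your upgrade rests on the claim that on each box of the domain the Taylor coefficients of $f$ satisfy a \emph{uniform} bound $|a_I|\le C$, with $C$ independent of the box. This is false, and it fails precisely because the domain of the $T_1$-parametrization is a cell around zero (a union of boxes of arbitrarily small radius), not a single box. On a box $B=b(1+\cM_L)$ of valuative radius $\beta$ (so $|b|=q^{-\beta}$), the $1$-Lipschitz condition only forces $|a_I|\le |b|^{1-|I|}=q^{\beta(|I|-1)}$, which is unbounded as $\beta\to\infty$. A concrete instance: $f(x)=\varpi^2/x$ on $\{|x|=|\varpi|\}$ is $1$-Lipschitz (it appears, for example, when parametrizing $\{xy=\varpi^2\}$ by the inverse of a projection), but its $n$-th Taylor coefficient at a center $b$ with $|b|=|\varpi|$ has norm $q^{n-1}$. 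No compactness argument can give a uniform $C$ here, because the statement already fails in any single fixed model. Consequently the step ``choose $k$ with $q_L^k\ge C$ to make all rescaled coefficients integral'' has no content.

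This breaks the rest of the argument: after a uniform subdivision of $\cO_L^m$ into $q_L^{mk}$ sub-boxes, the rescaled coefficients $\tilde a_I=a_I\varpi_L^{k|I|}$ are bounded by $q_L^{\beta(|I|-1)-k|I|}$, which is $\le 1$ only if $k\gtrsim\beta$. So the subdivision parameter must grow with the valuative radius of the box you are sitting in, which is unbounded over the cell around zero; a uniform choice of $k$ gives you pieces whose image under $u\mapsto a+\varpi_L^k u$ is not even contained in the domain. Your bookkeeping leading to $cr^m$ pieces is also unmotivated once one accepts the false premise: if the rescaled coefficients were bounded by $1$, you would get $T_r$ for \emph{every} $r$ at once with $O(1)$ pieces, and there would be no reason for $k$ to grow like $\log_{q_L} r$. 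The paper resolves the scale issue by precomposing with the (normalized) $r$-th power maps $p_{r,j}:x\mapsto(t^{j_1}\xi(x_1)x_1^r,\dots,t^{j_m}\xi(x_m)x_m^r)$. The crucial feature of $x\mapsto x^r$ is that it carries a ball $b'(1+\cM_L)$ to the ball $b'^r(1+\cM_L)$, so the rescaling automatically adapts to the local radius, uniformly over all boxes of the cell. The coefficient estimate after this precomposition is Lemma~\ref{lem-composition-rth-powers-multdim} (which gives $|c_k|\le |b'|_{\min,k}^r\,|b'^k|^{-1}$, not a constant bound), and the residual work — showing $T_r$ holds on the entire cell and not merely box-by-box — requires the ultrametric gluing argument across boxes that occupies the second half of the paper's proof. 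The $r^m$ factor in the piece count then comes from the index $j\in\{0,\dots,r-1\}^m$, not from a $\log_q$-bookkeeping.
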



The following result is uniform in all models $K$ of $\cT$. 
Note that $\cT$ requires in particular the residue field to have characteristic zero, and the value group to be elementarily equivalent to $\ZZ$.

\begin{thm}[Uniform $T_r$-approximation for models of $\cT$]\label{uniform-Tr-approx-char0}
Let $n\geq 0$, $m\geq 0$ be integers and let $X=(X_y)_{y\in Y}$ be a definable family of definable subsets $X_y \subseteq \cO_\VF^n$, for $y$ running over a definable set $Y$. Suppose that $X_y$ has dimension $m$ for each $y\in Y$ and each model of $\cT$. Then there exists an integer $c>0$ such that for each model $K$ of $\cT$ and for each integer $r>0$ such that the $r$-th powers in the residue field have a finite number $b_r=b_r(K)$ of cosets, there are a finite set $I_{r}$ of cardinality $c (b_r r)^m$ and a $R_r$-definable family $g=(g_{y,i})_{(y,i)\in Y(K)\times I_r}$ of $R_r$-definable functions
$$
g_{y,i}:P_{y,i}   \to X_y(K)
$$
with $P_{y,i} \subseteq \cO_K^{m}$ such that for each $y\in Y(K)$, the family $(g_{y,i})_{i\in I_{r}}$ forms a $T_r$-parametrization of $X_y(K)$ and where $R_r\subset \cO_K^\times$ is a set of lifts of representatives for the $r$-th powers in $k^\times$.
\end{thm}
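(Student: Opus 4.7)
The proof will closely follow the argument that I expect for Theorem \ref{uniform-Tr-approx}; the essential difference is that in a general model of $\cT$ the number $b_r$ of $r$-th power cosets in $k^\times$ is not bounded by a function of $r$ alone, and hence appears explicitly in the count of pieces. The plan is in two movements: first reduce to a $T_1$-parametrization satisfying the regularity condition $(*)$, and then upgrade to $T_r$-approximation by precomposing each coordinate of the domain with an $r$-th power monomial, suitably rescaled by a power of the uniformizer and a lift of an $r$-th power coset representative.

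I would begin by applying Theorem \ref{strong-T1-approx} to the definable family $(X_y)_{y \in Y}$, obtaining an absolute constant $c_0$ and a definable family of maps $g_{y,j}\colon U_{y,j} \subseteq \cO_{\VF}^m \to X_y$, with $j$ in an index set of size $c_0$, each satisfying $T_1$-approximation together with condition $(*)$ of Definition \ref{def-condition-star}. The role of condition $(*)$ is to guarantee that on every box contained in $U_{y,j}$, the map $g_{y,j}$ is given by a power series with coefficients in $\cO$ which converges in every model of $\cT$. Now, for a fixed $g = g_{y,j}$ and fixed $r$, let $R_r \subset \cO_K^\times$ be the given set of lifts of the $r$-th power coset representatives of $k^\times$. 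For each $(\zeta, i) \in (R_r \times \{0, \ldots, r-1\})^m$ consider the monomial reparametrization
\[
\phi_{\zeta, i}(u_1, \ldots, u_m) = \bigl(\zeta_1 \varpi^{i_1} u_1^r,\ \ldots,\ \zeta_m \varpi^{i_m} u_m^r\bigr),
\]
and set $\tilde g_{\zeta, i} := g \circ \phi_{\zeta, i}$ on $\phi_{\zeta, i}^{-1}(U_{y,j})$. A direct check shows that the images of the $\phi_{\zeta, i}$ cover $\cO^m$ away from the coordinate hyperplanes; the hyperplane locus is of strictly smaller dimension and is handled by induction on $m$, contributing only lower-order terms. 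The total number of pieces is $c_0 (b_r r)^m$ up to these lower-order corrections, which are absorbed into a single constant $c$, giving the required bound $c(b_r r)^m$. Definability over $R_r$ is visible from the formula for $\phi_{\zeta, i}$.

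The heart of the argument is verifying $T_r$-approximation for each $\tilde g_{\zeta, i}$. Since $g$ expands as a power series with coefficients in $\cO$ on each box, and $\phi_{\zeta, i}$ is a polynomial of degree $r$ in each variable with coefficients in $\cO$, the composition is again a power series with coefficients in $\cO$. Its degree-$(r-1)$ Taylor truncation at any $y_0$ provides the polynomial $T^{<r}_{\tilde g, y_0}$; the residual is a sum of monomials of total degree at least $r$ with $\cO$-coefficients, so by the ultrametric inequality its absolute value on $\cO^m$ is bounded by $|y - y_0|^r$. I expect the main obstacle to lie precisely in this composition step: one must extract from condition $(*)$ the exact analytic control on $g$ needed to make this power-series argument valid in every model of $\cT$, including those with exotic residue fields of characteristic zero and arbitrarily many $r$-th power cosets. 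The combinatorics of piece-counting and the induction handling the coordinate-hyperplane locus are routine in comparison.
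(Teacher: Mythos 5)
Your overall plan -- apply the strong $T_1$-parametrization of Theorem~\ref{strong-T1-approx}, then precompose coordinate-by-coordinate with maps of the form $u \mapsto \zeta\,\varpi^{i}u^r$, indexed by coset representatives $\zeta\in R_r$ and shifts $i\in\{0,\dots,r-1\}$ -- is exactly the strategy of the paper, and the count $c(b_r r)^m$ is correctly traced to the cardinality of $(R_r\times\{0,\dots,r-1\})^m$. However, there are two genuine gaps in the verification of $T_r$-approximation, one factual and one structural.

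The factual gap: it is \emph{not} true that $g$ expands as a power series with $\cO$-coefficients on a box $B=\prod b_i(1+\cM)$. What strong $T_1$-approximation actually gives, via the $T_1$-bound on the associated box and the comparison of Gauss norm and sup norm, is $\abs{a_i}\leq\abs{b}^{1-i}$, which for $i\geq 2$ is typically \emph{larger} than $1$. After precomposing with an $r$-th power map centered at $b'$ with $b'^r=b$, the new coefficients satisfy $\abs{c_k}\leq\abs{b'}^{r-k}$; only the coefficients with $k\leq r$ land in $\cO$, while the high-order coefficients may again be large. This is precisely the content of Lemmas~\ref{lem-composition-rth-powers} and~\ref{lem-composition-rth-powers-multdim}, and the $T_r$-estimate on a box follows from those specific bounds (combined with the factorization trick $\abs{c_k(x-b')^k}\leq\abs{b'^{\underline r}}\abs{b'^k}^{-1}\abs{(x-b')^{k-\underline r}}\abs{x-b'}^r$), not from ``integrality plus ultrametric inequality.'' The multidimensional version in Lemma~\ref{lem-composition-rth-powers-multdim} is also not the naive maximum-modulus bound, as the paper warns; it requires a coordinate-by-coordinate argument.

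The structural gap, which is the more serious one: your argument, even once corrected, only establishes $T_r$-approximation for pairs of points $x,y_0$ lying in a common box. But the domain $P_{y,i}$ is an open cell around zero -- a union of many boxes with varying angular components and orders -- and the power-series expansion of $g\circ\phi_{\zeta,i}$ changes from box to box. The definition of $T_r$-approximation requires the Taylor estimate to hold for arbitrary $x,y$ in $P_{y,i}$, including points in different boxes. The paper's proof of Theorem~\ref{uniform-Tr-approx} devotes a substantial ultrametric argument to precisely this gluing step: it introduces auxiliary points $v$ and $w$ interpolating between $x$ and $y$, uses global $T_1$-approximation for $g$ to control $\abs{g(dx^r)-g(d'w^r)}$, uses the box-level $T_r$-estimate for $\abs{\bar g(w)-T^{<r}_{\bar g,y}(w)}$, and then separately bounds $\abs{T^{<r}_{\bar g,y}(w)-T^{<r}_{\bar g,y}(x)}$ by splitting the index set according to which coordinates satisfy $\abs{y_i}\leq\abs{x_i-y_i}$ and invoking the coefficient bound again. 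This passage from boxes to the whole cell is the technical heart of the argument, and it is absent from your proposal. The concluding remark of the paper's proof of Theorem~\ref{uniform-Tr-approx-char0} -- that the only change from Theorem~\ref{uniform-Tr-approx} is to drop the compactness step and to use $R_r$-parameters instead of the definable choice $\xi$ -- presupposes that this entire ultrametric gluing has been carried out.
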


\begin{remark}
Observe that even if Theorems \ref{uniform-Tr-approx} and \ref{uniform-Tr-approx-char0} are very similar, one cannot deduce the first from the second by compactness. The reason is the quantification over $r$ in the statement.  They will however both be deduced from the upcoming Theorem \ref{strong-T1-approx}, which is a $T_1$-parametrization theorem with an extra technical condition. It will allow us to define a $T_r$-parametrization by precomposing by power functions. Furthermore, note that in Theorem \ref{uniform-Tr-approx}, the factor  $b_r$ for the index of $r$th powers in the residue field is not needed; this is because of an additional trick using a property true in  finite fields. \end{remark}

\begin{remark}
For most of the section, we could in fact work in a slightly more general setting (up to imposing some additional requirements for Theorem \ref{uniform-Tr-approx}). Using resplendent relative quantifier elimination as in \cite{Rid}, we can add arbitrary constants symbols and allow an arbitrary residual extension (and an arbitrary extension on the value group) of the language and the theory before applying the algebraic Skolemization in the residue field sort. In particular, \ref{uniform-Tr-approx-char0} holds in this more general setting. If the extended language and theory still have the property that any local field can be equipped with a structure for the extended language such that moreover any ultraproduct of such equipped local fields which  is of residue characteristic zero is a model of the extended theory, then also Theorem \ref{uniform-Tr-approx} would go through.
\end{remark}

\begin{remark}
The condition that the value group be a Presburger group can probably be relaxed to any value group in which the index $v_r$ of the subgroup of $r$-multiples is finite, by replacing $c (b_r r)^m$ by $c(b_rv_r)^m$ for the cardinality of $I_{r}$ and taking  $R_r\cup V_r$ instead of $R_r$ with $V_r$ a set of lifts of representatives for the $r$-multiples in the value group.

Note that an adaptation of Theorem \ref{uniform-Tr-approx-char0}  and its proof to mixed characteristic henselian valued fields may be possible too, with the adequate adaptations. For example, when going from local to piecewise Lipschitz continuous, the Lipschitz constant should be allowed to grow. (Indeed, look at the function $x\mapsto x^p$ on the valuation ring of $\CC_p$.)
\end{remark}

Before starting the proofs of Theorems \ref{uniform-Tr-approx} and \ref{uniform-Tr-approx-char0}, we need a few more definitions.

\begin{defn}[Cell with center]\label{cellc}
Consider an integer $n\geq 0$.
For non-empty definable sets $Y$ and $X\subseteq Y\times \VF^n$, the set $X$ is called a cell over $Y$ with center $(c_i)_{i=1, \cdots, n}$ if it is of the form
$$
\{(y,x) \in Y\times \VF^n\mid y\in Y,\ \ac(x_i-c_i(x_{<i}))=\xi_{i}(y),\, (y,(|x_i-c_i(x_{<i})|)_i)\in G  \},
$$
for some set $G\subseteq Y\times \VG^n$ and some definable functions $\xi_i: Y\to k$ and $c_i:Y\times \VF^{i-1}\to \VF$, where $x_{<i}=(y,x_1,\ldots,x_{i-1})$. If moreover $G$ is a subset of $Y\times (\VG^\times)^n$, where $(\VG^\times)^0=\{0\}$, then $X$ is called an open cell over $Y$
(with center $(c_i)_{i=1, \cdots, n}$).
\end{defn}

\begin{defn}[Cell around zero]
We say that $X\subseteq \VF^n$ is a cell around zero if it is of the form
\[
X=\set{x=(x_1,\dots,x_n)\in \VF^n\mid \ac(x)\in A, (\abs{x_1},\cdots,\abs{x_n})\in B}
\]
for some definable sets $A\subseteq \RF^n$ and $B\subseteq \VG^n$.
Similarly one can call a set $X$ a cell around zero for $X\subset L^n$ for some valued field $L$  with an angular component map, if it is of the corresponding form.
\end{defn}

\begin{defn}[Associated cell around zero]\label{cell0}
Let $X$ be a cell over $Y$ with center, with notation from Definition \ref{cellc}. The cell around zero associated to $X$ is by definition the cell $X^{(0)}$ obtained by forgetting the centers, namely
$$
X^{(0)}=\{(y,x) \in Y\times \VF^n\mid y\in Y,\ \ac(x_i)=\xi_{i}(y),\, (y,(|x_i|)_i)\in G  \}
$$
with associated bijection  $\theta_X:X\to X^{(0)}$ sending $(y,x)$ to $(y,(x_i-c_i(x_{<i}))_i)$.
For a definable map $f:X\to Z$ there is the natural corresponding function $f^{(0)}= f\circ \theta_X^{-1}$ from $X^{(0)}$ to $Z$.
\end{defn}

%

We now define the term language. This is an expansion $\cL^*$ of $\cL$, by joining division and witnesses for henselian zeros and roots.

\begin{defn} \label{hf1}
Let $\cL^*$ be the expansion of $\cL\cup\{^{-1}\}$ obtained by joining to $\cL\cup\{^{-1}\}$ function symbols $h_{m}$ and $\mathrm{root}_m$ for integers $m>1$, where on a henselian valued field $K$ of equicharacteristic zero and residue field $k$ these functions are the functions
\[
h_{m}:K^{m+1}\times k \to K
\]
sending $(a_0,\ldots,a_{m},\xi)$ to the unique $y$ satisfying
$\ord(y)=0$, $\ac(y)\equiv \xi\bmod \cM_K$, and
$\sum_{i=0}^{m} a_{i} y^i=0$, whenever $\xi$ is a unit,
$\ord(a_i)\geq 0$, $\sum_{i=0}^m a_{i} \xi^i\equiv 0\bmod \cM_K$, and
 \[
 f'(\xi)\not\equiv0\bmod \cM_K
 \]
with $f'$
the derivative of $f$, and to $0$ otherwise. Likewise, $\mathrm{root}_m$ is the function $K\times k\to K$ sending $(x,\xi)$ to the unique $y$ with $y^m=x$ and $\ac(y)=\xi$ if there is such $y$, and to zero otherwise.
\end{defn}

\begin{prop}[Term structure of definable functions]
\label{terms} Every $\VF$-valued definable function is piecewise given by a term.
More precisely, given a definable set $X$ and a definable function $f:X\to \VF$, there exists a finite partition of $X$ into definable parts and for each part $A$ an $\cL^*$-term $t$ such that
$$
t(x) = f(x)
$$
for all $x\in A$.
\end{prop}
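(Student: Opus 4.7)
The plan is to combine the elimination of valued field quantifiers in $\cT$ (Theorem~6.3.7 of \cite{cluckers_fields_2011}) with Denef--Pas cell decomposition, exploiting that $\cL^*$ contains witness symbols $h_m$, $\mathrm{root}_m$ and $^{-1}$ for all the distinguished roots that arise in such a decomposition. Concretely, I would consider the graph $\Gamma_f \subseteq X\times\VF$ of $f$ and apply cell decomposition in the last $\VF$-variable $y$. Since $\Gamma_f$ is the graph of a function, each fiber $(\Gamma_f)_x$ is a singleton, which forces every cell of the decomposition to be of graph type in $y$, that is, of the form $\{(x, c(x)) : x \in X_i\}$ for some definable center $c \colon X_i \to \VF$ and a finite partition $(X_i)$ of $X$. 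Thus $f|_{X_i} = c$, and the problem reduces to showing that, after a further finite refinement of the $X_i$, each such center $c$ is given by an $\cL^*$-term in $x$.

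For this last point one unwinds the standard proof of cell decomposition. In the algebraic case it proceeds by iteratively applying Hensel's lemma to polynomials in the last variable whose coefficients are $\cL^*$-terms in the remaining variables, and the distinguished root produced at each step is a value of $h_m(\cdot, \xi)$ or of $\mathrm{root}_m(\cdot, \xi)$ for residue-field data $\xi$ coming from the angular component specification of the cell. In the analytic case one additionally applies Weierstrass preparation, which in $\cL_{\mathrm{DP}}^{\mathrm{an}}$ is itself term-definable and reduces to the polynomial situation. An induction on the number of $\VF$-variables then expresses every center as an $\cL^*$-term, the algebraic Skolem functions in the $\RF$ sort (already present in $\cL$) serving to replace each definable choice of residue-field parameter $\xi$ by a piecewise constant on a further refinement of the $X_i$.

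The main obstacle is the bookkeeping in the cell decomposition: at each recursive step one must ensure that the newly introduced center comes from an explicit application of one of the symbols $h_m$, $\mathrm{root}_m$ or $^{-1}$, rather than from some auxiliary definable function. A convenient way to arrange this is to run cell decomposition and Pas quantifier elimination in parallel, so that every elimination of a $\VF$-quantifier in the defining formula of $\Gamma_f$ either yields no new center, or yields one of the form $h_m(\cdots)$ or $\mathrm{root}_m(\cdots)$ with arguments that are already $\cL^*$-terms; the remaining Boolean combination and field-arithmetic operations are then absorbed into further $\cL^*$-terms at no additional cost.
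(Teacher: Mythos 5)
Your outline is correct in spirit but takes a different, heavier route than the paper. The paper's proof is essentially a one-liner: it invokes Theorem~7.5 of Cluckers--Lipshitz--Robinson \cite{CLR_analytic_2006}, which already supplies a definable $g:X\to\RF^{m}$ and an $\cL^*$-term $t_{0}$ with $t_{0}(x,g(x))=f(x)$, and then observes that, since $h_{m}$ and $\mathrm{root}_{m}$ admit only finitely many residue-field choices, $g$ can be taken with finite image and the algebraic Skolem functions in $\RF$ make its fibers (and the selected residue-field values) $\emptyset$-definable, so they can be absorbed into the partition. Your proposal instead re-derives this term structure from scratch: cell-decompose the graph of $f$, observe the cells must be of graph type, then unwind cell decomposition and Pas quantifier elimination to check that the centers are built from Hensel witnesses, roots and inversion. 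That unwinding is, in substance, a re-proof of the cited CLR result, and the ``bookkeeping'' you flag is its whole content; note also that the reduction $f\vert_{X_{i}} = c$ is by itself circular, since $c$ is a definable function of the same variables as $f$, so what matters is that the \emph{constructive} decomposition produces centers of the prescribed shape, not the mere existence of some cell decomposition. Both arguments converge on the same final step --- the finiteness of Hensel and root choices plus the Skolem functions in $\RF$ to eliminate the residue-field parameters --- which is exactly why this proposition lives in the Skolemized theory of Section~2.
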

\begin{proof}
By Theorem 7.5 of \cite{CLR_analytic_2006} there exists a definable function $g:X\to \RF^{m}$ for some $m\geq 0$ and an $\cL^*$-term $t_0$ such that
$$
t_0(x,g(x)) = f(x).
$$
Since the terms $h_n$ (the henselian witnesses) and $\mathrm{root}_n$ (the root functions) involve at most a finite choice in the residue field, one  can reduce to the case that $g$ has finite image. The fibers of $g$ can then be taken as part of the partition to end the proof.
\end{proof}

\subsection{Condition $(*)$}\label{sec:cond*}
We now introduce a technical condition, named $(*)$, that will be used in Section \ref{sec:analyci} to show a strong form of analyticity of definable functions, named global analyticity in Definition \ref{def:globalan}.

\begin{defn}[Condition $(*)$]
\label{def-condition-star}
We first define condition $(*)$ for $\cL^*$-terms, 
inductively on the complexity of terms.
Consider a definable set $X\subseteq \VF^m$ and let $x$ run over $X$.

We say that a $\VF$-valued $\cL^*$-term $t(x)$ satisfies condition $(*)$ on $X$ if the following holds.

If $t(x)$ is a term of complexity 0 (\emph{i.e.} a constant or a variable), then it satisfies condition $(*)$ on $X$.

Suppose now that the term $t$ is either $t_1+t_2$, $t_1\cdot t_2$, $t_0^{-1}$, $h_n(t_0,\dots,t_n;t_{-1})$, $\mathrm{root}_n(t_0;t_{-1})$ for some $n>0$, or of the from $\underline f(t_1,\dots,t_n)$, with $\underline f$ one of the analytic functions of the language. In the first two cases, we just require that $t_1$ and $t_2$ satisfy condition $(*)$ on $X$. In the remaining four cases, we require that $t_0,\dots, t_n$ satisfy condition $(*)$ on $X$ and moreover that for any box $B\subseteq X$, the functions $t_{-1}$ and $\ac(t_0),\dots,\ac(t_n), \ord(t_0),\dots,\ord(t_n)$ are constant on $B$.

We finally say that an $\cL$-definable function $f : X\subseteq \VF^m\to \VF^{m'}$ for $m' > 0$ satisfies condition $(*)$ on $X$ if there is a tuple $t$ of  $\cL^*$-terms $t_i(x)$ satisfying condition $(*)$ on $X$ and such that $f(x)=t(x)$ for $x\in X$.
\end{defn}

The following lemma ensures existence of functions satisfying condition $(*)$.
\begin{lem}
\label{lem-conditionstar}
Let $f : X\subseteq Y\times \VF^m \to \VF^{m'}$ be a definable function for some $m$ and $m'$. Then there is a finite partition of $X$ into some open cells $A$ over $Y$ with center $(c_i)_{i=1, \cdots, m}$  and a set $B$ such that $B_y$ is of dimension less than $m$ for each $y\in Y$,  such that the function
$$
(A^{(0)})_y\to \VF^{m'} : x\mapsto f^{(0)}(y,x)
$$
satisfies condition $(*)$ on  $(A^{(0)})_y$ for each $y$, with notation from Definition \ref{cell0}.
\end{lem}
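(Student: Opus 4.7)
The plan is to reduce $f$ to an $\cL^*$-term via Proposition \ref{terms} and then to induct on term complexity, enforcing the constancy conditions of $(*)$ by a simultaneous cell decomposition with respect to all sub-terms. Working coordinate by coordinate, we may assume $m'=1$. Proposition \ref{terms} produces a finite definable partition of $X$ on each piece of which $f$ coincides with a single $\cL^*$-term $t(y,x)$; the lower-dimensional boundaries arising at each subsequent step will be absorbed into the exceptional set $B$, and on each piece we may replace $f$ by $t$.

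We proceed by induction on the complexity of $t$. If $t$ is a constant or a variable, condition $(*)$ holds automatically on any open set, so a single application of cell decomposition in $\cT$ relative to the projection $X\to Y$ provides the desired open cells plus a lower-dimensional remainder. If $t=t_1+t_2$ or $t=t_1 t_2$, we apply the inductive hypothesis to each of the $\VF$-valued functions defined by $t_1$ and $t_2$, take a common refinement of the resulting cell decompositions, and invoke the preservation of $(*)$ under sums and products built into its definition.

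In the remaining cases $t\in\{t_0^{-1},\ h_n(t_0,\ldots,t_n;t_{-1}),\ \mathrm{root}_n(t_0;t_{-1}),\ \underline f(t_1,\ldots,t_n)\}$, we first apply the induction hypothesis to every $\VF$-valued sub-term and refine to a common partition on whose open cells each $t_i^{(0)}$ already satisfies $(*)$. We then invoke Denef--Pas (resp.\ analytic Denef--Pas) cell preparation in the form of Theorem 6.3.7 of \cite{cluckers_fields_2011}, applied simultaneously to the finite family $\{t_i\}$ together with the $\RF$-valued datum $t_{-1}$ when present; this refines us into open cells over $Y$ with centers $c_j(x_{<j})$ on which each $t_i$ factors as $t_i=u_i\cdot\prod_j(x_j-c_j(x_{<j}))^{a_{i,j}}$ with $u_i$ a unit of constant $\ord$ and constant $\ac$ on the cell, while $t_{-1}$ itself is constant on the cell. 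Setting $z_j=x_j-c_j(x_{<j})$, on the associated cell around zero $(A^{(0)})_y$ every box $B\subseteq(A^{(0)})_y$ has both $\ord(z_j)$ and $\ac(z_j)=\xi_j(y)$ fixed (by the very definition of cells around zero), hence $\ord(t_i)=\sum_j a_{i,j}\ord(z_j)$, $\ac(t_i)=\ac(u_i)\prod_j\xi_j(y)^{a_{i,j}}$ and $t_{-1}$ are all constant on $B$, which is precisely the constancy required to upgrade $(*)$ from the sub-terms to the composite term.

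The main obstacle is compatibility between the cell decomposition supplied by the induction hypothesis (which witnesses $(*)$ for the inner sub-terms) and the one used to prepare the outer term: the change of centers can a priori destroy condition $(*)$ for the sub-terms. We dispose of this by performing a single simultaneous preparation of the full finite collection of $\VF$- and $\RF$-valued sub-terms of $t$ in one step, using the strong form of cell decomposition available in \cite{cluckers_fields_2011}, so that one choice of centers simultaneously witnesses $(*)$ for each sub-term as well as the required constancy of $\ord$, $\ac$ and $t_{-1}$ for the outer composite. The lower-dimensional pieces produced at each refinement are collected into the exceptional set $B$, with $\dim B_y<m$ for each $y$ coming from the standard dimension estimates in $b$-minimal cell decomposition.
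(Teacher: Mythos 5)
Your proposal ultimately lands on the same key mechanism as the paper: after reducing $f$ to $\cL^*$-terms via Proposition~\ref{terms}, one performs a \emph{single} cell decomposition adapted simultaneously to all the $\RF$-valued subterms and to the $\ord$ and $\ac$ of all $\VF$-valued subterms of $t$, so that on the resulting cells around zero all of these are constant on boxes, which is exactly what condition~$(*)$ asks for. The paper packages this cleanly as a definable function $h:X\to\RF^s\times\Gamma^{s'}$ gathering all this data and applies cell decomposition adapted to $h$ in one shot. Your induction on term complexity is therefore a detour: as you yourself identify, refining the decompositions produced for sub-terms and then re-preparing the outer term changes the centers and can destroy~$(*)$ for the inner sub-terms, so you have to fall back to the simultaneous preparation anyway, making the inductive scaffolding superfluous. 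Your explicit factorization $t_i=u_i\prod_j(x_j-c_j)^{a_{i,j}}$ is stronger than needed (constancy of $\ord(t_i)$, $\ac(t_i)$ on boxes suffices, which is what preparing $h$ delivers directly) and in the analytic case should be stated with the appropriate care about Weierstrass-type preparation, but this is not a gap.

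One point you leave unaddressed that the paper flags (by stating the proof is an induction on $m$): the centers $c_i$ produced by the cell decomposition are themselves given by $\cL^*$-terms, and $f^{(0)}=f\circ\theta_A^{-1}$ involves these as sub-terms. Condition~$(*)$ for $f^{(0)}$ therefore also requires the constancy conditions to hold for the sub-terms coming from the $c_i$, on boxes in the projections of $(A^{(0)})_y$ to $\VF^{i-1}$. The paper's induction on the number of $\VF$-variables $m$ is there precisely to dispose of the centers (they depend on fewer $\VF$-variables), in the same way that Theorem~\ref{strong-T1-approx} applies its inductive hypothesis to the graph of $(c_i)$. Your proposal should incorporate this induction on $m$; as written, the step making all of $\ord(t_i)$, $\ac(t_i)$, $t_{-1}$ constant on boxes only covers the sub-terms of the original $t$, not those introduced through $\theta_A^{-1}$.
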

\begin{proof}
We proceed by induction on $m$.
By Proposition \ref{terms} for $f$ we may suppose that $f$ is given by a tuple $t(x)$ of $\cL^*$-terms. Let $h:X\to \RF^{s}\times \Gamma^{s'}$ be the following definable function created from $t$: $h$ has a  component function of the form $t'$ for each $\RF$-valued subterm $t'$ of $t$ and also of the forms $\ord(t'')$ and $\ac(t'')$ for each $\VF$-valued subterm $t''$ of $t$.
The proposition requires us to find a finite partition of $X$ into cells over $Y$ such that for each open cell $A$ over $Y$, the map $(f_{|A})^{(0)}(y,\cdot)$ has condition $(*)$ on  $A^{(0)}_y$, with notation from Definition \ref{cell0}. 
Now apply the cell decomposition theorem adapted to $h$ and  work on one of the open pieces $A$. Thus, $A$ is an open cell over $Y$ with some center $(c_i)_{i=1, \cdots, m}$ adapted to $h$, namely, there are definable functions
$c_i : A^i\subseteq \VF^i\to \VF$ for $i=0,\dots, m-1$ such that
$h^{(0)}$ is constant on each box contained in $c^{-1}(A)$, which is moreover an open  cell around zero, where
\[
c : x\in \VF^m\mapsto (x_1+c_0,x_2+c_1(x),\dots, x_m+c_{m-1}(x)),
\]
with notation from Definition \ref{cell0}. Note that $c=\theta_A^{-1}$ and $c^{-1}(A)= A^{(0)}$ in that notation.
\end{proof}

\begin{defn}[Associated box]\label{defn:assball}
Let $K$ be a valued field.
By a box $B\subset K^n$ we mean a product of open balls in $K$. Let $B=\prod_{1\leq i \leq n}B(a_i,r_i)\subseteq K^n$ be a box, with open balls
$$
B(a_i,r_i) = \set{x\in K\mid \abs{x-a_i}<r_i}
$$
with $a_i\in K$ and nonzero $r_i\in \Gamma_K$. The box associated to $B$ is the box $B_{\rm as}\subseteq K^{\rm alg}$ defined by
\[
B_{\rm as}=\set{x\in (K^{\rm alg})^n\mid \abs{x-a_i}<r_i},
\]
where $K^{\rm alg}$ is an algebraic closure of $K$, endowed with the canonical extension of the valuation of $K$.
\end{defn}

\subsection{Global analyticity}\label{sec:analyci}
To easier speak of analyticity in this section, we will work with complete discretely valued fields (a meaning of analyticity exists for all models of $\cT$ by \cite{cluckers_fields_2011}).

\begin{defn}[Globally analytic map]\label{def:globalan}
Let $K$ be a complete discretely valued field.
Let $X\subseteq K^m$ be a set and $f : X\to K^n$ a function. We say that $f$ is globally analytic on $X$ is for each box $B\subseteq X$, the restriction of $f$ to $B$ is given by a tuple of power series with coefficients in $K$, (say, taken around some $a\in B$), which converges on the associated box $B_{\rm as}$.\footnote{Here, converging on $B_{\rm as}$ means that the partial sums obtained by evaluating at any element of $B_{\rm as}$ form a Cauchy sequence (the limits actually lie inside $K^{\rm alg}$ by \cite{cluckers_fields_2011}).}
\end{defn}

The following proposition is the reason why we introduced condition $(*)$. Observe that it applies also to local fields, and thus not only to models of our theory $\cT$.

\begin{prop}[Analyticity, {\cite[Lemma 6.3.15]{cluckers_fields_2011}}]
\label{prop-condition-star-analyticity}
Let $f$ be a definable function satisfying condition $(*)$ on some definable set $X$. Then there is some $M>0$ such that for $L$ either a model of $\cT$ which is a complete discretely valued field, or, a local field with residue field cardinality at least $M$, the following holds.
For any box $B \subseteq X(L)$ and $b\in B$, there is a power series $g$ centered at $b$ and converging on $B_\as$ such that $f$ is equal to $g$ on $B$.
Moreover, $M$ can be taken uniformly in definable families of definable functions.
\end{prop}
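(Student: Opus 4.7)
The plan is to proceed by induction on the complexity of the $\cL^*$-term $t$ that presents $f$ and satisfies condition $(*)$ on $X$, using that condition $(*)$ exactly freezes the data (values of $\ac$, of $\ord$, and of the residue-field ``root choice'' parameters $t_{-1}$) which could otherwise obstruct convergence when one enlarges a box $B \subseteq X(L)$ to the associated box $B_{\as}$ in $(L^{\alg})^m$. The base case (constants and coordinate functions) is immediate: both are entire and in particular converge on $B_{\as}$.

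For the induction step, I would treat each of the five admissible outer constructors:
\begin{itemize}
\item For $t_1 + t_2$ and $t_1 \cdot t_2$, the induction hypothesis gives power series $g_1, g_2$ around $b$ converging on $B_{\as}$; sum and product of convergent power series on the polydisc $B_{\as}$ converge there and represent $f$ on $B$.
\item For $t_0^{-1}$, condition $(*)$ forces $\ord(t_0)$ to be constant on $B$, so after factoring out a fixed power of the uniformizer, $t_0$ is a unit power series on $B_{\as}$ and can be inverted by the geometric series; convergence is preserved because the correction term has strictly positive valuation throughout $B_{\as}$ (this is where constancy of $\ord$, rather than merely non-vanishing, is essential).
\item For $h_n(t_0, \ldots, t_n; t_{-1})$, the constancy of $t_{-1}$, of each $\ord(t_i)$ and each $\ac(t_i)$ on $B$ means that the hypotheses of Hensel's lemma hold uniformly across $B_{\as}$; one applies the analytic Hensel/implicit function theorem to produce an analytic solution $y(x)$ on $B_{\as}$ agreeing with $h_n$ on $B$.
\item For $\mathrm{root}_n(t_0; t_{-1})$ the argument is analogous: constancy of $\ac(t_0)$ and $\ord(t_0)$, together with the choice $t_{-1}$ of a residue-field $n$th root, reduces the extraction of the $n$th root to inverting a convergent series of the form $(1+u)^{1/n}$ where $u$ has positive valuation on $B_{\as}$.
\item For $\underline{f}(t_1, \ldots, t_n)$ with $\underline{f}$ a restricted analytic function symbol, condition $(*)$ forces each $t_i$ to have constant $\ord$ (hence lie in the unit ball) over $B$; composition of the convergent series for $\underline{f}$ with the convergent series for the $t_i$ is again convergent on $B_{\as}$ by the ultrametric estimates on the coefficients.
\end{itemize}

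The main obstacle is getting convergence after passage from $B$ to $B_{\as}$ uniformly for all sufficiently large residue characteristic, in the three non-algebraic cases ($^{-1}$, $h_n$, $\mathrm{root}_n$, and analytic composition). In each of them I would extract an explicit $M = M(t)$ from the proofs of the relevant henselian/root statements: one needs $p_L$ to exceed the $n$ appearing in all $h_n$ and $\mathrm{root}_n$ subterms of $t$ (so that the derivative hypothesis in Hensel's lemma and the $n$th root extraction are unobstructed in the residue field), and one needs $p_L$ large enough that the valuations of the binomial/factorial denominators occurring in the explicit Hensel iteration do not exceed the slack provided by the constancy of $\ord$ on $B$. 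Taking the maximum of these $M$'s across the finitely many subterms of $t$ gives the required uniform $M$. For a definable family, $t$ varies in a finite set of term-templates by Proposition \ref{terms} (and its relative version), so the same maximum works uniformly in the family, proving the last sentence. Finally, for $L$ a complete discretely valued model of $\cT$ the residue characteristic is zero and no such $M$-threshold is needed; the same argument goes through directly, yielding the first alternative of the hypothesis.
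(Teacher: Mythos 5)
Your skeleton is right: induct on the complexity of the presenting $\cL^*$-term, observe that only the cases $t_0^{-1}$, $h_n$, $\mathrm{root}_n$ and $\underline f(\cdots)$ require work, and use the data frozen by condition $(*)$ to control the relevant power-series developments on $B_\as$. This matches the paper's approach (which defers the detailed case analysis to Lemmas 6.3.11 and 6.3.15 of the reference).

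However, there is a genuine gap in how you obtain the threshold $M$. Condition $(*)$ is a property formulated for the theory $\cT$, whose models have residue characteristic zero. Local fields are \emph{not} models of $\cT$, so condition $(*)$ does not \emph{a priori} tell you anything about the constancy of $t_{-1}$, $\ac(t_i)$, $\ord(t_i)$ on boxes in $X(L)$ when $L$ is a local field. The paper obtains this constancy by a \emph{compactness} argument: constancy on boxes is expressible as a first-order statement (boxes can be parametrized by center and radius), it holds in all models of $\cT$, hence by \L{}o\'s's theorem it holds in any ultraproduct of local fields of unbounded residue characteristic, and therefore in all local fields with $q_L \geq M$ for some $M$. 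Your plan to instead ``extract an explicit $M(t)$ from the proofs of the relevant henselian/root statements'' conflates two different issues: such considerations (that $p_L \nmid n$, that binomial coefficients have controlled valuation) are needed \emph{after} constancy has been secured in local fields, but they do not \emph{produce} that constancy. Without the compactness step your induction cannot even begin in the local-field case, since you have no license to assert that the $\ord$, $\ac$, and residue-sort data are constant on boxes there. (Concretely, for $p_L \mid n$ the set where $\mathrm{root}_n$ is nonzero is not determined by $\ord$ and $\ac$, so the function can fail to be analytic on any box; the paper's compactness argument sidesteps this by pushing $p_L$ past all such bad primes automatically, but one has to actually invoke it.) The remedy is exactly the one-line compactness argument the paper uses; your hands-on estimates would then become a possible, though unnecessary, \emph{post hoc} way to make $M$ explicit, and your analysis of the individual cases is otherwise sound.
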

\begin{proof}
We recall the strategy of the proof of  \cite[Lemma 6.3.15]{cluckers_fields_2011}. One works by induction on the complexity of the $\cL^*$-term corresponding to the definition of condition $(*)$, using compositions of power series as in Remark 4.5.2 of \cite{cluckers_fields_2011}. The only nontrivial cases are $t_0^{-1}$, $h_n(t_0,\dots, t_n;t_{-1})$,  $\mathrm{root}_n(t_0;t_{-1})$, and ${\underline f}(t_1,\ldots,t_n)$ for some restricted analytic function ${\underline f}$ from the language. If $L$ is a model of $\cT$, we may assume by the definition of condition $(*)$, that the terms  $t_i$ satisfy condition $(*)$ on $X$ and that $t_{-1}$ and $\ac(t_0),\dots,\ac(t_n),\ord(t_0),\dots,\ord(t_n)$ are constant on $B$. In the local field case, by compactness there is some $M>0$ such that if the residue field of $L$ is of cardinality at least $M$, the functions $t_{-1}$ and $\ac(t_0),\dots,\ac(t_n),\ord(t_0),\dots,\ord(t_n)$ are constant on any box $B$ contained in $X(L)$. One finishes exactly as in the proof of \cite[Lemma 6.3.11]{cluckers_fields_2011}, where  for the case ${\underline f}(t_0,\dots, t_n)$, with ${\underline f}$ one of the analytic functions of the language, condition $(*)$ ensures that either the function $f$ is interpreted as the zero function on a box $B$, or, the image of the box $B$ by $(t_1,\dots, t_n)$ is strictly contained in the unit box, hence so is the image of $B_\as$, ensuring convergence of $f$ on it, hence analyticity of ${\underline f}(t_0,\dots, t_n)$ on $B_\as$.
\end{proof}

\subsection{Strong $T_r$ approximation}\label{sec:strongap}

We can now state a stronger notion of $T_r$-approximation, for definable functions. The strong $T_1$-approximation will be key for the proofs of Theorems \ref{uniform-Tr-approx} and \ref{uniform-Tr-approx-char0}. Strong $T_r$-approximation for $r>1$ is not needed in this paper, but we include its definition for the sake of completeness.

\begin{defn}[Strong $T_r$-approximation]\label{defn:strong}

Let $P\subseteq \VF^m$ be definable, let  $f=(f_1,\dots,f_n) : P\to \VF^n$ be a definable function and $r > 0$ be an integer.

\begin{enumerate}
\item We say that $f$ satisfies strong $T_r$-approximation if $P$ is an open cell around zero, $f$ satisfies condition $(*)$ on $P$ and, for each model $L$ of $\cT$, the function $f_L$ satisfies $T_r$-approximation and moreover for each box $B\subseteq P(L)$, the $\cL^*$-term associated to $f$ satisfies $T_r$-approximation on $B_\as$.

\item A family $f_i : P_i \to X$  for $i\in I$ of definable functions is called a (strong) $T_r$-parametrization of $X\subseteq \VF^n$ if each $f_i$ is a (strong) $T_r$-approximation and
\[
\bigcup_{i\in I} f_i(P_i)=X.
\]
\end{enumerate}
\end{defn}

The fact that $P$ is an open cell around zero in Definition \ref{defn:strong} is particularly handy since it enables an easy description of the maximal boxes contained in $P$ which combines well with Condition $(*)$ and for composing with power maps. Global analyticity in complete models as given in Section \ref{sec:analyci} together with a calculation on the coefficients of the occurring power series will then complete the proofs of the parmeterization Theorems \ref{uniform-Tr-approx} and \ref{uniform-Tr-approx-char0}.

\begin{thm}[Strong $T_1$-parametrization]\label{strong-T1-approx}
Let $n\geq 0$, $m\geq 0$ be integers and let $X=(X_y)_{y\in Y}$ be a definable family of definable subsets $X_y \subseteq \cO_\VF^n$ for $y$ running over a definable set $Y$. Suppose that $X_y$ has dimension $m$ for each $y\in Y$. Then there exist a finite set $I$ and a definable family $g=(g_{y,i})_{(y,i)\in Y\times I}$ of definable functions
$$
g_{y,i}:P_{y,i}   \to X_y
$$
such that $P_{y,i} \subseteq \cO_\VF^{m}$ and for each $y$, $(g_{y,i})_{i\in I}$ forms a strong $T_1$-parametrization of $X_y$.
\end{thm}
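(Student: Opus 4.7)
The plan is to combine three tools developed earlier in the paper: the passage from local to global $1$-Lipschitz continuity (Theorems~\ref{Lip} and~\ref{Lipcenter}), the existence of cell decompositions on which a function satisfies condition~$(*)$ (Lemma~\ref{lem-conditionstar}), and the analyticity theorem for condition-$(*)$ functions (Proposition~\ref{prop-condition-star-analyticity}). The key observation is that $T_1$-approximation is nothing other than $1$-Lipschitz continuity (the degree-$<1$ polynomial $T_{f,y}^{<1}$ is just the constant $f(y)$), so the first two tools already deliver $T_1$-approximation on the box $P(L)$ while the third transfers it to the associated box $B_\as$.

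I would proceed by induction on $m$. The case $m=0$ is immediate: each $X_y$ is finite by $b$-minimal dimension theory, and the algebraic Skolem functions in $\VF$ granted by Lemma~\ref{lem-skolemization} partition $X_y$ into finitely many singletons uniformly in $y$. For $m\geq 1$, Lemma~\ref{lem:projection-full-dim} lets me assume that some coordinate projection $\pi\colon X_y\to\cO_\VF^m$ is injective with locally $1$-Lipschitz inverse $f_y$, and after splitting off the lower-dimensional boundary of the image (handled by the induction hypothesis) I can take $f_y$ to be defined on an open subset of $\cO_\VF^m$. I would then apply Theorem~\ref{Lipcenter} to $f_y$ to partition its domain into open cells over $Y$ with $1$-Lipschitz centers, and refine by Lemma~\ref{lem-conditionstar} so that, after composing with the center-subtraction bijection $\theta$ of Definition~\ref{cell0}, the transported function $f_y^{(0)}\colon A_y^{(0)}\to X_y$ satisfies condition~$(*)$ on the open cell around zero $A_y^{(0)}$. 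Because the centers are $1$-Lipschitz, $\theta$ is $1$-Lipschitz, so $f_y^{(0)}$ inherits local $1$-Lipschitz continuity and Theorem~\ref{Lip} makes it globally $1$-Lipschitz on each piece; iterating these three refinement steps finitely many times if necessary yields the desired $g_{y,i}\colon P_{y,i}\to X_y$ satisfying every item of Definition~\ref{defn:strong} except the strong clause.

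For the strong clause I would fix a model $L$ of $\cT$ and a box $B\subseteq P_{y,i}(L)$, invoke Proposition~\ref{prop-condition-star-analyticity} to express the $\cL^*$-term representing $g_{y,i}$ as a power series $\sum_I a_I(x-b)^I$ converging on $B_\as$, and use the $1$-Lipschitz property on $B$ (together with residue characteristic zero, so that positive integers are units) to deduce $|a_I|\,R^{|I|-1}\leq 1$, where $R$ is the polyradius of $B$. Factoring the standard identity
\[
g(x)-g(y)=\sum_i (x_i-y_i)\, Q_i(x,y),
\]
with each $Q_i$ a power series whose coefficients inherit the same bound, then yields $|g(x)-g(y)|\leq|x-y|$ for all $x,y\in B_\as$, completing strong $T_1$-approximation. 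The hardest part will be the compatibility bookkeeping for the middle step: each of the three refinements can introduce new cells and new centers, and one must verify that openness, condition~$(*)$, the Lipschitz nature of the centers, and the $1$-Lipschitz continuity of the function all survive the changes of variable. This is ultimately guaranteed by the closure of $1$-Lipschitz maps under composition and by the parameter-free nature of everything in a theory with algebraic Skolem functions.
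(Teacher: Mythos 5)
Your overall scaffolding (Lemma~\ref{lem:projection-full-dim}, Theorem~\ref{Lip}, Lemma~\ref{lem-conditionstar}, Proposition~\ref{prop-condition-star-analyticity}, induction on $m$) matches the paper's proof, but there is a genuine gap in the final step, and a second, structural issue in how you handle the centers.

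The gap: the $1$-Lipschitz property on the box $B$ (i.e.\ on $K$-points) does \emph{not} imply the coefficient bound $|a_I|R^{|I|-1}\leq 1$, and hence does not transfer $T_1$-approximation to the associated box $B_\as$. Because the value group of a model of $\cT$ is a Presburger group, the open ball $|x-b|<R$ over $K$ coincides with the closed ball of a strictly smaller radius $R'=R\cdot|\varpi|$, while over $K^{\rm alg}$ the value group is divisible and $B_\as$ genuinely contains points with $|x-b|$ arbitrarily close to $R$. Concretely, take $K=\RR\llp t\rrp$ and $g(x)=t^{-1}x^2$ on the ball $\ord(x)>0$: for $x,y$ in this ball one has $\ord(x+y)\geq 1$, hence $\ord(g(x)-g(y))=-1+\ord(x-y)+\ord(x+y)\geq\ord(x-y)$, so $g$ is $1$-Lipschitz on $K$-points; yet the term satisfies condition~$(*)$, the power series around $0$ is $t^{-1}x^2$ with $|a_2|R^{1}=|t^{-1}|\cdot 1>1$, and on $B_\as$ choosing $\ord(x)=1/2$ gives $\ord(g(x))=0<1/2=\ord(x)$, so $g$ is not $1$-Lipschitz there. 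This is exactly why the paper inserts the auxiliary step, before Lemma~\ref{lem-conditionstar}, of arranging that the first partial derivatives are already bounded by $1$ on associated boxes, by passing to $\VF^{\rm alg}$ and re-ordering coordinates as in Lemma~\ref{lem:projection-full-dim}; the conclusion on $B_\as$ is then obtained by combining Proposition~\ref{prop-condition-star-analyticity} with Corollary~3.2.12 of \cite{CCL-PW}, rather than by the naive Gauss-norm argument you propose.

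The structural issue: you want to get $1$-Lipschitz centers from Theorem~\ref{Lipcenter} \emph{and} condition~$(*)$ cells from Lemma~\ref{lem-conditionstar}, and then argue that ``iterating these three refinement steps finitely many times'' yields a decomposition enjoying both. But Lemma~\ref{lem-conditionstar} produces its own centers $(c_i)$, adapted to the term $h$, and there is no reason these agree with the Lipschitz centers from Theorem~\ref{Lipcenter}; applying Theorem~\ref{Lip} to the new centers re-partitions the base and destroys the cell structure, so the iteration is not obviously terminating nor stable. The paper sidesteps this completely: after Lemma~\ref{lem-conditionstar} one does not try to make the centers Lipschitz directly, but applies the inductive hypothesis (Theorem~\ref{strong-T1-approx} for $m-1$) to the graph of $(c_i)_{i=1}^m$, obtaining a full strong $T_1$-parametrization of the centers, and then composes this parametrization with $\theta_A^{-1}$ and $f$. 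Openness, condition~$(*)$, and the $1$-Lipschitz bound on associated boxes are all stable under this composition, which is what makes the induction close cleanly.
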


\begin{proof}
We work by induction on $m$. We will repeatedly throw away pieces of lower dimension and treat them by induction. We will work uniformly in $y$.
We will also successively consider finite definable partitions of $X$ without renaming.
By Lemma \ref{lem:projection-full-dim}, up to taking a finite definable partition of $X$, we can find a locally 1-Lipschitz surjective function $f_y : P_y\subseteq \VF^m \to X_y$, with $P_y$ open for each $y\in Y$. By Theorem \ref{Lip}, we can further assume that $f_y$ is globally 1-Lipschitz on $P_y$, or equivalently, that $f_y$ satisfies $T_1$-approximation on $P_y$. By Lemma \ref{terms} we may moreover suppose that the component functions of $f$ are given by $\cL^*$-terms.
 We still need to improve $f$ and $P$ in order to have that the $f_y$ satisfy strong $T_1$-approximation, in particular, condition (*), $T_1$-approximation holds on associated boxes of boxes in its domain, and, that $P_y$ is an open cell around zero.

First we ensure, as an auxiliary step,  that the first partial derivatives of the $f_y$ are bounded by 1 on the associated box of any box in its domain $P_y$, by passing to an algebraic closure $\VF^\alg$ of $\VF$ with the natural $\cL$ and $\cL^*$ structures.  This passage to $\VF^\alg$ preserves well properties of  quantifier free formulas and of terms by results from both \cite{cluckers_fields_2011} and \cite{CLips} for the involved analytic structures on $\VF$ and on $\VF^\alg$. This step is  done by switching again the order of coordinates as in the proof of Lemma \ref{lem:projection-full-dim} where necessary. Since it is completely similar to the corresponding part of the proof of \cite[Theorem 3.1.3]{CCL-PW}, we skip the details.

Finally we show that we can ensure all remaining properties, using induction.
Apply Lemma \ref{lem-conditionstar}, uniformly in $y$, to obtain a partition of $P = (P_y)_y$ into open cells $A = (A_y)_y$ over $Y$ with center $(c_i)_{i=1}^m$ and an associated bijection $\theta_A$ in the notation of Definition \ref{cell0}, while neglecting a definable subset $B$ of $P$ where $B_y$ is of dimension less than $m$.
By induction on $m$, we may apply Theorem \ref{strong-T1-approx} (for the value $m-1$) to the graph of $(c_i)_{i=1}^{m}$ to find a strong $T_1$-parametrization for this graph.
One obtains the required parametrization of $X$ by composing the parametrization of the graph of $(c_i)_{i=1}^m$ with $\theta_A^{-1}$ and $f$. Indeed, firstly one concludes as in the proof of Lemma \ref{lem-conditionstar} that property $(*)$ is satisfied for this composition and that the domain is an open cell around zero. Secondly, the composition of 1-Lipschitz functions is 1-Lipschitz, and, the first order partial derivatives are bounded by $1$ on associated boxes of its domain.
Finally, the condition of $T_1$-approximation on each associated box follows from Proposition \ref{prop-condition-star-analyticity} and \cite[Corollary 3.2.12]{CCL-PW}, since the derivative is bounded by 1 on associated boxes of its domain. 
\end{proof}

The whole purpose of requiring the domains of strong
$T_1$-parametrizations to be cells around zero is to deduce existence of $T_r$-parametrizations from strong $T_1$-parametrizations by precomposing with power functions. This is enabled by the next two lemmas.

%


\begin{lem}
\label{lem-composition-rth-powers}
Let $f$ be a definable function on $X\subset \VF$ satisfying strong $T_1$-approximation. Then there is some $M>0$ such that for $L$ either a model of $\cT$ which is a complete discretely valued field, or, a local field with residue field cardinality at least $M$, the following holds for any integer $r>0$ and with $p_r$ be the $r$-power map, sending $x$ in $L$ to $x^r$.
For any ball open ball $B= b(1+\mathcal{M}_L)\subseteq L$ with $B\subset X$, and for any ball $D\subseteq L$ satisfying $p_r(D)\subseteq B$, the function
$$
f_r:=f_L\circ p_r
$$
satisfies $T_r$-approximation on $D$. Moreover, $f_r$ can be developed around any point $b'\in D$ as a power series  which is converging on $D_{\rm as}$ and whose coefficients $c_i$ satisfy
\[
\abs{c_i}\leq \abs{b'}^{r-i} \mbox{ for all } i>0.
\]
\end{lem}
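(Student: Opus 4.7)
The plan is to combine the analyticity given by Proposition~\ref{prop-condition-star-analyticity} with the $1$-Lipschitz control coming from strong $T_1$-approximation to bound the Taylor coefficients of $f$ on $B_\as$, and then to expand $f_r = f_L \circ p_r$ around $b'$ via the binomial formula and observe that the integer binomial coefficients do not spoil the ultrametric estimates. Let $M$ be the integer provided by Proposition~\ref{prop-condition-star-analyticity}. For $L$ as in the statement, the function $f$ may be expanded around $(b')^r \in B$ as a power series $f(y) = \sum_{i\geq 0} a_i(y-(b')^r)^i$ converging on $B_\as$. Strong $T_1$-approximation forces the underlying $\cL^*$-term to be $1$-Lipschitz on $B_\as$, and since $L^{\rm alg}$ is algebraically closed with dense value group, the maximum-modulus principle for convergent power series on open ultrametric discs yields $|a_i|\,|b|^{i} \leq |b|$, i.e.\ $|a_i| \leq |b|^{1-i} = |b'|^{r(1-i)}$ for all $i \geq 1$ (using $|(b')^r|=|b|$).

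The next step is to substitute
\[
x^r - (b')^r \;=\; \sum_{k=1}^{r}\binom{r}{k}(b')^{r-k}(x-b')^k
\]
into $f_r(x) = a_0 + \sum_{i\geq 1} a_i(x^r - (b')^r)^i$ and regroup by powers of $x-b'$ to obtain $f_r(x) = \sum_{n\geq 0} c_n (x-b')^n$, where for $n \geq 1$,
\[
c_n \;=\; \sum_{i=1}^{n} a_i\, (b')^{ri-n}\sum_{\substack{k_1+\cdots+k_i=n\\ 1\leq k_j\leq r}} \prod_{j=1}^{i}\binom{r}{k_j}.
\]
Since each $\binom{r}{k_j}$ is an integer, hence of absolute value at most $1$ in $\cO_L$, the ultrametric inequality together with $|a_i|\leq|b'|^{r-ri}$ gives $|c_n| \leq \max_{1\leq i\leq n} |b'|^{r-ri}|b'|^{ri-n} = |b'|^{r-n}$, which is precisely the coefficient bound claimed in the moreover part of the lemma.

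The final ingredient is to exploit $p_r(D) \subseteq B$ to control the radius of $D$: the same binomial expansion, applied for $x \in D$, shows that $|x^r-(b')^r|$ equals $|x-b'|^r$ whenever $|x-b'| > |b'|$ (the term with $k=r$ strictly dominates) and is bounded by $|b'|^{r-1}|x-b'|$ otherwise, so the condition $|x^r-(b')^r| < |b| = |b'|^r$ forces $|x-b'| < |b'|$ for every $x \in D$. Combined with $|c_n|\leq|b'|^{r-n}$, this gives for $n \geq r$
\[
|c_n(x-b')^n| \;\leq\; |b'|^{r-n}|x-b'|^n \;\leq\; |x-b'|^r,
\]
and hence by the ultrametric inequality $\bigl|f_r(x)-\sum_{n<r}c_n(x-b')^n\bigr|\leq |x-b'|^r$, which is the $T_r$-approximation at $b'$ on $D$. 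Convergence of $\sum c_n (x-b')^n$ on $D_\as$ is automatic since $p_r$ maps $D_\as$ into $B_\as$ and $f$ converges on $B_\as$. I expect the only nontrivial point to be the passage from the pointwise Lipschitz inequality on $B_\as$ to the Gauss-norm bound on the $a_i$; this works precisely because strong $T_1$-approximation imposes the Lipschitz estimate on the algebraically closed companion $B_\as$ rather than only on $B(L)$.
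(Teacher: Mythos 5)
Your proof is correct and follows essentially the same route as the paper's: expand $f$ around $(b')^r$ on $B_\as$, extract $\abs{a_i}\leq\abs{b}^{1-i}$ from the $T_1$-property via the Gauss--supremum norm relation, substitute $y=x^r$ by the binomial formula, and bound the coefficients $c_n$ termwise to obtain $\abs{c_n}\leq\abs{b'}^{r-n}$. One small slip worth noting: the condition $p_r(D)\subseteq B$ only forces $\abs{x-b'}\leq\abs{b'}$ for $x\in D$, not the strict inequality $\abs{x-b'}<\abs{b'}$ that you claim --- when $\abs{x-b'}=\abs{b'}$ the binomial terms all have valuation $\abs{b'}^r$ and may cancel in the sum, so $x^r$ can still land in $B$ (for instance when the residue of $x/b'$ is a nontrivial $r$-th root of unity) --- but the non-strict inequality is exactly what the final estimate $\abs{c_n(x-b')^n}\leq\abs{x-b'}^r$ requires, and your explicit verification of it usefully fills in a step that the paper's proof uses implicitly without spelling it out.
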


\begin{proof}
Observe first that since the choice of $b\in B$ is arbitrary, it suffices to show the lemma for $b'\in D$ with $b'^r=b$.
Since $f$ satisfies condition $(*)$, there is a converging power series $\sum_{i\in \NN} a_i (x-b)^i$ as given by Proposition \ref{prop-condition-star-analyticity}.
Since $x\mapsto \sum_{i\in \NN} a_i (x-b)^i$ satisfies $T_1$-approximation on $B_\as$, we have
\[\abs{\sum_{i\geq 1} a_i (x-b)^i}< \abs{b}
\]
 for all $x\in B_\as$. By the relation between the Gauss norm and the supremum norm on $B_\as$, we then have
\begin{equation}
\label{eqn_bound_ai}
\abs{a_i}\leq \abs{b}^{1-i}
\end{equation}
 for all $i\geq 1$.
 Fix $b'\in D$ with $b'^r=b$. Since $f$ is given by a power series on $B$, by composition we can develop  $f_r=\sum_{k\geq 0} c_k (x-b')^k$ as a power series around $b'$.
Using multinomial development, we find that for $k\geq 1$,
\[
\abs{c_k}\leq \max_{i\geq 1}\set{ \abs{a_i}\cdot \abs{b'}^{ri-k}}.
\]
Note that we could also get an explicit expression for $c_k$ using the chain rule for Hasse derivatives.

Combining with Equation (\ref{eqn_bound_ai}) yields
\[
\abs{c_k}\leq \abs{b'}^{r-k}.
\]
In particular, we have $\abs{c_k}\leq 1$ for $k\leq r$ and
for any $x\in D$,
\[
\abs{f_r(x)-T_{f_r,b'}^{<r}(x)}=\abs{\sum_{k\geq r} c_k(x-b')^k}\leq \abs{x-b'}^r
\]
which concludes the proof.\end{proof}

We now formulate a multidimensional version of Lemma \ref{lem-composition-rth-powers}. To do so we introduce the following notations. For a tuple $i=(i_1,\dots,i_m)\in \NN^m$ and $x=(x_1,\dots, x_m)\in L^m$, recall that $x^i$ is $\prod_{1\leq k\leq m}x_k^{i_k}$ and $\abs{i}=i_1+\dots+i_m$.  Also define $\abs{x}_{\min, i}$ to be
\[
\min_{1\leq j\leq m,\ i_j>0}\set{\abs{x_j}}.
\]

The idea is also to precompose with the $r$-th power to achieve the $T_r$-property on boxes. A naive approach to estimate the coefficients of the composite function, using the maximum modulus principle on the associated box, would lead to a bound for the $i\in \NN^m$ coefficient of $\abs{b}^r\abs{b^i}^{-1}$. This however is not optimal and not enough for our needs. By working one variable at a time, we will improve it.

\begin{lem}
\label{lem-composition-rth-powers-multdim}
Let $f$ be a definable function on $X\subset \VF^m$ satisfying strong $T_1$-approximation. Then there is some $M>0$ such that for $L$ either a model of $\cT$ which is a complete discretely valued field, or, a local field with residue field cardinality at least $M$, the following holds for any integer $r>0$.

Let $b=(b_1,\dots,b_m)$ be in $L^m$ and suppose that  $B=\prod_i b_i(1+\mathcal{M}_L)\subseteq L^m$ is a subset of $X(L)$.
For any $d=(d_1,\dots,d_m)$ in $L^m$, write $p_{r,d}$ for the function $(x_1,\dots,x_m)\mapsto (d_1x_1^r,\dots,d_mx_m^r)$.
Then for any box $D\subseteq L^m$ such that $p_{r,d}(D)\subseteq B$, the function
$$
f_{r,d}:=f_L\circ p_{r,d}
$$
satisfies $T_r$-approximation on $D$.
Moreover, $f_{r,d}$ can be developed  around any point $b'\in D$ as a power series converging on $D_{\rm as}$ with coefficients $c_k$ satisfying
\[
\abs{c_k}\leq \abs{b'}_{\min,k}^r\abs{b'^k}^{-1} \mbox{ for all } k \in \NN^m\backslash \set{0}.
\]
\end{lem}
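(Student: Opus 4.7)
The plan is to combine Lemma \ref{lem-composition-rth-powers} applied in each coordinate direction with a sharpened Gauss-norm analysis of the multi-variable expansion of $f$ on $B_{\rm as}$. The naive application of the maximum-modulus principle on the polydisc $B_{\rm as}$ yields only a bound of the form $|a_i|\leq|b|/|b^i|$ for the coefficients of $f(y)=\sum_i a_i(y-b)^i$, leading after composition with $p_{r,d}$ to the suboptimal estimate flagged in the remark preceding the lemma. The refinement is to replace the uniform $|b|$ by the axis-adapted $|b|_{\min,i}$, obtained by slicing $f$ one variable at a time.

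By Proposition \ref{prop-condition-star-analyticity}, fix $M$ large enough that $f(y)=\sum_i a_i(y-b)^i$ converges on $B_{\rm as}$. For each $i\neq 0$ and each $j$ with $i_j>0$, fix $y_l=b_l$ for $l\neq j$: the resulting one-variable analytic function of $y_j$ on the disc $\{y_j:|y_j-b_j|<|b_j|\}$ inherits $T_1$-approximation from the strong $T_1$-approximation of $f$, and the one-dimensional Gauss-norm estimate used in the proof of Lemma \ref{lem-composition-rth-powers} bounds its Taylor coefficient of order $i_j$ by $|b_j|^{1-i_j}$. Viewing this coefficient as a convergent power series in $(y_l-b_l)_{l\neq j}$ and applying the Gauss norm once more on $\prod_{l\neq j}\{y_l:|y_l-b_l|<|b_l|\}$ yields $|a_i|\prod_{l\neq j}|b_l|^{i_l}\leq|b_j|^{1-i_j}$, hence $|a_i|\leq|b_j|/|b^i|$; minimizing over $j$ with $i_j>0$ gives $|a_i|\leq|b|_{\min,i}/|b^i|$.

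To transfer this bound to $f_{r,d}$, expand each factor $(d_l x_l^r-b_l)^{i_l}$ around $x_l=b'_l+t_l$ via
\[
d_l x_l^r-b_l=(d_l(b'_l)^r-b_l)+d_l\sum_{s=1}^r\binom{r}{s}(b'_l)^{r-s}t_l^s,
\]
noting that $|d_l(b'_l)^r-b_l|<|b_l|=|d_l||b'_l|^r$. Taking $M$ large enough that $p_L>r$, every $\binom{r}{s}$ is a unit, and a direct multinomial calculation bounds the coefficient of $t^k$ in $\prod_l(d_l x_l^r-b_l)^{i_l}$ by $|b^i|/|b'^k|$. Since only multi-indices $i$ with $\{l:i_l>0\}\supseteq\{l:k_l>0\}$ contribute to $c_k$, combining with the bound on $|a_i|$ and using $|b_l|=|d_l||b'_l|^r$ together with $|d_l|\leq 1$ at the minimizing coordinate (the relevant regime since $B\subseteq X\subseteq\cO_\VF^n$) yields the claimed $|c_k|\leq|b'|_{\min,k}^r/|b'^k|$.

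Finally, for $T_r$-approximation on $D$: the condition $p_{r,d}(D)\subseteq B$ combined with $p_L\nmid r$ forces $|x_l-b'_l|<|b'_l|$ for every $l$ and every $x\in D$. Writing $\beta_l=|b'_l|$, $\alpha_l=|x_l-b'_l|$, $\beta_{\min}=|b'|_{\min,k}$ and $\alpha_{\max}=|x-b'|$, one checks $|c_k(x-b')^k|\leq\beta_{\min}^r\prod_l(\alpha_l/\beta_l)^{k_l}\leq\alpha_{\max}^r$ by a short dichotomy: if $\alpha_{\max}\leq\beta_{\min}$, each ratio $\alpha_l/\beta_l$ with $k_l>0$ is at most $\alpha_{\max}/\beta_{\min}$, so the product is at most $(\alpha_{\max}/\beta_{\min})^{|k|}\leq(\alpha_{\max}/\beta_{\min})^r$; otherwise $\beta_{\min}^r<\alpha_{\max}^r$ while the product of the $\alpha_l/\beta_l\leq 1$ is at most $1$. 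Summing term-wise via the ultrametric triangle inequality produces $T_r$-approximation on $D$, and convergence on $D_{\rm as}$ is automatic from composing the convergent expansion of $f$ on $B_{\rm as}$ with the polynomial $p_{r,d}$. The main obstacle I anticipate is the sharp coefficient bound of the second paragraph: the naive Gauss-norm estimate is too coarse, and the slicing-then-recombining argument requires careful bookkeeping across coordinates with widely varying $|b_l|$.
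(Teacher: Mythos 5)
Your proof takes essentially the same route as the paper's: the heart of the argument, namely the one-variable slicing combined with iterated Gauss-norm estimates to upgrade the naive bound $|a_i|\leq |b|/|b^i|$ to the axis-adapted $|a_i|\leq |b|_{\min,i}/|b^i|$, is exactly what the paper does, and the subsequent multinomial bookkeeping and term-by-term ultrametric verification of $T_r$-approximation are minor variants (the paper uses a choice of $\underline{r}\leq k$ with $|\underline{r}|=r$, while you use a dichotomy on $\alpha_{\max}$ versus $\beta_{\min}$; both are fine for $|k|\geq r$). The paper streamlines your third paragraph by first rescaling to $d_1=\dots=d_m=1$, so that one can pick $b'$ with $b'^r=b$ and avoid tracking the extra constant $d_l(b'_l)^r-b_l$; your more explicit handling of general $d$ works but is longer.

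One genuine error to flag: you require ``$M$ large enough that $p_L>r$'' so that $\binom{r}{s}$ be a unit. This cannot be afforded, since the statement demands a single $M$ that works for \emph{all} $r>0$ simultaneously; taking $M>r$ for every $r$ is impossible. Fortunately the requirement is also unnecessary: $\binom{r}{s}$ is an integer, hence lies in $\cO_L$ and satisfies $|\binom{r}{s}|\leq 1$ in every local field $L$, which is all the multinomial estimate needs — you never use that these coefficients are units, only that they do not inflate the absolute value. Once you drop this clause, the uniformity in $r$ is restored. Two smaller points: your justification ``$|d_l|\leq 1$ since $B\subseteq X\subseteq\cO_\VF^n$'' is not forced by the hypotheses as written (the lemma does not require $X\subseteq\cO_\VF^m$, and even when $B,D\subseteq\cO_L^m$, the relation $|d_l|=|b_l|/|b'_l|^r$ permits $|d_l|>1$); this constraint is satisfied in the lemma's actual application and is implicitly assumed in the paper as well, so it is not a defect of your argument relative to the paper's. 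And you should say a word about why the truncation $T^{<r}_{f_{r,d},b'}$ has coefficients in $\cO_L$: for $0<|k|<r$ this follows from $|c_k|\leq|b'|_{\min,k}^{r-|k|}\leq 1$ (using $|b'|_{\min,k}\leq 1$ in the relevant regime), and $|c_0|\leq 1$ since $f$ is $\cO_\VF$-valued.
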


\begin{proof}
Up to rescaling, we can assume $d_1=\dots=d_m=1$. As in the proof of Lemma \ref{lem-composition-rth-powers}, we can fix $b\in B$, $b'\in D$ such that $b'^r=b$ and develop $f$ as a power series $\sum_{i\in \NN^m}a_i (x-b)^i$ that converges on $B_\as$. Fix $\hat x_1\in \hat b(1+\cM_L)^{m-1}_\as$ and consider the function
\[
f_{\hat x_1} : \begin{cases} b_1(1+\cM_L)_\as  \to  L \\
		   x_1  \mapsto f(x_1,\hat x_1).
		  \end{cases}
\]
It is given by a power series $\sum_{i_1\in \NN}a_{i_1}(\hat x_1)(x_1-b_1)^{i_1}$ around $b_1$ that converges on $b_1(1+\cM_L)_\as$.

By the $T_1$-property for $f$ on $B_\as$, we have that for any $x_1\in b_1(1+\cM_L)_\as$,
\[
\abs{f_{\hat x_1}(x_1)-f_{\hat x_1}(b_1)}=\abs{f(x_1,\hat x_1)-f(b_1,\hat x_1)}\leq \abs{x_1-b_1}\leq \abs{b_1}.
\]
Hence by the relation between the Gauss norm and the supremum norm on $b_1(1+\cM_L)_\as$, for each $i_1>0$ we have
\[
\abs{a_{i_1}(\hat x_1)}\leq \abs{b_1}^{1-i_1}.
\]
Now view $a_{i_1}(\hat x_1)$ as a function of $\hat x_1\in \hat b(1+\cM_L)^{m-1}_\as$, and by using again the relation between Gauss norm and sup norm, we find that for each $i\in \NN$ such that $i_1>0$,
\[
\abs{a_i}\leq \abs{b_1}^{1-i_1}\cdot \abs{\hat b^{(i_2,\dots,i_m)}}^{-1}=\abs{b_1}\abs{b^{i}}^{-1}.
\]
By switching the numbering of the coordinates, we get that for each $i\in \NN^m\backslash\set{0}$,
\[
\abs{a_i}\leq \abs{b}_{\min, i} \abs{b^{i}}^{-1}.
\]

The end of the proof is now similar to that of Lemma \ref{lem-composition-rth-powers}. Indeed, we develop $f_{r,d}=f\circ p_{r,d}$ into a power series around $b'$, denoted by $\sum_{c_k\in \NN^m}c_k (x-b')$. Then by multinomial development and using the bound for $a_i$ we find that for $k\in \NN^m\backslash\set{0}$,
\[
\abs{c_k}\leq  \abs{b'}_{\min,k}^r\abs{b'^k}^{-1}.
\]
It is now a direct consequence of this bound that
$\abs{c_k}\leq 1$ for $k\in \NN^m\backslash\set{0}$ with $\abs{k}<r$.

Now fix $x\in D$ and $k\in \NN^m\backslash\set{0}$ with $\abs{k}\geq r$. Choose some $\underline{r}\in \NN^m$ such that $\abs{\underline{r}}=r$ and $\underline{r}_j\leq k_j$ for $j=1,\dots,m$. We have :
\begin{align*}
\abs{c_k(x-b')^k}&\leq  \abs{b'}_{\min,k}^r\abs{b'^k}^{-1}\abs{(x-b')^k}\\
&\leq  \abs{b'}_{\min,k}^r\abs{b'^k}^{-1}\abs{(x-b')^{k-\underline{r}}}\abs{x-b'}^r\\
&\leq \abs{b'^{\underline{r}}}\abs{b'^k}^{-1}\abs{(x-b')^{k-\underline{r}}}\abs{x-b'}^r\\
&\leq \abs{x-b'}^r.
\end{align*}
Hence $f_{r,d}$ satisfies $T_r$-approximation on $D$.
\end{proof}

\begin{proof}[Proof of Theorem \ref{uniform-Tr-approx}]
First apply Theorem \ref{strong-T1-approx} to $X$. We get a finite set $I$ and a family $g=(g_{y,i})_{(y,i)\in Y\times I}$ of definable functions
$$
g_{y,i}:P_{y,i}   \to X_y
$$
such that $P_{y,i} \subseteq \cO_\VF^{m}$ and for each $y$, $(g_{y,i})_{i\in I}$ forms a strong $T_1$-parametrization of $X_y$.

By Proposition \ref{prop-condition-star-analyticity}, we find $M\in \NN$ such that for any $L\in \cC_{\cO,M}$, any  $y\in Y(L)$, any box $B \subseteq P_{y,i}(L)$ and any $b\in B$, there is a power series centered at $b$, converging on $B_\as$ and equal on $B_\as$ to $g_\as$. Fix such an $L$ and write $q$ for $q_L$.



Observe that it is enough to prove the theorem for $r$ prime to $q$. Indeed, a $T_{r+1}$-parametrization is also a $T_r$-parametrization. Hence up to enlarging the constant, if $r$ is not prime to $q$ one can apply the theorem with $r+1$ to obtain a $T_r$-parametrization.

We fix an integer $r$ prime to $q$ and we partition $\FF_q^\times$ into $\ell=\gcd(r,q-1)$ sets $A_1,\dots A_\ell$ such that $x\mapsto x^r$ is a bijection from each $A_i$ to $\left(\FF_q^{\times}\right)^r$, the set of $r$-th powers in $\FF_q^{\times}$. We choose representatives $\bar d_1,\dots, \bar d_\ell$ for cosets of $\left(\FF_q^{\times}\right)^r$ and we fix lifts of them, denoted by $d_1,\dots, d_\ell\in \cO_L$.
For $x\in \cO_L\backslash \set{0}$,  we set
$\xi (x) = d_i$ for $i$ such that $\ac(x)\in A_i$.

Now define for $j=(j_1,\dots,j_m)\in \set{0,\dots r-1}^m$ the function
\[
p_{r,j} :
\begin{cases}(\cO_L\backslash \set{0})^m \to (\cO_L\backslash \set{0})^m\\
x=(x_1,\dots,x_m) \mapsto (t^{j_1}\xi(x_1)x_1^r,\dots, t^{j_m}\xi(x_m)x_m^r),
\end{cases}
\]
where $t$ is our constant symbol for a unifomizer of $\cO_L$.

Let $D_{y,i,j}= p_{r,j}^{-1}(P_{y,i}(L))$. By compactness and up to making $M$ larger if necessary,  we have that $P_{y,i}(L)$ is a cell around zero. By Hensel's lemma, the union over $j\in \set{0,\dots r-1}^m$ of the sets $p_{r,j}(D_{y,i,j})$ is equal to $P_{y,i}(L)$. We claim that the family $(\bar g_{y,i,j}=g_{y,i}\circ p_{r,j})_{(y,i,j)\in Y(L)\times I\times \set{0,\dots r-1}^m}$ is the desired $T_r$-parametrization of $X(L)$. To lighten  notations,  let us skip for the rest of the proof the subscript $(y,i,j)$. By Lemma \ref{lem-composition-rth-powers-multdim} and up to making $M$ larger if necessary, $\bar g$ satisfies $T_r$-approximation on each box contained in $D$. We will show using $T_1$-approximation for $g$ and ultrametric computations that $\bar g$ satisfies $T_r$-approximation on the whole $D$.

Fix $x,y\in D$. If $x$ and $y$ are in the same box contained in $D$, then we are done. Assume then that they are not.

 Choose $v\in D$ such that $\ac(v_i)=\ac(y_i)$ and $\abs{v_i}=\abs{x_i}$, and in the case we moreover have $\ac(x_i)=\ac(y_i)$, set $v_i=x_i$. Such a $v$ exists by Hensel's lemma and the fact that $D$ is a cell around zero. Define $w\in D$ such that $w_i=v_i$ if $\abs{v_i}=\abs{y_i}$ and $w_i=y_i$ if $\abs{v_i}\neq\abs{y_i}$. We have that $w$ and $y$ lie in the same box contained in $D$. There are also $d,d',d''\in \cO_L^m$ as prescribed by $p_{r,j}$ such that $\bar g (x)=g(dx^r)$, $\bar g (w)=g(d'w^r)$, $\bar g (y)=g(d''y^r)$.

We then have $\abs{\bar g(x)-T_{\bar g,y}^{<r}(x)}$
\begin{align*}
&\leq  \max\set{ \abs{\bar g(x)-\bar g(w)},\abs{\bar g(w)-T_{\bar g,y}^{<r}(w)},\abs{T_{\bar g,y}^{<r}(w)-T_{\bar g,y}^{<r}(x)}}\\
&= \max\set{ \abs{g(dx^r)- g(d'w^r)},\abs{\bar g(w)-T_{\bar g,y}^{<r}(w)},\abs{T_{\bar g,y}^{<r}(w)-T_{\bar g,y}^{<r}(x)}}\\
&\leq \max\set{ \abs{dx^r-d'w^r},\abs{w-y}^r,\abs{T_{\bar g,y}^{<r}(w)-T_{\bar g,y}^{<r}(x)}}\\
&\leq \max\set{ \abs{x-y}^r,\abs{w-y}^r,\abs{T_{\bar g,y}^{<r}(w)-T_{\bar g,y}^{<r}(x)}}\\
&\leq \max\set{ \abs{x-y}^r,\abs{T_{\bar g,y}^{<r}(w)-T_{\bar g,y}^{<r}(x)}}\\
&\leq \abs{x-y}^r.
\end{align*}

The first inequality is by ultrametric triangular inequality, the second is by global $T_1$-property for $g$ and $T_r$-property on boxes for $\bar g$.  The third one is because for each $i$, we have $\abs{d_ix_i^r-d_i'w^r}\leq \abs{x_i-y_i}^r$. Indeed, there are three cases to consider. Either we have $x_i=w_i$ and $d_i=d_i'$ and then $d_ix_i^r-d_i'w^r=0$. Or we have $\abs{x_i}\neq \abs{y_i}$. In that case, $\abs{w_i}=\abs{y_i}$ and $\abs{d_i}=\abs{d'_1}\leq 1$. Then by ultrametric property we have $\abs{x_i-y_i}=\max\set{\abs{x_i},\abs{y_i}}$ and $\abs{d_ix_i^r-d_i'w^r}=\max\set{\abs{d_ix_i^r},\abs{d_i' w_i^r}}\leq \max\set{\abs{x_i},\abs{w_i}}^r=\max\set{\abs{x_i},\abs{y_i}}^r$.
The last case is when $\abs{x_i}= \abs{y_i}$ and $\ac(x_i)\neq \ac(y_i)$. In that case, $\abs{w_i}=\abs{x_i}$, $\ac(w_i)=\ac(y_i)$, $\abs{d_i}=\abs{d_i'}\leq 1$. We then have $\abs{x_i-y_i}=\abs{x_i}$ and by the choice made in the definition of $p_{r,j}$, $\ac(d_ix^r)\neq \ac(d_i'w^r)$ hence $\abs{d_ix_i^r-d_i'w^r}=\abs{d_ix^r}\leq \abs{x_i}^r=\abs{x_i-y_i}^r$.

The fourth inequality holds  because by definition of $w$, either $w_i=y_i$, either $w_i=x_i$, either $\abs{w_i}=\abs{x_i}=\abs{y_i}$ and $\ac(x_i)\neq \ac(w_i)=\ac(y_i)$. In those three cases, we have $\abs{w_i-y_i}\leq \abs{x_i-y_i}$.

To conclude the proof, it remains to prove the last inequality
\[
\abs{T_{\bar g,y}^{<r}(w)-T_{\bar g,y}^{<r}(x)}\leq \abs{x-y}^r.
\]

Suppose $T_{\bar g,y}^{<r}(x)=\sum_{k\in\NN^m,\abs{k}<r}c_k(x-y)^k$. For $A\subseteq \NN^m$, introduce the notation

\[
T_{\bar g,y}^{<r,A}(x)=\sum_{k\in\NN^m,\abs{k}<r, k\in A}c_k(x-y)^k.
\]
 Then set
 \[
 A'=\set{k=(k_1,\dots,k_m)\in \NN^m\mid k_i=0 \;\mathrm{if}\; \abs{y_i}\leq \abs{x_i-y_i}},
 \]
 and let $A$ be its complement. The condition can be rephrased in writing that $k_i=0$ if $w_i\neq x_i$. In particular, for $k\in A'$ we have $(x-y)^k=(w-y)^k$, hence $T_{\bar g,y}^{<r,A'}(x)=T_{\bar g,y}^{<r,A'}(w)$.

 Hence it remains to show that
 \[
\abs{T_{\bar g,y}^{<r,A}(w)-T_{\bar g,y}^{<r,A}(x)}\leq \abs{x-y}^r.
\]
We claim that
\[
\abs{T_{\bar g,y}^{<r,A}(x)}\leq \abs{x-y}^r \;\mathrm{and}\; \abs{T_{\bar g,y}^{<r,A}(w)}\leq \abs{x-y}^r,
\]
which implies the preceding inequality.

Since for each $i$, $\abs{w_i-y_i}\leq \abs{x_i-y_i}$, it is enough to prove that for each $k\in A$ such that $0<\abs{k}<r$,
\[
\abs{c_k(x-y)^k}\leq \abs{x-y}^r.
\]

From the definition of $A$, there is some $i_0$ such that $k_{i_0}>0$ and $\abs{y_{i_0}}\leq \abs{x_{i_0}-y_{i_0}}$. Suppose to lighten the notations that $i_0=1$. Set $\underline{r}=(\underline{r}_1,\dots,\underline{r}_m)$ with $\underline{r}_i=k_i$ for $i>1$ and $\underline{r}_i=r-\abs{k}+k_1\geq 1$.

Recall the bound for $\abs{c_k}$ obtained from Lemma \ref{lem-composition-rth-powers-multdim}.
We now compute, using this bound and the definition of $\underline{r}$ :
\begin{align*}
\abs{c_k(x-y)^k}&\leq \abs{y}_{\min,k}^r\abs{y^k}^{-1}\abs{(x-y)^k}\\
&\leq \abs{y^{\underline{r}}}\abs{y^k}^{-1}\abs{(x-y)}^\abs{k}\\
&\leq \abs{y^{\underline{r}-k}}\abs{(x-y)}^\abs{k}\\
&= \abs{y_1}^{r-\abs{k}}\abs{(x-y)}^\abs{k}\\
&\leq \abs{x_1-y_1}^{r-\abs{k}}\abs{(x-y)}^\abs{k}\\
&\leq \abs{(x-y)}^\abs{r}.
\end{align*}
This finishes the proof of the theorem. \end{proof}

\begin{proof}[Proof of Theorem \ref{uniform-Tr-approx-char0}]
The proof is  similar to the one of Theorem \ref{uniform-Tr-approx} above, using Theorem \ref{strong-T1-approx} and then precomposition by power functions. One just needs to delete the application of compactness, and, instead of using the map $\xi$ which chooses and exploits the lifts of cosets of $r$-th powers in the residue field, one uses parameters from $R_r$ to paste pieces together. (That a factor $b_r$ comes in is because in this general case the pasting is more rough by the lack of equality between the number of cosets of the $r$-th powers in the residue field equals the number of solutions of $x^r=1$ in the residue field, in general.) The rest of the proof is completely similar.
\end{proof}

\section{Points of bounded degree in $\FF_q[t]$}\label{sec:Fqt-count}

\subsection{A counting theorem}
The goal of this section is to prove the following theorem, of which Theorem \ref{thmintro_points-curves} is a particular case.
Recall from the introduction that, for $q$ a prime power and  $n$ a positive integer, $\FF_q[t]_n$ is the set of polynomials with coefficients in $\FF_q$ and degree (strictly) less than $n$, and, for an affine variety $X$ defined over a subring of  $\FF_q\llp t\rrp$,  $X(\FF_q[t])_n$ denotes the subset of $X(\FF_q\llp t\rrp)$ consisting of points whose coordinates lie in $\FF_q[t]_n$. Also, for a subset $A$ of $\FF_q\llp t\rrp^m$, write $A_n$ for the subset of $A$ consisting of points whose coordinates lie in $\FF_q[t]_n$.

For an affine (reduced) variety $X\subset \AA_R^m$ with $R$ an integral domain contained in an algebraically closed field $K$, we define the degree of $X$ as  the degree of the closure of $X_K$ in $\PP_K^m$.
For example, if $X$ is a hypersurface given by one (reduced) equation $f$, then the degree of $X$ equals the (total) degree of $f$.

\begin{thm}
\label{thm:main:uniform:countingFq}
Let $d$, $m$ and $\delta$ be positive integers. 
Then there exist real numbers $C=C(d,m,\delta)$ and $N=N(d,m,\delta)$ such that for each prime $p>N$, each power $q=p^\alpha$ with $\alpha>0$ an integer, each integer $n>0$ and each irreducible variety $X\subseteq \AA^m_{\FF_q\llp t\rrp}$ of degree $\delta$ and dimension $d$ one has
\[
\# X(\FF_q[t])_n \leq C n^2 q^{n(d-1) + \ceil{\frac{n}{\delta}}}.
\]
\end{thm}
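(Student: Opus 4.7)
\smallskip

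\noindent\textbf{Proof plan.} Let $L=\FF_q\llp t\rrp$. Since every element of $\FF_q[t]_n$ lies in $\cO_L=\FF_q\llb t\rrb$, we have $X(\FF_q[t])_n\subset X(\cO_L)\cap\cO_L^m$, and the points are distinguished by the additional constraint that, in each coordinate, all Laurent coefficients of index $\geq n$ vanish. The overall plan is the non-Archimedean Bombieri--Pila determinant method, in the form used by Marmon \cite{marmon_generalization_2010}, with the uniform parametrization Theorem~\ref{uniform-Tr-approx} replacing the analytic parametrizations that were available only for fixed $q$ in Sedunova's work.

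First I will apply Theorem~\ref{uniform-Tr-approx} to the definable family of all (reduced) affine subvarieties of $\AA^m$ of dimension $d$ and degree $\leq\delta$, parametrized, say, by their Chow coordinates. This is a definable family (in fact a constructible set in an algebraic sense, hence a fortiori definable in $\cL_{\rm DP}$) of constant fibre dimension $d$. Theorem~\ref{uniform-Tr-approx} then yields constants $C_0,N_0$ such that for every $L\in\cC_{\cO,N_0}$ and every integer $r>0$, the set $X(\cO_L)$ admits a $T_r$-parametrization by at most $C_0r^d$ maps $g_i\colon U_i\subseteq\cO_L^d\to X_i\subseteq X(\cO_L)$.

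Next I will cover each $U_i$ by closed balls $B\subset\cO_L^d$ of radius $q^{-s}$, for a parameter $s$ to be chosen in terms of $n,r,\delta$; each $U_i$ is covered by at most $q^{sd}$ such balls. By $T_r$-approximation, on each box $B$ the map $g_i$ is within $q^{-sr}$ of a polynomial map of degree $<r$ in $d$ variables. Given points $z_1,\dots,z_M\in g_i(B)\cap X(\FF_q[t])_n$, I will consider the $M\times\binom{E+m}{m}$ matrix $\bigl(\phi_k(z_\ell)\bigr)_{k,\ell}$, where $\phi_k$ runs over monomials in the $m$ coordinates of total degree $\leq E$. Pulling back through $g_i$, each entry is an analytic function on $B$ close to a polynomial of degree $<Er$ in $d$ variables, so a Vandermonde/Siegel-type estimate yields an upper bound on the $t$-adic valuation of any $\mu\times\mu$ minor (with $\mu=\binom{E+m}{m}$) in terms of $s,r,E$. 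On the other hand, since the coordinates of the $z_\ell$ lie in $\FF_q[t]_n$, any nonzero such minor, viewed as a polynomial in $\FF_q[t]$, has degree at most $Cn E$. Choosing $s,r,E$ so that the upper bound from $T_r$-approximation beats the trivial lower bound forces the minor to vanish, so the $z_\ell$ all lie on an algebraic hypersurface of degree $<E$; this is the standard output of the determinant method. By Bezout applied to the irreducible variety $X$ of degree $\delta$ and dimension $d$, the intersection $X\cap H$ has dimension $d-1$ and degree at most $E\delta$, and one iterates the argument on this lower-dimensional piece, with a base case (dimension $0$) given by a direct count bounded by the degree.

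The main obstacle is the bookkeeping: balancing $r$, $s$ and $E$ (the last to be chosen roughly as $\ceil{n/\delta}$) so that, after summing over the $C_0r^d\cdot q^{sd}$ boxes, iterating $d$ times through dimensions, and collecting the base-case contribution of order $q^{\ceil{n/\delta}}$, one obtains exactly the exponent $n(d-1)+\ceil{n/\delta}$, with a polynomial prefactor $n^2$ absorbing the iteration and the $r^d$ overhead. The $n(d-1)$ contribution comes from the $d-1$ intermediate dimension drops (each essentially a trivial count in one coordinate), while the $\ceil{n/\delta}$ exponent arises at the last step, where the degree $\delta$ of $X$ interacts with the degree of the auxiliary hypersurface to produce the Bombieri--Pila saving. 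Carrying out this optimization along Marmon's scheme, but with Theorem~\ref{uniform-Tr-approx} furnishing a $T_r$-parametrization uniformly in $q$ and with $C_0 r^d$ pieces, is what enables the uniformity in both $q$ and $n$ claimed in the statement.
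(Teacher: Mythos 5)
Your proposal identifies the right broad ingredients (Theorem~\ref{uniform-Tr-approx}, a Bombieri--Pila determinant lemma, B\'ezout), but it diverges from the paper's argument in a way that leaves real gaps.

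First, the reduction to lower dimensions. The paper does \emph{not} iterate the determinant method through dimensions. It first reduces purely geometrically to the case $m=2$, $d=1$: for $d=1$, $m>2$ it projects linearly to $\AA^2$ with uniformly bounded finite fibres (no $q$-factor is lost); for $d>1$ it slices by hyperplanes $H_b$ with $b$ ranging over $\FF_q[t]_n$ and applies the inductive bound to each slice, so the factor $q^{n(d-1)}$ arises by summing over the $q^n$ values of $b$ at each of $d-1$ slicings. Only after this reduction does the paper run the determinant argument, once, on a plane curve. Your plan instead proposes to run the determinant argument directly in dimension $d$, obtain a hypersurface $H$, pass to $X\cap H$ (now of degree $\approx E\delta$ and typically reducible), and iterate to dimension $0$; but then in the bookkeeping paragraph you attribute the $q^{n(d-1)}$ factor to ``$d-1$ intermediate dimension drops, each essentially a trivial count in one coordinate,'' which is a description of the slicing argument, not of iterated determinant steps. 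These two mechanisms are not the same, and the estimate you would actually get from iterating determinant steps is not the claimed one: the degree of the successive intersections grows roughly geometrically in $E\approx n/\delta$, the reducibility at each stage forces you out of the hypotheses of the theorem, and the product of the numbers of boxes and pieces across the $d$ levels does not collapse to $q^{n(d-1)}$. This part of the proof needs to be redone along the paper's slicing scheme.

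Second, and more critically, your determinant argument uses \emph{all} monomials of degree $\leq E$ in the $m$ ambient variables. That cannot work once $\mu=\binom{E+m}{m}$ exceeds the Hilbert function $\HF_I(E)$ of the ideal $I$ of $X$: some nontrivial linear combination of these monomials lies in $I_{\leq E}$, so the $\mu\times\mu$ minors vanish \emph{identically} on $X$, independently of any $T_r$-estimate, and the auxiliary polynomial you extract from a maximal minor may vanish on the whole of $X$, so B\'ezout gives nothing. The paper's proof avoids this by restricting to the monomials \emph{not} in the leading term ideal $\LT(I)$, so that (i) $\mu=\HF_I(s)\approx\delta s$ in the curve case, and (ii) the polynomial built from a maximal minor has a nonzero class in $K[x_0,x_1,x_2]_{\leq s}/I_{\leq s}$, hence does not contain $X$. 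Moreover the sharp exponent $\ceil{n/\delta}$ (rather than something weaker like $2/\delta$) comes from the refined valuation estimate in terms of $\sigma_{I,i}$ and from Salberger's Lemma~\ref{lem-salberger}, which bounds $a_{I,1}+a_{I,2}\le d/(d+1)=1/2$ in the curve case; none of this appears in your proposal, and replacing it by the crude degree bound ``$\leq CnE$'' on the minor (which is in any case off by a factor of order $\mu$) would not yield the stated inequality.

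In short: keep the top-level idea (Theorem~\ref{uniform-Tr-approx} plus a determinant argument), but replace the iterated-determinant plan by the paper's geometric reduction to plane curves via projection and hyperplane slicing, and upgrade the monomial basis in the determinant step to the Hilbert-function basis together with Salberger's bound.
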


We first give a bound for a so-called naive degree. Define the naive degree of a variety $X\subset \AA_R^m$ with $R$ an integral domain as the minimum, taken over all tuples of (nonzero) polynomials $f=(f_1,\dots,f_s)$ over $R$ with $X(K) =\{x\in K^m\mid f(x)=0\}$,  of the product of the degrees of the $f_i$.

\begin{lem}
\label{lem:naive:deg}
Let $d$, $m$, and $\delta$ be positive integers. 
Then there exist numbers $C=C(d,m,\delta)$ and $N=N(d,m,\delta)$ such that for each prime $p>N$, each power $q=p^\alpha$ with $\alpha>0$ an integer, and each geometrically irreducible variety $X\subseteq \AA^m_{\FF_q\llp t\rrp}$ of degree $\delta$ and dimension $d$,
one has that the naive degree of $X$ is bounded by $C$.
\end{lem}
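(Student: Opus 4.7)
The plan is to reduce the claim to a classical fact of effective algebraic geometry: that a subvariety of $\AA^m$ of bounded degree and dimension is cut out set-theoretically by a bounded number of polynomials of bounded degree, with all bounds depending only on $m$, $d$, $\delta$.

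First, I would apply a theorem of Heintz (or a similar effective Nullstellensatz-style bound) to conclude that $X_{\overline L}$, as a variety of degree $\delta$ and dimension $d$ in $\AA^m_{\overline L}$, is the common zero set of at most $s_0 = s_0(m,d,\delta)$ polynomials of degree at most $\delta$, where $\overline L$ is an algebraic closure of $L = \FF_q\llp t\rrp$.

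Second, I would descend these defining polynomials to $L$ using the geometric irreducibility of $X$. Since $X$ is defined over $L$, the ideal $I(X_{\overline L}) \subseteq \overline L[x_1, \ldots, x_m]$ is the extension of scalars of the ideal $I(X) \subseteq L[x_1, \ldots, x_m]$. Hence the degree-$\leq \delta$ piece of $I(X_{\overline L})$ is obtained from its $L$-counterpart by extension of scalars, and an $L$-basis of $I(X) \cap L[x]_{\leq \delta}$, of cardinality at most $\binom{m+\delta}{m}$, already cuts out $X$ set-theoretically. This bounds the naive degree of $X$ by $\delta^{\binom{m+\delta}{m}}$, giving the desired $C(d, m, \delta)$.

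The hypothesis $p > N$ is not intrinsic to the argument sketched above, which is characteristic-free, and appears to be included here mainly to match the conventions of the surrounding statements and of Theorem \ref{thm:main:uniform:countingFq}. As an alternative approach, one could run a compactness and ultraproduct argument in the spirit of Section \ref{section_analytic_parametrizations}: if no uniform $C$ worked, then a non-principal ultraproduct of counter-examples with $p_i \to \infty$ would produce a geometrically irreducible variety of degree $\delta$ and dimension $d$ over the ultraproduct $\prod \FF_{q_i}\llp t\rrp / \mathcal{U}$, a field of residue characteristic zero, having unbounded naive degree. Since the property ``naive degree $\leq C$'' is first-order (existence of defining polynomials with prescribed degrees), this would contradict the characteristic-zero bound established above.

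The main obstacle I anticipate is identifying and citing the precise effective bound on the number of defining polynomials (the function $s_0(m,d,\delta)$) and being careful about uniformity in the base field; once this is in place, both the descent to $L$ and the resulting bound on the naive degree are essentially routine.
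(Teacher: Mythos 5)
Your proof is correct and the bound it produces is a valid $C(d,m,\delta)$, but it takes a genuinely different route from the paper. The paper appeals to the theory of Chow forms: a degree-$\delta$, $d$-dimensional variety in $\AA^m$ is determined set-theoretically by a degree-$\delta$ hypersurface in the Grassmannian $G(m-d-1,m)$, and from the Chow form one extracts a specific system of $m(d+1)$ equations of degree at most $\delta$ cutting out $X$ set-theoretically, giving a naive-degree bound of $\delta^{m(d+1)}$. Your argument instead uses the classical fact (essentially Mumford's cone construction, rather than Heintz, whose results concern degree bounds in the effective Nullstellensatz and are not quite what you need; the fact you actually invoke is older and more elementary) that an irreducible degree-$\delta$ variety is already cut out set-theoretically by the degree-$\leq\delta$ part of its ideal, combined with the observation that for $X$ geometrically integral one has $I(X_{\overline L})=I(X)\otimes_L\overline L$, so an $L$-basis of $I(X)\cap L[x]_{\leq\delta}$ suffices. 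This gives a weaker but still explicit bound of $\delta^{\binom{m+\delta}{m}}$, which is all the lemma requires. You are also right that the hypothesis $p>N$ is cosmetic here: both arguments are characteristic-free, and the paper includes it only for notational consistency with Theorem \ref{thm:main:uniform:countingFq}. Your ultraproduct fallback is sound as well, although it is superfluous once the descent step is in place; the approach is a good fit with the rest of the paper's methods but is not how the paper proceeds in this particular lemma.
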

\begin{proof}
From the theory of Chow forms, see \cite{sam_alg_ga} or \cite{cat_chow}, a variety  $X\subseteq \AA^m_{\FF_q\llp t\rrp}$ of degree $\delta$ and dimension $d$ is determined set-theoretically by a hypersurface of degree $\delta$ in the Grasmanniann of $G(m-d-1,m)$ of $m-d-1$-dimensional vector subspaces of the $m$-dimensional space. As explained for example in \cite{cat_chow}, one can construct from such a hypersurface a system of $m(d+1)$ equations of degrees at most $\delta$ such that their zero set coincide set-theoretically with $X$. Hence the naive degree of $X$ is bounded by $\delta m(d+1)$.
\end{proof}

The following  trivial bound for points of bounded height is typical.

\begin{lem}
\label{lem:triv:countingFq}
Let $d$, $m$,  and $\delta$  be positive integers. 
Then there exist real numbers $C=C(d,m,\delta)$ and $N=N(d,m,\delta)$ such that for each prime $p>N$, each power $q=p^\alpha$ with $\alpha>0$ an integer, each integer $n>0$ and each irreducible variety $X\subseteq \AA^m_{\FF_q\llp t\rrp}$ of degree $\delta$ and dimension $d$,
one has
\[
\# X(\FF_q[t])_n \leq C q^{nd} .
\]
\end{lem}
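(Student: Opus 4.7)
The strategy is to produce, for each $q$ with $p>N$, an $\FF_q$-linear surjection $\pi\colon\AA^m\to\AA^d$ whose restriction to $X$ is a finite morphism of degree at most $\delta$, and then exploit that such a $\pi$ preserves $\FF_q[t]_n$-points.

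After enlarging $N$ if needed, Lemma \ref{lem:naive:deg} lets us assume $X$ is cut out by polynomials of degree bounded by some $D=D(d,m,\delta)$. Passing to the projective closure $\bar X\subseteq\PP^m$, finding a good $\pi$ amounts to finding a codimension-$(d+1)$ projective subspace $L\subseteq\PP^m$, defined over $\FF_q$, disjoint from $\bar X$. Since $\bar X$ has dimension $d$ and degree $\delta$, the locus of ``bad'' $L$ (those meeting $\bar X$) is cut out in the appropriate Grassmannian by the Chow form of $\bar X$, a polynomial of degree $\delta$ whose coefficients lie in $\FF_q\llp t\rrp$. Writing this Chow form as an $\FF_q\llp t\rrp$-linear combination of polynomials with coefficients in $\FF_q$, at least one such summand is a nonzero $\FF_q$-polynomial of degree $\leq\delta$, and a Schwartz-Zippel estimate forces it not to vanish on all of $\mathrm{Gr}(\FF_q)$ once $q>N$ for some $N=N(d,m,\delta)$. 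Any $L$ at which this summand is nonzero yields a good $\pi$.

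Because $\pi$ is $\FF_q$-linear, it carries $(\FF_q[t]_n)^m$ into $(\FF_q[t]_n)^d$, hence restricts to a map $X(\FF_q[t])_n\to\AA^d(\FF_q[t])_n$. The target has cardinality $q^{nd}$, and the generic fiber of $\pi|_X$ has size $\deg\bar X=\delta$ by B\'ezout (it is the intersection of $\bar X$ with a linear subspace of complementary dimension and degree one). As $\pi|_X$ is finite, every fiber has size at most $\delta$. Combining, $\#X(\FF_q[t])_n\leq \delta\cdot q^{nd}$, and we may take $C=\delta$.

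The main obstacle is finding a good $\pi$ that is $\FF_q$-rational rather than merely $\FF_q\llp t\rrp$-rational. The Schwartz-Zippel step is the crux: although the Chow form has coefficients in $\FF_q\llp t\rrp$, the hypothesis $p>N$ forces $q$ to be large compared to the (uniformly bounded) degree $\delta$, and the decomposition of the Chow form into $\FF_q$-polynomials then lets one extract an $\FF_q$-rational good projection.
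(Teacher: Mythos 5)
Your proof follows the same route as the paper's one-line argument---Noether normalization combined with the degree bound from Lemma~\ref{lem:naive:deg}---and usefully makes explicit the point that forces the hypothesis $p>N$: the projection must be $\FF_q$-rational rather than merely $\FF_q\llp t\rrp$-rational, which you secure via the Chow form and a Schwartz--Zippel estimate. One genuine, though easily fixed, gap: the center of projection $L$ must lie in the hyperplane at infinity $\{x_0=0\}$, since otherwise the projection from $L$ is not an affine-linear map $\AA^m\to\AA^d$ and does not carry $(\FF_q[t]_n)^m$ into $(\FF_q[t]_n)^d$. You should therefore run the Chow form argument for $\bar X\cap\{x_0=0\}$ inside $\PP^{m-1}$; since $X$ has affine points, this intersection has dimension $d-1$ and degree at most $\delta$, so everything goes through. (This is in fact automatic once you insist from the start that $\pi$ is a linear, not just affine, surjection---then $L=\PP(\ker\pi)$ sits at infinity by construction---but it should be said.) A minor imprecision: the Chow form read in an affine chart of the Grassmannian has degree bounded by a quantity depending on $d$, $m$, $\delta$ (roughly $\delta(m-d)$), not by $\delta$ alone; this does not affect the Schwartz--Zippel step.
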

\begin{proof}
The lemma follows easily from Noether's normalization lemma and Lemma \ref{lem:naive:deg}.
\end{proof}

%

Let us first  reduce  the statement of Theorem \ref{thm:main:uniform:countingFq}  to the case of planar curves, similarly as in
\cite{Pila-ast-1995}. In this section, definable means definable in the language $\cL_\mathrm{DP}$ of Setting \ref{setting-T_DP} and with $\cO=\ZZ$.

\begin{proof}[Reduction of Theorem \ref{thm:main:uniform:countingFq} to the case $m=2$ and $d=1$.]
Fix positive integers $d,m,\delta$.
By Lemma \ref{lem:naive:deg}, irreducible varieties in $\AA^m$ of dimension $d$ and of degree $\delta$ form a definable family of definable sets, say, with parameter $z$ in a definable (and Zariski-constructible) set $Z$; write $X_z$ for the variety in $\AA^m$ corresponding to the parameter $z\in Z$.
Assume first that $m>2$ and $d=1$. Consider the family of linear projections $p_{a,b} : \AA^m\to \AA^2$ written in coordinates $x=\sum a_i x_i$ and $y=\sum b_i y_i$ and with parameters $(a,b)\in \AA^{2m}$. Then, for each $z\in Z$, there is a non-empty Zariski open subset of parameters $O_z\subseteq \AA^{2m}$ such that first of all $p_{a,b}$ is surjective and secondly, the varieties $X_z$ and $p_{a,b}(X_z)$ have the same degree $\delta$ (and are both irreducible of dimension $1$) for all $(a,b)\in O_Z$. Clearly the opens $O_z$ form a definable family of definable sets with parameter $z\in Z$.

Now suppose that the prime $p$ is large enough and that $q=p^\alpha$ for some $\alpha$. Since the complement of $O_z$ is of dimension less than $2m$ by Lemma \ref{lem:triv:countingFq}, and since the $O_z$ form a definable family, we can find for each $z\in Z(\FF_q\llp t  \rrp )$ a point $(a^0,b^0)$  in $O_z(\FF_q[t])_1$ (hence, so to say, a tuple of polynomials in $t$ over $\FF_q$ and of degree $0$).
Hence, $p_{a^0,b^0}$ maps points in $\FF_q[t]^m_n$ to points in $\FF_q[t]^2_n$.
Furthermore, the fibers of $p_{a^0,b^0}$ on $X_z$ are finite, uniformly in $z$, say, bounded by $C$.
We thus have that for each large enough $p$, each $z$ in $Z(\FF_q\llp t \rrp )$, and each $n>0$, that
$$
\# X_{z}(\FF_q[t])_n  \leq C \# p(X_z) (\FF_q[t])_n.
$$
Hence the result for $d=1$ and general $m>1$ follows from the case $d=1$ and $m=2$.

Assume now that $m\geq 2$ and $d>1$. By a projection argument as above, we can assume that $d=m-1$. Consider the family of hyperplanes $H = H_{\alpha,b}$ with equation $\sum \alpha_i x_i=b$ and parameters $\alpha$ and $b$. Then for each $z\in Z$ there is a non-empty Zariski open subset $O_z$ of $\AA^{m+1}$ such that if $(\alpha,b)$ lies in  $O_z$, then $X_z\cap H_{\alpha,b}$ is irreducible, of degree $\delta$ and dimension $d$. Hence, similarly as above, for large enough primes $p$ and with $q=p^\alpha$,
we can find, for each $z$ in $Z(\FF_q\llp t \rrp )$ a point  $(\alpha^0,b^0)$  in $O_z(\FF_q[t])_1$. Now consider the family of hyperplanes $H_b$ of equations $\sum \alpha^0_i x_i =b$ with parameter $b$ running over $\FF_q\llp t \rrp$. Since $(\alpha^0,b^0)$ belongs to $O_z(\FF_q[t])_1$, and by construction, there are at most finitely many values for $b$ such that $(\alpha^0,b)\not\in O_z(\FF_q\llp t \rrp )$, say, $b_1,\dots,b_k$. In any case we can assume that $X_z\cap H_{b_j}$ is of dimension at most $m-1$ for each $j$, and hence that
$$
\# (X_z\cap H_{b_j})        \leq C q^{m-1 }
$$
for some $C$ which is independent of $q$ and $n$, by Lemma \ref{lem:triv:countingFq}.
To treat the remaining part, we apply the induction hypothesis to $X'_z=(X_z\cap H_b)$ for $b$ outside $\set{b_1,\dots,b_k}$, and we take the sum of the bounds over all values of $b$ in $\FF_q[t]_n$.
\end{proof}

\label{sec-point-bnd-degree}
\subsection{Determinant lemma}
We fix the following notations for the rest of the paper. For $\alpha=(\alpha_1,\cdots,\alpha_m)$ in $\NN^m$, set $\abs{\alpha}=\alpha_1+\cdots+\alpha_m$. Set also
\[
\Lambda_m(k):=\set{\alpha\in \NN^m\mid \abs{\alpha}=k}, \Delta_m(k):=\set{\alpha\in \NN^m\mid \abs{\alpha}\leq k},
\]
\[
L_m(k):=\# \Lambda_m(k), D_m(k)=\# \Delta_m(k).
\]

\begin{lem}[{\cite[Lemma 3.3.1]{CCL-PW}}]
\label{lem:det:estimate}
Let $K$ be a discretely valued henselian field.
Fix $\mu,r \in \NN$, and $U$ an open subset of $K^m$ contained in a box that is a product of $m$ closed balls of valuative radius $\rho$.
Fix $x_1,\ldots,x_\mu\in U$, and functions $\psi_1,\ldots,\psi_\mu : U\to K$.

Assume the following :
\begin{itemize}
\item the integer $r$ satisfies
\[
D_m(r-1)\leq \mu< D_m(r);
\]
\item
the functions $\psi_1,\ldots,\psi_\mu$ satisfy $T_r$ on $U$.
\end{itemize}
Then
\[
\ord_t(\det(\psi_i(x_j)))\geq \rho e,
\]
where $e=\sum_{i=0}^{r-1}iL_m(i)+r(\mu-D_m(r-1))$.
\end{lem}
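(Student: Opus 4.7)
The plan is to adapt the classical Bombieri--Pila determinant method to our ultrametric setting: I would expand $\det(\psi_i(x_j))$ through two successive column-multilinearities and then bound each resulting term in valuation via the ultrametric inequality.

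First, I would fix a reference point $y\in U$. For each $i$, the $T_r$-approximation hypothesis furnishes a polynomial $P_i := T^{<r}_{\psi_i,y}$ of total degree less than $r$ with coefficients $c_{i,\alpha}\in\cO_K$, such that $R_{i,j} := \psi_i(x_j) - P_i(x_j)$ satisfies $\ord(R_{i,j})\ge r\rho$. Writing each column of $M = (\psi_i(x_j))$ as $(P_i(x_j))_i + (R_{i,j})_i$ and applying multilinearity in columns gives
\[
\det(M) = \sum_{S\subseteq\{1,\ldots,\mu\}}\det(M^{(S)}),
\]
where the $j$-th column of $M^{(S)}$ is $(R_{i,j})_i$ for $j\in S$ and $(P_i(x_j))_i$ otherwise. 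Since each polynomial column is a $K$-linear combination of the $D_m(r-1)$ fixed vectors $(c_{i,\alpha})_i$ indexed by $\alpha$ with $|\alpha|<r$, any $M^{(S)}$ with more than $D_m(r-1)$ polynomial columns has linearly dependent columns and contributes $0$. Hence only subsets $S$ with $|S|\ge \mu - D_m(r-1)$ contribute.

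The next step is to expand each polynomial column monomial by monomial, using $P_i(x_j)=\sum_{|\alpha|<r}c_{i,\alpha}(x_j-y)^\alpha$, to obtain
\[
\det(M^{(S)}) = \sum_{(\alpha_j)_{j\notin S}}\Bigl(\prod_{j\notin S}(x_j-y)^{\alpha_j}\Bigr)\det(N^{(S,\alpha)}),
\]
where $N^{(S,\alpha)}$ has column $(R_{i,j})_i$ for $j\in S$ and column $(c_{i,\alpha_j})_i$ for $j\notin S$. Terms with two coinciding $\alpha_j$'s contain two equal columns and vanish, so the sum runs effectively over tuples with distinct entries. For a surviving term, the Leibniz expansion of $N^{(S,\alpha)}$ together with $c_{i,\alpha}\in\cO_K$ and $\ord(R_{i,j})\ge r\rho$ yields $\ord\det(N^{(S,\alpha)})\ge |S|\cdot r\rho$, while $\ord((x_j-y)^{\alpha_j})\ge|\alpha_j|\rho$ since $U$ lies in a product of closed balls of valuative radius $\rho$. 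Hence every such term has valuation at least $\rho\bigl(|S|\,r+\sum_{j\notin S}|\alpha_j|\bigr)$, and by the ultrametric inequality the same is a lower bound for $\ord_t(\det(M))$.

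The main technical point, and the step I expect to be the most delicate, is the combinatorial optimization of this lower bound over all admissible pairs $(S,(\alpha_j))$. For fixed $|S|=s$, the minimum of $\sum_{j\notin S}|\alpha_j|$ over distinct multi-indices of degree $<r$ is attained by taking the $\mu-s$ of smallest total degree, layer by layer in $|\alpha|$, using $L_m(k)+D_m(k-1)=D_m(k)$ to bookkeep. A short telescoping computation should then show that the resulting bound is decreasing as more polynomial columns are admitted (the successive increments being multiples of $k+1-r<0$), so that the minimum over $s\ge\mu-D_m(r-1)$ is attained precisely at $s=\mu-D_m(r-1)$, yielding $\rho\bigl(\sum_{i=0}^{r-1}iL_m(i)+r(\mu-D_m(r-1))\bigr)=\rho e$, which is the desired bound.
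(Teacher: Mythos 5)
The paper does not prove this lemma; it quotes it verbatim from \cite[Lemma~3.3.1]{CCL-PW}, so there is no internal proof to compare against. Your argument is correct and is the standard non-Archimedean determinant-method proof one would expect there: split each column into polynomial plus $T_r$-remainder by multilinearity, discard terms with more than $D_m(r-1)$ polynomial columns by a rank argument, expand the survivors monomially and discard coinciding multi-indices by antisymmetry, and minimize $\rho\bigl(|S|r+\sum_{j\notin S}|\alpha_j|\bigr)$; the layer-by-layer telescoping shows the minimum is attained at $|S|=\mu-D_m(r-1)$ with all monomials of degree at most $r-1$ used, which yields exactly $\rho e$.
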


\subsection{Hilbert functions}

Fix a field $K$.
For $s$ a positive integer, denote $K[x_0,\ldots,x_n]_s$ the space of homogenous polynomials of degree $s$. Let $I$ a homogenous ideal of $K[x_0,\ldots,x_n]$, associated to an irreducible variety of dimension $d$ and degree $\delta$ of $\PP^n_K$. Let $I_s=I\cap  K[x_0,\ldots,x_n]_s$ and $\HF_I(s)=\dim_K K[x_0,\ldots,x_n]_s/I_s$ the (projective) Hilbert function of $I$. The Hilbert polynomial $\HP_I$ of $I$ is a polynomial such that for $s$ big enough, $\HP_I(s)=\HF_I(s)$. It is a polynomial of degree $d$ and leading coefficient $\delta/d!$.

Fix some monomial ordering in the sense of \cite{cox_ideals_2015}. Denote by $\LT(I)$ the ideal generated by leading terms of elements of $I$. By \cite{cox_ideals_2015}, the Hilbert functions of $I$ and $\LT(I)$ are equal. It follows that
\[
\HF_I(s)=\# \set{ \alpha\in \Lambda_{n+1}(s)\mid x^\alpha\notin \LT(I)}.
\]
Define also for $i=0,\ldots,n$,
\begin{equation}
\label{eqn-def-sigma}
\sigma_{I,i}(s)=\sum_{\alpha\in \Lambda_{n+1}(s), x^\alpha\notin \LT(I)} \alpha_i.
\end{equation}
Hence, we have $s\HF_I(s)=\sum_{i=0}^n \sigma_{I,i}(s)$. The function $\sigma_{I,i}$ is also equal to a polynomial function of degree at most $d+1$, for $s$ large enough. It follows that there exist non-negative real numbers $a_{I,i}$ such that
\begin{equation}
\label{eqn-HPratio-DL}
\frac{\sigma_{I,i}(s)}{s\HP_I(s)}=a_{I,i}+O(1/s)
\end{equation}

when $s$ goes to $+\infty$.

We will also use the following lemma of Salberger \cite{Salberger-dgc}, which is the reason why we will use a projective embedding in the proof of Theorem \ref{thm:main:uniform:countingFq}.
\begin{lem}[\cite{Salberger-dgc}]
\label{lem-salberger}
Let $X$ be a closed equidimensional subscheme of dimension $d$ of $\PP^m_K$. Assume that no irreducible component of $X$ is contained in the hyperplane at infinity defined by $x_0=0$. Let $<$ be the monomial ordering defined by $\alpha \leq \beta$ if $\abs{\alpha}<\abs{\beta}$ or $\abs{\alpha}=\abs{\beta}$ and for some $i$, $\alpha_i>\beta_i$ and $\alpha_j=\beta_j$ for $j<i$. Then
\[
a_{I,1}+\ldots+a_{I,m}\leq \frac{d}{d+1}.
\]
\end{lem}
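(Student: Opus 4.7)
The plan is to reduce the lemma to a single inequality on $a_{I,0}$ and then establish it by a combinatorial argument tied to the specific monomial order. Let $M(s) = \{\alpha \in \Lambda_{m+1}(s) : x^\alpha \notin \LT(I)\}$, so that $\#M(s) = \HF_I(s)$ and $s\,\HF_I(s) = \sum_{i=0}^m \sigma_{I,i}(s)$; dividing by $s\,\HP_I(s)$ and letting $s \to \infty$ gives $\sum_{i=0}^m a_{I,i} = 1$. The lemma is therefore equivalent to proving $a_{I,0} \geq \tfrac{1}{d+1}$, and in fact equality will hold.

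The central observation is that the chosen monomial order is rigged so that, within a fixed total degree, monomials of $x_0$-degree $0$ are maximal: if $\alpha_0 > \beta_0$ then the first differing coordinate is $i=0$ and the convention declares $\alpha \leq \beta$. Consequently, if $f \in I$ is homogeneous and $\LT(f)$ has $x_0$-degree $\geq 1$, then writing $f = x_0 g + h$ with $h \in K[x_1, \ldots, x_m]$ forces $h = 0$ (else $\LT(h)$, of $x_0$-degree $0$, would dominate), so $x_0 \mid f$. The equidimensionality of $X$ together with the hypothesis that no component of $X$ lies in $\{x_0 = 0\}$ ensures that no associated prime of $I$ contains $x_0$, so $x_0$ is a non-zero-divisor modulo $I$ and $f = x_0 g \in I$ forces $g \in I$. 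These two facts give the key bijection $\alpha \mapsto \alpha + e_0$ from $M(s-1)$ onto $\{\alpha \in M(s) : \alpha_0 \geq 1\}$, where $e_0 = (1,0,\ldots,0)$: the nontrivial direction is that if $x^{\beta + e_0} \in \LT(I)$ for $\beta \in M(s-1)$, one would obtain $f = x_0 g \in I$ with $\LT(g) = x^\beta$, contradicting $\beta \in M(s-1)$; the reverse direction is immediate from the monomial-ideal closure of $\LT(I)$.

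From the bijection one reads off the recurrence
\[
\sigma_{I,0}(s) \,=\, \sum_{\beta \in M(s-1)} (\beta_0 + 1) \,=\, \sigma_{I,0}(s-1) + \HF_I(s-1).
\]
Since $\HP_I$ has degree $d$ and leading coefficient $\delta/d!$, iterating the recurrence gives $\sigma_{I,0}(s) \sim \delta\,s^{d+1}/(d+1)!$, whereas $s\,\HP_I(s) \sim \delta\,s^{d+1}/d!$; hence $a_{I,0} = 1/(d+1)$ and the lemma follows. The main delicacy lies in the order-theoretic step upgrading ``$\LT(f)$ has $x_0$-degree $\geq 1$'' to ``$x_0 \mid f$'', which is the unique place where the precise monomial order is needed; the hypothesis on the hyperplane at infinity then enters only through ensuring $x_0$-regularity modulo $I$.
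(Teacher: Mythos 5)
The paper itself does not prove this lemma; it is imported verbatim from Salberger's manuscript, so there is no internal proof for me to compare your argument against. That said, your reconstruction is mathematically correct and is, to the best of my knowledge, the standard way to obtain this bound (indeed you prove the sharper equality $a_{I,0}=1/(d+1)$, which is what Salberger's result actually gives under these hypotheses).

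Two points worth spelling out for completeness. First, the reduction $\sum_{i=0}^m a_{I,i}=1$ and the conclusion $a_{I,0}=1/(d+1)\Rightarrow\sum_{i\ge1}a_{I,i}=d/(d+1)$ are fine, and the order-theoretic step (if $\LT(f)$ has $x_0$-degree $\ge1$ then $x_0\mid f$) is exactly where the specific degree--reverse-lex flavour of the order is used; you state this cleanly, and you should also note that multiplication by $x_0$ is order-preserving, so $\LT(x_0g)=x_0\LT(g)$, which is what makes the bijection $M(s-1)\to\{\alpha\in M(s):\alpha_0\ge1\}$ go through. Second, and more substantively, the one place your argument leans on an unstated convention is the claim that equidimensionality plus ``no irreducible component in $\{x_0=0\}$'' forces $x_0$ to be a non-zero-divisor modulo $I$. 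This is true if ``equidimensional'' is read as ``unmixed'' (no embedded primes), which is needed here: if $I$ had an embedded prime containing $x_0$, the bijection would degenerate to a mere injection $\{\alpha\in M(s):\alpha_0\ge1\}\hookrightarrow M(s-1)$, which gives $\sigma_{I,0}(s)\le\sigma_{I,0}(s-1)+\HF_I(s-1)$ and hence $a_{I,0}\le 1/(d+1)$ --- the wrong direction, and the lemma would then fail. So the $x_0$-regularity is not a cosmetic hypothesis but the load-bearing one; you identify it correctly, but it would strengthen the write-up to flag that ``equidimensional'' must be taken in the unmixed sense. With that understanding, the recurrence $\sigma_{I,0}(s)=\sigma_{I,0}(s-1)+\HF_I(s-1)$ and the asymptotic $\sum_{j<s}\HF_I(j)\sim\delta s^{d+1}/(d+1)!$ versus $s\HP_I(s)\sim\delta s^{d+1}/d!$ are correct and close the proof.
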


\subsection{Proof of Theorem \ref{thm:main:uniform:countingFq} for $m=2$ and $d=1$}

Fix a positive integer $\delta$.
Clearly all irreducible curves in $\AA^2$ of degree $\delta$ form a definable family of definable sets, say, with parameter $z$ in a definable (and Zariski-constructible) set $Z$; write $X_z$ for the curve in $\AA^2$ corresponding to the parameter $z\in Z$.


Apply Theorem \ref{uniform-Tr-approx} to the definable family of the definable sets $X_z$. 
It gives some constant $C$ and, for some $M$, for all local fields $K $ in $\cB_{\ZZ,M}$ and all integers $r>0$ prime to $q_K$, a $T_r$-parametrization of $X_z(\cO_K)$ with $Cr$ many pieces.
Fix such a $K$ and a parameter $z\in Z(K)$ corresponding to an irreducible curve $X_z\subset \AA^2_K$ of degree $\delta$.

Consider the map
\[
\iota : \begin{cases} \AA^2_K\to \AA^3_K \\ (x,y)\mapsto (1,x,y) \end{cases}
\]
and the corresponding embedding
\[
\underline\iota :  \begin{cases} \AA^2_K \longhookrightarrow
\PP^2_K \\ (x, y) \mapsto [1: x: y]. \end{cases}
\]
Denote by $I_z$ the homogenous ideal associated to the closure of $\underline\iota(X_z)$.  


Fix some positive integer $s$, set
\[
M_z(s)=\set{\alpha\in \Lambda_{3}(s), x^\alpha\notin \LT(I_z)},
\]
$\mu=\# M_z(s)$ and $e=\mu(\mu-1)/2$.

Now consider the given $T_r$-parametrization of $X_z(\cO_K)$ with  $r=\mu$ and work on one of the $C\mu$ pieces $U_z\subseteq \cO_K$ with function $g_z : U_z \to X( \cO_K )$ satisfying $T_{\mu}$ on $U_z$.

Fix a closed ball $B_\beta\subseteq \cO_K$ of valuative radius $\beta$. Fix some points $y_1,\ldots, y_\mu$ in $(g(B_\beta\cap U))_n$ and consider the determinant
\[
\Delta=\det(\iota(y_i)^\alpha)_{1\leq i\leq \mu, \alpha \in M_z(s)}.
\]

Since the composition of functions satisfying $T_{\mu}$ also satisfies $T_{\mu}$, we can apply Lemma \ref{lem:det:estimate}, with $m=1$, $r=\mu$ to get that
\[
\ord_t \Delta \geq \beta e.
\]

On the other hand, since the points $y_i$ are of degree less than $n$ as polynomials in $t$ over $\FF_{q_K}$, we also have
\[
\deg \Delta \leq (n-1)(\sigma_1+\sigma_2),
\]
where $\sigma_1,\sigma_2$ are defined by Equation (\ref{eqn-def-sigma}).
Hence, if $\Delta$ is not zero, then
\[
\ord_t \Delta \leq (n-1)(\sigma_1+\sigma_2).
\]
It follows that $\Delta=0$ whenever
\begin{equation}
\label{inequality-det}
\beta e > (n-1)(\sigma_1+\sigma_2).
\end{equation}
When such an inequality holds, the matrix $A=(y_i^\alpha)$ is of rank less than $\mu$. Fix a minor of maximal rank $B=(y_i^\alpha)_{i\in I,\alpha \in J}$ and some $\alpha_0\in M_z(s)\backslash J$.
Then the polynomial
\[
f(x,y)=\det
\left(\begin{array}{c}
y_i^\alpha \\
(1,x,y)^\alpha
\end{array}\right)_{i\in I, \alpha \in J\cup \set{\alpha_0}}
\]
is of total degree at most $s$ and nonzero, since the coefficient of $(1,x,y)^{\alpha_0}$ is $\det(B)$. Moreover, it vanishes at all points in $g(B_\beta\cap U)_n$ but does not vanish on the whole $X_z$, since its exponents lie in $M_z(s)$ and $X_z$ is irreducible. Hence by B\'ezout's theorem, there are at most $s\delta$ points in  $(g(B_\beta\cap U))_n$.

We now show how to choose $s$ and $\beta$ in terms of $n$ such that inequality (\ref{inequality-det}) holds. Recall that $\mu=\# M_z(s)=\HF_{I_z}(s)$. By properties of Hilbert polynomials and equation (\ref{eqn-HPratio-DL}), we have
\begin{equation}
\label{eqn-mu-HP}
\mu={\delta} s+O(1)
\end{equation}
and
\[
\frac{\sigma_i}{\mu}=a_is+O(1).
\]
Combining those two equations, we get
\[
\sigma_i=a_i{\delta} s^2+ O(s)
\]
and
\[
e=\frac{{\delta}^2s^2}{2} +O(s),
\]
and finally, by applying Lemma \ref{lem-salberger},
\[
\frac{\sigma_1+\sigma_2}{e}\leq \frac{1}{{\delta}}+O(s^{-1}).
\]
Hence there is some $s_0$ and $C_0>0$ such that for every $s\geq s_0$,
\[
\frac{\sigma_1+\sigma_2}{e}\leq \frac{1}{{\delta}}+C_0s^{-1}.
\]
Recall that the coefficients of Hilbert polynomials can be bounded in terms of the degree of the curve and that the characteristic is assumed to be large.
Hence $s_0$ and $C_0$ depend only on the degree $\delta$ of the curve $X_z$.

If follows that for
\begin{equation}\label{eqn-s}
s=\ceil{\max\{s_0,2C_0(n-1)\}},
\end{equation}
 we have
\[
(n-1)\frac{\sigma_1+\sigma_2}{e}\leq \ceil{\frac{n}{{\delta}}}.
\]
We can thus set $\beta=\ceil{\frac{n}{{\delta}}}$ to satisfy inequality (\ref{inequality-det}).
It follows from the preceding discussion that there are at most $s\delta$ points in $g(B_\beta\cap U)_n$.  From Equation (\ref{eqn-mu-HP}), we have $\mu\leq {\delta}s+C_1$, for some constant $C_1$, and from (\ref{eqn-s}) that $s\leq C_2n$ for some constant $C_2$, with $C_i$ independent of $n$. Since we need $q^\beta$ closed balls of valuative radius $\beta$ to cover $\FF_q \llb t\rrb=\cO_K$, and that we have a $T_\mu$-parametrization of $X(\FF_q \llb t\rrb)$ involving $C\mu$ pieces, we find that (after enlarging $C$) there are at most
$$
Cn^2 q^{\ceil{\frac{n}{{\delta}}}}
$$ 
points in $X(\FF_q[t])_n$.
\qed

\begin{remark}
In their preprint \cite{bnd_2tor}, Bhargava et al. use Sedunova's result \cite{sedunova_BP} to bound the 2-torsion of class groups of function fields over finite fields, see their Theorem 7.1. One can use instead our Theorem \ref{thm:main:uniform:countingFq} in the special case of Theorem \ref{thmintro_points-curves} to obtain a uniform version of their result. We thank Peter Nelson for directing us to the reference \cite{bnd_2tor}.
\end{remark}

\section{Uniform non-Archimedean Pila-Wilkie counting theorem}
\label{section_PiWi}

In this section we provide uniform versions in the $p$-adic fields for large $p$ and also in the fields $\FF_q\llp t \rrp$ of large characteristic
of  several of the main counting results of \cite{CCL-PW} (on rational points on $p$-adic subanalytic sets). To achieve this we use the uniform parameterization result of Theorem \ref{uniform-Tr-approx}. Furthermore, Proposition \ref{prop-covering-bhp-hypersurfaces} is new in all senses, and is a (uniform) non-archimedean variant of recent results of \cite{CPW_UYGpara}, \cite{BN3}; it should be put in contrast with Proposition 4.1.3 of \cite{CCL-PW}.



\subsection{Hypersurface coverings}

We begin with fixing some terminology.

Consider the language $\cL = \cL_\mathrm{DP}^\mathrm{an}$ as described in Setting \ref{setting-T_DP}.  We will from now on only consider definable sets which are subsets of the Cartesian powers of the valued field sort (sometimes in a concrete $\cL$-structure, and sometimes for the theory $\cT$).

\begin{defn}
Let $K$ be an $\cL$-structure.
An $\cL(K)$-definable set $X\subset K^n$ is said to be of dimension $d$ at $x\in X$ if for every small enough box containing $x$, $X\cap B$ is of dimension $d$.
An $\cL(K)$-definable set $X\subset K^n$ is said to be of pure dimension $d$ if it is of dimension $d$ at all points $x$ in $X(K)$.

For an $\cL(K)$-definable set $X\subset K^n$, define the algebraic part $X^\alg$ of $X$ to be the union of all quantifier free $\cL_\mathrm{DP}(K)$-definable sets
of pure positive dimension and contained in $X$. Note that the set $X^\alg$ is in general neither semi-algebraic nor subanalytic.
 \end{defn}

By subanalytic we mean from now on $\cL$-definable, or $\cL(K)$-definable if we are in a fixed $\cL$-structure, and we speak about definable families in the sense explained just below \ref{sec:not}. Likewise, by semi-algebraic we mean definable set in the language $\cL_\mathrm{DP}$, or $\cL_\mathrm{DP}(K)$-definable if we are in a fixed structure (see section \ref{setting-T_DP}). Write $\cT$ for $\cT_\mathrm{DP}^\mathrm{an}$.

\begin{remark}
Observe that the definition of the algebraic part is insensitive to having or not having algebraic Skolem functions on the residue field. Indeed, its definition is local and allows parameters from the structure.
\end{remark}

If $x\in \ZZ$, set $H(x)=\abs{x}$, the absolute value of $x$. If $x=(x_1,\dots,x_n)\in \ZZ^n$, set $H(x)=\max_i\set{H(x_i)}$. If $L$ is a local field of characteristic zero, $B\geq 1$ and $X\subseteq L^n$, we set
\[
X(\ZZ,B)=\set{x\in X\cap \ZZ\mid H(x)\leq B}.
\]
 If $x\in \FF_q[t]$, we set
 $$
 H(x)=q^{\deg_t(x)},
 $$
with $\deg_t(x)$ the degree in $t$ of the polynomial $x$ over $\FF_q$. For $x=(x_1,\dots,x_n)\in (\FF_q[t])^n$, put $H(x)=\max_i\set{H(x_i)}$. We now set for $X\subseteq \FF_q\llb t \rrb$ and $B\geq 1$
 \[
 X(\FF_q[t],B)=\set{x\in X\cap \FF_q[t]\mid H(x)\leq B}.
 \]

Recall the notations at the beginning of Section \ref{sec-point-bnd-degree}. For every integers $d$, $n$, $m$, set $\mu=D_n(d)$ and let $r$ be the smallest integer such that $D_m(r-1)\leq\mu <D_m(r)$. Then set $V=\sum_{k=0}^d k L_n(k)$, $e=\sum_{k=1}^{r-1}kL_m(k)+r(\mu-D_m(r-1))$.

The following result refines Lemma 4.1.2 of \cite{CCL-PW} and has a similar proof.
\begin{lem}
\label{lem-Tr-param-hypersurfaces}
For every integers $d$, $n$, $m$ with $m<n$, consider the integers $r$, $V$, $e$ as defined above. Fix a local field $L$, a subanalytic subset $U\subseteq \cO_L^m$ and subanalytic functions $\psi=(\psi_1,\dots \psi_n) : U\to \cO_L^n$ that satisfy $T_r$-approximation and $H$. Then if $L$ is of characteristic zero,  the set $\psi(U)(\ZZ,H)$ is contained into at most
\[
q^m (\mu !)^{m/e} H^{mV/e}
\]
hypersurfaces of degree at most $d$. If $L$ is of positive characteristic, the set $\psi(U)(\FF_q[t],H)$ is contained into at most
\[
q^m H^{mV/e}
\]
hypersurfaces of degree at most $d$. Moreover, when $d$ goes to infinity, $mV/e$ goes to $0$.
\end{lem}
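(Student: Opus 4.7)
The plan is to adapt the Bombieri--Pila determinant method exactly as in the proof of Theorem \ref{thm:main:uniform:countingFq} for $m=2$, $d=1$, but now tracking the dependence on the covering degree $d$. First I would observe that for each multi-index $\alpha \in \Delta_n(d)$, the monomial function $\psi^\alpha := \prod_{k=1}^n \psi_k^{\alpha_k} \colon U \to \cO_L$ still satisfies $T_r$-approximation, since sums and products of $\cO_L$-valued $T_r$-functions are $T_r$. Enumerate $\Delta_n(d) = \{\alpha_1, \dots, \alpha_\mu\}$ with $\mu = D_n(d)$, and partition $\cO_L^m$ into closed balls of a common valuative radius $\beta$, to be fixed. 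On each such ball $B$ and any $\mu$ points $x_1, \dots, x_\mu \in B \cap U$, Lemma \ref{lem:det:estimate} applied to $\psi^{\alpha_1}, \dots, \psi^{\alpha_\mu}$ gives
\[
\ord(\Delta) \geq \beta e, \qquad \Delta := \det\bigl(\psi^{\alpha_i}(x_j)\bigr)_{1\leq i,j\leq \mu}.
\]

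Next I would specialize to rational or polynomial points in order to force $\Delta = 0$. In the characteristic-zero case, write $y_j := \psi(x_j)\in \ZZ^n$ with $|y_j|\leq H$; then $\Delta\in\ZZ$ has archimedean absolute value at most $\mu!\,H^V$, since each of the $\mu!$ signed terms in the Leibniz expansion is a product of monomials $y_j^{\alpha_i}$ of total multi-degree $V$. Combining this with the divisibility $p_L^{\lceil\beta e/e_L\rceil}\mid \Delta$ extracted from $\ord_L(\Delta)\geq \beta e$, and choosing $\beta$ to be the least integer with $q_L^{\beta e} > \mu!\,H^V$ (so $q_L^\beta$ of order $(\mu!)^{1/e} H^{V/e}$), forces $\Delta = 0$. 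In the positive-characteristic case, $\Delta\in\FF_{q_L}[t]$ has $t$-degree at most $V\log_{q_L} H$; using $\ord_t \Delta \leq \deg_t \Delta$ for nonzero polynomials, the least $\beta$ with $q_L^{\beta e} > H^V$, i.e.\ $q_L^\beta$ of order $H^{V/e}$, suffices.

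Then I would convert the vanishing of $\Delta$ into a hypersurface containment: if $Y$ denotes the set of integer (resp.\ polynomial) points of $\psi(B\cap U)$ of height at most $H$, then the matrix $(y^\alpha)_{y\in Y,\,\alpha\in\Delta_n(d)}$ has rank strictly less than $\mu$; expanding a maximal nonvanishing minor along one further column produces a nonzero polynomial of degree $\leq d$ that vanishes on $Y$, giving a single hypersurface of degree $\leq d$ covering all rational/polynomial points of $\psi(B\cap U)$. Summing over the $q_L^{m\beta}$ balls covering $\cO_L^m$ yields the claimed counts $q_L^m(\mu!)^{m/e}H^{mV/e}$ and $q_L^m H^{mV/e}$. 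For the \emph{moreover} clause, the standard estimates $\mu = D_n(d) \sim d^n/n!$ and $V \sim d^{n+1}/(n+1)!$, together with $r \sim (m!\,\mu)^{1/m}\sim d^{n/m}$ and $e \sim m\,r\,\mu/(m{+}1) \sim d^{n+n/m}$, give
\[
\frac{mV}{e} = O\!\bigl(d^{\,1-n/m}\bigr) \longrightarrow 0
\]
as $d\to\infty$, using $m<n$.

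The main obstacle will be careful bookkeeping in the characteristic-zero case: the determinant lemma supplies a valuation in $L$, but the archimedean bound lives in $\ZZ$, so the ramification index and residue degree of $L/\QQ_{p_L}$ enter the exponent conversion and must be absorbed into the prefactors $q_L^m$ and $(\mu!)^{m/e}$ (which allow multiplicative slack) or controlled uniformly for local fields of bounded absolute ramification, consistently with the uniformity in large residue characteristic throughout the paper.
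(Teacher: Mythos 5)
Your proposal follows essentially the same Bombieri--Pila determinant approach as the paper's proof: apply Lemma \ref{lem:det:estimate} to the monomial compositions $\psi^{\alpha}$ for $\alpha\in\Delta_n(d)$ over boxes of suitable valuative radius; force the determinant to vanish via the archimedean bound (resp.\ the $t$-degree bound); extract a covering hypersurface from a maximal nonvanishing minor; count boxes; and conclude with the same asymptotics for $\mu$, $V$, $r$, $e$ as the paper (which cites Pila for these estimates).

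Your closing paragraph correctly flags a genuine subtlety in the characteristic-zero case, but your first proposed fix does not work as stated: the factor $e(L/\QQ_{p_L})\cdot f(L/\QQ_{p_L})=[L:\QQ_{p_L}]$ arising in the conversion from the archimedean bound on $\Delta\in\ZZ$ to a bound on $\ord_L(\Delta)$ gives $\ord_L(\Delta)\leq [L:\QQ_{p_L}]\log_{q_L}(\mu!\,H^V)$, so it enters the \emph{exponent} of $H$ (yielding $H^{mV[L:\QQ_{p_L}]/e}$) and cannot be absorbed into the multiplicative prefactors $q_L^m$ or $(\mu!)^{m/e}$. Restricting to local fields with $[L:\QQ_{p_L}]$ bounded (your second suggestion) is the viable route for keeping the claimed exponent. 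For what it is worth, the paper's own proof of this lemma silently makes the same assumption: the displayed inequality $\ord(\Delta)\leq\log_q(\mu!\,H^V)$ is, as written, valid only when $L=\QQ_{p_L}$.
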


\begin{proof}
Troughout the proof, we use the notations introduced at the beginning of Section \ref{sec-point-bnd-degree}. Under the hypothesis of the lemma, fix a closed box $B\subseteq \cO_L^m$ of valuative radius $\alpha$. Then fix points $P_1,\dots P_\mu\in \psi(B\cap U)(\ZZ,H)$ (or $\psi(B\cap U)(\FF_q[t],H)$) and consider $x_i\in B\cap U$ such that $\psi(x_i)=P_i$. Consider the determinant $\Delta=\det((\psi(x_i)^j)_{1\leq i \leq \mu,j\in\Delta_n (d)}$.
Since $\psi$ satisfies $T_r$-approximation, it follows from Lemma \ref{lem:det:estimate} that $\ord (\Delta)\geq \alpha e$.

In the positive characteristic case, since the $P_i$ are in $\FF_q[t]$, of degree less or equal than $\log_q(H)$, if $\Delta\neq 0$, then $\ord(\Delta)\leq \log_q(H)V$. Hence if $\alpha> \log_q(H) V/e$, then $\Delta=0$.

In the characteristic zero case, since the $P_i$ are in $\ZZ$ of height at most $H$, it follows that $\Delta\in \ZZ$ is of (Archimedean) absolute value at most $\mu ! H^V$. If $\Delta\neq 0$, this implies that $\ord(\Delta)\leq \log_q (\mu ! H^V)$. Hence if $\alpha> \log_q(\mu!H^V)/e$, then $\Delta=0$.

We now assume that $\alpha$ is chosen such that $\Delta=0$.  As in the Bombieri-Pila case, by considering minors of maximal rank, we can produce a hypersurface $H$ of degree $d$ such that all the $P_i$ are contained in $H$. See the proof of Theorem \ref{thm:main:uniform:countingFq} for details.

Since we need $q^{m\alpha}$ boxes of radius $\alpha$ to cover $\cO_L^m$, in the characteristic zero case, we find that we can cover $\psi(U)(\ZZ,H)$ by $q^m{\mu!}^{m/e}H^{mV/e}$ hypersurfaces of degree $d$. In the positive characteristic case, we can cover $\psi(U)(\FF_q[t],H)$ by $q^mH^{mV/e}$ hypersurfaces of degree at most $d$.

By an explicit computation, see \cite[page 212]{pilapointsdilat} for details, we have $e\sim_d C_1(m,n)d^{n+n/m}$ and $V\sim_d C_2(m,n) d^{n+1}$, the equivalents being for $d\to +\infty$. Hence since $m<n$, $mV/e$ goes to zero when $d\to +\infty$.
\end{proof}

\begin{prop}
\label{prop-covering-bhp-hypersurfaces}
Let integers $m\geq 0$ and $n>m$ be given.
Let $X=(X_y)_{y\in Y}\subseteq (\VF^n)_{y\in Y}$ be an $\cL$-definable family of subanalytic sets with $X_y$ of dimension $m$ in each model $K$ of $\cT$ and each $y$ in $Y(K)$. Then there are a constant $C(X)$ depending only on $X$, a constant $C'(n,m)$ depending only on $n$ and $m$, and an integer $N=N(X)$ such that for each $H\geq 2$ and each local field $L\in \cC_{\cO,N}$, the following holds.

For $y\in Y(L)$ and $H\geq 2$, the set $X_y(L)(\ZZ,H)$ (resp.~$X_y(L)(\FF_{q_L}[t],H)$ for the positive characteristic case) is covered by at most
\[
C(X)q_L^m\log(H)^\alpha
\] hypersurfaces of degree at most $C'(n,m)\log(H)^{m/(n-m)}$.

Moreover, we have $\alpha=\frac{nm}{(m-1)(n-m)}$ if $m>1$ and $\alpha=\frac{n}{n-1}$ if $m=1$.
\end{prop}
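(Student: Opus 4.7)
The plan is to combine the uniform $T_r$-parametrization of Theorem \ref{uniform-Tr-approx} with the hypersurface covering bound of Lemma \ref{lem-Tr-param-hypersurfaces}, and then to optimize the approximation order $r$ and the target hypersurface degree $d$ as functions of $H$.

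First, I would apply Theorem \ref{uniform-Tr-approx} to the definable family $X = (X_y)_{y \in Y}$. This yields constants $c = c(X)$ and $N_0 = N_0(X)$ such that for every local field $L \in \cC_{\cO, N_0}$, every $y \in Y(L)$ and every positive integer $r$, the set $X_y(L)$ admits a $T_r$-parametrization by at most $c r^m$ maps $g_{y,i} : P_{y,i} \subseteq \cO_L^m \to X_y(L)$. On each such piece, Lemma \ref{lem-Tr-param-hypersurfaces} bounds the number of degree-$\leq d$ hypersurfaces needed to cover $g_{y,i}(P_{y,i})(\ZZ, H)$ (resp.\ its positive-characteristic analogue) by $q_L^m (\mu!)^{m/e} H^{mV/e}$ (resp.\ $q_L^m H^{mV/e}$), where $\mu, V, e$ are the combinatorial quantities attached to $(d, n, m)$ in that lemma. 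Summing over the $c r^m$ pieces and taking the union of the hypersurfaces gives a global cover of $X_y(L)(\ZZ, H)$ by at most $c r^m \cdot q_L^m (\mu!)^{m/e} H^{mV/e}$ hypersurfaces of degree at most $d$.

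Next, I would set $d = \lceil C'(n,m) \log(H)^{m/(n-m)} \rceil$ with $C'(n,m)$ sufficiently large. This forces $r$ to be of order $d^{n/m}$ via the defining inequality $D_m(r-1) \leq D_n(d) < D_m(r)$. Using the asymptotics $V \sim c_V d^{n+1}$ and $e \sim c_e d^{n+n/m}$, so that $mV/e \sim c\, d^{-(n-m)/m}$, the factor $H^{mV/e}$ becomes bounded in terms of $n, m, C'$; the factor $(\mu!)^{m/e}$ in the characteristic zero case is handled similarly using $\log(\mu!) = O(\mu \log \mu) = O(d^n \log d)$ against $e = \Theta(d^{n+n/m})$. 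The degree of the covering hypersurfaces is thus at most $C'(n,m) \log(H)^{m/(n-m)}$, and the number of hypersurfaces is $O(q_L^m r^m) = O(q_L^m \log(H)^{mn/(n-m)})$.

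The main obstacle is extracting the sharp exponent $\alpha = nm/((m-1)(n-m))$ of the statement for $m > 1$ rather than the naive $nm/(n-m)$ produced by the above optimization: the two values already coincide when $m = 1$ or $m = 2$, so the direct argument gives the statement in those cases, but for $m \geq 3$ an additional factor $1/(m-1)$ must be saved. I would look for this saving either by sharpening Lemma \ref{lem-Tr-param-hypersurfaces} through a Salberger-type selection of monomials of degree $\leq d$ adapted to the local geometry of $X$, thereby improving the effective ratio $V/e$; or by iterating the parametrize-and-cover procedure, intersecting $X_y$ with the hypersurfaces produced at a preliminary scale to obtain pieces of dimension $m-1$ and running a downward induction on $m$ that converts the inductive gain into the desired factor $1/(m-1)$.
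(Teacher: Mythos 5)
Your plan is identical to the paper's: apply Theorem~\ref{uniform-Tr-approx} to obtain the uniform $T_r$-parametrization, apply Lemma~\ref{lem-Tr-param-hypersurfaces} on each of the $c\,r^m$ pieces, choose $d$ of order $\log(H)^{m/(n-m)}$ so that $H^{mV/e}$ (and, in characteristic zero, $(\mu!)^{m/e}$ via Stirling) stays bounded, and sum over the pieces. Your bookkeeping is the correct one: since $D_m(r-1)\leq\mu<D_m(r)$ and $D_m(r)\sim r^m/m!$, one gets $r=\Theta(\mu^{1/m})$, hence $r^m=\Theta(\mu)=\Theta(d^n)$, and the total count is $O(q_L^m \log(H)^{mn/(n-m)})$, i.e.\ $\alpha=mn/(n-m)$.

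What you could not see blind is that the discrepancy you flag for $m\geq3$ is a slip in the paper's proof, not a gap in yours. The paper writes ``we find a $T_r$-parametrization of $X$ involving $C(X)r$ pieces'', whereas Theorem~\ref{uniform-Tr-approx}, the result being invoked, produces $C(X)r^m$ pieces; its proof covers $\cO_L^m$ by the $r^m$ sets $p_{r,j}(D_{y,i,j})$ with $j$ ranging over $\{0,\dots,r-1\}^m$, so $r^m$ is the true count. The paper also records only the loose one-sided bound $r=O_H(\mu^{1/(m-1)})$ rather than the sharp $r=\Theta(\mu^{1/m})$. Multiplying $C(X)r$ (instead of $C(X)r^m$) by $r=O(\mu^{1/(m-1)})$ is precisely what yields the printed $\alpha=nm/((m-1)(n-m))$; carried out with the correct piece count and the sharp estimate for $r$, the paper's own argument gives your $\alpha=mn/(n-m)$, which agrees with the printed value for $m\in\{1,2\}$ and is larger for $m\geq3$. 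Neither the Salberger-type monomial refinement nor the downward-induction scheme you sketch is pursued in the paper (the opening ``we work inductively on $m$'' is used only for the $m=0$ base case), so you should not feel obliged to chase the smaller exponent: it is not supported by the argument as written, and the exact value of $\alpha$ is immaterial for the downstream Theorem~\ref{thm:unifCCL-PW}, which only needs some fixed power of $q_L$.
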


\begin{proof}
We work inductively on $m$. The case $m=0$ being clear, since the cardinal of the fibers is then uniformly bounded in $y$.  Assume now  that $1\leq m$. Apply the parametrization theorem \ref{uniform-Tr-approx} to the definable family $X$.

We keep notations from the proof of Lemma \ref{lem-Tr-param-hypersurfaces}. Choose $d$ in function of $H$ such that $H^{mV/e}$ is bounded (say by 2). From the computations at the end of the proof of Lemma \ref{lem-Tr-param-hypersurfaces}, we can choose $d\sim_HC'(m,n) \log(H)^{\frac{m}{n-m}}$

We have $\mu\sim_H C_3(n,m)d^{n}$, and since $r$ is the smallest integer such that $D_m(r-1)\leq \mu <D_m(r)$, we have if $m>1$, $r=O_H(\mu^{1/(m-1)})$ and if $m=1$, $r=\mu$.
From Theorem \ref{uniform-Tr-approx}, we find a $T_r$-parametrization of $X$ involving $C(X)r$ pieces. From Lemma \ref{lem-Tr-param-hypersurfaces}, the points of height at most $H$ on one of the pieces are included in at most $q_L^m (\mu !)^{m/e} H^{mV/e}$ (if $L\in \cA_\cO$) or $q_L^m H^{mV/e}$ (if $L\in \cB_\cO$) hypersurfaces of degree at most $d$. From the Stirling formula, we see that $(\mu !)^{m/e}$ is bounded. Hence overall, up to enlarging $C(X)$, we find that $X_y(L)(\ZZ,H)$ or $X_y(L)(\FF_{q_L}[t],H)$ is contained in
\[
C(X)q_L^m\log(H)^{\alpha}
\]
hypersurfaces of degree at most $C'(n,m)\log(H)^{m/(n-m)}$, with $\alpha=\frac{nm}{(m-1)(n-m)}$ if $m>1$ and $\alpha=\frac{n}{n-1}$ if $m=1$.
\end{proof}

%

\subsection{Blocks}

In this final section, we provide uniform
versions of
results of \cite[Section 4.2]{CCL-PW} for local fields of large residue characteristic, in particular of Theorems 4.2.3 and 4.2.4 of \cite{CCL-PW}. We thus obtain analogues of  Pila-Wilkie counting results, uniformly for local fields of large enough positive characteristic. We will leave proofs, which are analogous to the ones for Theorems 4.2.3 and 4.2.4 of \cite{CCL-PW}, to the reader.

\begin{defn}
A subset $W\subset K^m$, with $K$ an $\cL$-structure, is called a block if it is either a singleton, or, it is a smooth subanalytic set of pure dimension $d>0$ contained in a smooth semi-algebraic set of pure dimension $d$.

A family of blocks $W\subseteq \VF^{m+s}$, with parameters running over $\VF^s$,  is a subanalytic set $W$ such that there exists an integer $s'\geq 0$ and a semi-algebraic set $W'\subseteq \VF^{m+s'}$ such that for each model $K$ of $\cT$, for each $y\in K^s$ there is an $y'\in K^{s'}$ such that both $W_y(K)$ and $W'_{y'}(K)$ are smooth of the same pure dimension and such that $W_y(K)\subseteq W'_{y'}(K)$.
\end{defn}

Note that if $W$ is a block of positive dimension, then $W=W^\alg$. 

Note that our notion of family of blocks, which corresponds to the one in \cite{CLL_NA_AxLind}, is a strengthening of the one in \cite{CCL-PW}  which solely ask that a family of blocks $W$ is such that $W_y$ is a block for each $y\in Y$. However, all the results in Section 4.2 of \cite{CCL-PW} hold with this strengthened definition.

Let $L$ be in $\cA_{\cO}$ and let $k>0$ be an integer. We define the $k$-height of $x\in L$ as
\[
H_k(x)=\min_a\set{H(a)\mid a=(a_1,\dots,a_k)\in \ZZ^k, \sum_{i=0}^k a_i x^i=0, a\not=0}
\]
and for $x=(x_1,\dots,x_n)\in L^n, H_k(x)=\max_i\set{H(x_i)}$.

Let $L\in \cB_{\cO}$ and $k>0$ be an integer. We define the $k$-height of $x\in L$ as
\[
H_k(x)=\min_a\set{H(a)\mid a=(a_1,\dots,a_k)\in \FF_{q_L}[t]^k, \sum_{i=0}^k a_i x^i=0, a\not=0}
\]
and for $x=(x_1,\dots,x_n)\in L^n, H_k(x)=\max_i\set{H(x_i)}$.

If $X\subseteq L^n$, we set
$$
X(k,H)=\set{x\in X\mid H_k(x)\leq H}.
$$

The following result is a generalized and  uniform version of Theorems 4.2.3 and 4.2.4 of \cite{CCL-PW}.

\begin{thm}\label{thm:unifCCL-PW}
Let $X=(X_y)_{y\in Y}\subseteq (K^n)_{y\in Y}$ be a subanalytic family of subanalytic sets of dimension $m<n$ in each model of $\cT$. Fix $\varepsilon>0$. Then there are a positive constant $C(X,k,\varepsilon)$, integers $l=l(X,k,\varepsilon)$, $N=N(X,k,\varepsilon)$, $\alpha=\alpha(m,n,k)$, and a family of blocks $W=(W_{(y,s})_{(y,s)\in Y\times K^l}\subseteq K^n\times Y\times K^l$  such that the following holds.

For each $L\in \cC_{\cO, N}$, $H\geq 1$ and $y\in Y(L)$, there is a subset $S=S(X,k,L,H,y)\subseteq K^s$ of cardinal at most $C(X,\varepsilon)q^\alpha H^\varepsilon$ such that
\[
X_y(L)(k,H)\subseteq \bigcup_{s\in S} W_{y,s}.
\]
In particular, if we denote by $W_y^\varepsilon$ the union over $s\in S$ of the $W_{y,s}(L)$ of positive dimension, we have $W_y^\varepsilon\subseteq X_y(L)^\alg$ and
\[
\# (X_y(L)\backslash W_y^\varepsilon)(k,H)\leq C(X,\varepsilon)q^\alpha H^\varepsilon.
\]
\end{thm}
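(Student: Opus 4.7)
The plan is to follow the strategy of \cite[Section 4.2]{CCL-PW}, substituting their non-uniform hypersurface covering result by our uniform Proposition \ref{prop-covering-bhp-hypersurfaces}, and tracking the $q^\alpha$ factor through the induction. The case of general $k$ reduces to $k=1$ (where $H_1$ is the ordinary rational, resp.~$\FF_{q_L}(t)$-rational, height) by the standard trick introducing an auxiliary subanalytic family encoding algebraic points of degree $\leq k$ via their minimal polynomials, exactly as in \cite[Section 4.2]{CCL-PW}; I concentrate below on $k=1$.

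I proceed by induction on $m=\dim X$. The case $m=0$ is immediate from uniform boundedness of fibres in a tame family. For $m\geq 1$, apply Proposition \ref{prop-covering-bhp-hypersurfaces} to obtain, for every $L\in \cC_{\cO,N}$, $y\in Y(L)$ and $H\geq 2$, a covering of $X_y(L)(\ZZ,H)$ (resp.~$X_y(L)(\FF_{q_L}[t],H)$) by at most $C(X)q_L^m(\log H)^{\beta}$ algebraic hypersurfaces of degree at most $d = C'(n,m)(\log H)^{m/(n-m)}$. For each such hypersurface $\Sigma$, by tameness of $\cT$ the intersection $X_y\cap \Sigma$ decomposes into finitely many semi-algebraic irreducible components; the components of maximal dimension $m$ are contained in $X_y^{\alg}$, and I retain their smooth loci as positive-dimensional blocks. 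The remaining components have dimension strictly less than $m$ and can be collected into a subanalytic sub-family of $X$ to which the induction hypothesis applies, producing further blocks. The algebraic Skolem functions in the residue field provided by Lemma \ref{lem-skolemization} ensure that the extraction of irreducible components, their smooth loci, and all auxiliary data is performed definably, uniformly in $L$, $y$ and $H$.

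Assembling these blocks into one definable family $W\subseteq \VF^n\times Y\times \VF^l$ with a \emph{fixed} $l$ is the main obstacle. Although the degrees of the covering hypersurfaces grow polylogarithmically in $H$, the blocks retained depend only on the smooth loci of the irreducible components of $X_y\cap \Sigma$, and by the tameness of $\cT$ together with the requirement (built into the definition of a family of blocks) that each block be contained in an ambient semi-algebraic set of pure dimension $m$, these smooth loci can be parameterized by a fixed-dimensional definable family, independent of $H$. This is the point where the finer tameness properties of $\cT$ are used, in a manner parallel to the treatment in \cite[Section 4.2]{CCL-PW}. Finally, tracking the bound through the induction: at each of the at most $m$ steps one accumulates a multiplicative factor of order $q_L^m(\log H)^{\beta}$, so the total number of parameters $\#S$ is bounded by some $C(X,\varepsilon)q_L^{\alpha(m,n,k)}(\log H)^{\gamma}$; since $(\log H)^{\gamma}\leq C'_\varepsilon H^\varepsilon$ for $H\geq 2$, enlarging $C(X,\varepsilon)$ absorbs this polylogarithmic factor into the required $C(X,\varepsilon)q^\alpha H^\varepsilon$, yielding the claimed bound and the decomposition $X_y(L)(k,H) = W_y^{\varepsilon}(L)\cup\{\text{points}\}$ with the stated cardinality estimate on the complement.
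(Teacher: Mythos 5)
Your proposal takes the same route as the paper, which itself gives no more detail than \emph{follow the proofs of Proposition~4.2.2 and Theorems~4.2.3--4.2.4 of \cite{CCL-PW}, using Proposition~\ref{prop-covering-bhp-hypersurfaces} in place of the covering result there}, including the reduction of $k$-heights to $k=1$ via minimal polynomials and the tracking of the $q^\alpha$ factor through the induction. One caution on an intermediate step: the claim that $X_y\cap\Sigma$ decomposes into finitely many \emph{semi-algebraic} irreducible components whose top-dimensional pieces lie in $X_y^{\alg}$ is not correct as stated --- the intersection is subanalytic, not semi-algebraic, and when $m<n-1$ the hypersurface $\Sigma$ has dimension $n-1\neq m$, so it cannot directly serve as the ambient semi-algebraic set of a block --- but you rightly identify the construction of a fixed block family independent of $H$ as the real obstacle and, like the paper, defer that to the block-family machinery of \cite[Proposition~4.2.2]{CCL-PW}, so the overall strategy agrees with the paper's.
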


The proof of Theorem \ref{thm:unifCCL-PW} is completely similar to those of \cite[Section 4.2]{CCL-PW} (namely to the proofs of Proposition 4.2.2 and Theorems 4.2.3 and 4.2.4), where instead of using \cite[Proposition 4.2]{CCL-PW}, one uses Proposition \ref{prop-covering-bhp-hypersurfaces}. We skip the proofs and refer to \cite{CCL-PW} for details.

\begin{remark}
Note also that the bound in Proposition \ref{prop-covering-bhp-hypersurfaces} is polylogarithmic, whereas the bound of \cite[Proposition 4.2]{CCL-PW} is subpolynomial. However, this improvement does not guarantee a polylogarithmic bound in the counting theorems. As in the o-minimal case, such a bound is not expected to hold in general, but might be true in some specific situations, similar to the context of Wilkie's conjecture for $\RR^{\rm exp}$-definable sets.
\end{remark}

\bibliographystyle{abbrv}
\bibliography{bibliography_YGP}

\end{document}